\newtheorem{theorem}{Theorem}
\newtheorem{proposition}[theorem]{Proposition}
\newtheorem{lemma}[theorem]{Lemma}
\theoremstyle{definition}
\newtheorem{remark}[theorem]{Remark}
\numberwithin{equation}{section}
\numberwithin{theorem}{section}
\def\ep{\varepsilon}
\def\vp{\varphi}
\def\vvp{\varPhi}
\def\ttheta{\varTheta}
\def\R{\mathbb{R}}
\def\cale{\mathcal{E}}
\def\calb{\mathcal{B}}
\def\holh{\calb^{1+\sigma/2,1+\sigma}_T}
\def\holp{\calb^{1+\sigma/2,2+\sigma}_T}
\def\rt{\tilde{\rho}}
\def\ut{\tilde{u}}
\def\ttt{\tilde{\theta}}
\def\pt{\tilde{p}}
\def\zt{\tilde{z}}
\def\rh{\hat{\rho}}
\def\uh{\hat{u}}
\def\tth{\hat{\theta}}
\def\xh{\hat{x}}
\def\th{\hat{t}}
\def\vph{\hat{\vp}}
\def\psih{\hat{\psi}}
\def\chih{\hat{\chi}}
\def\vvph{\hat{\vvp}}
\def\ubar{\bar{u}}
\def\tbar{\bar{\theta}}
\def\fbar{\bar{f}}
\def\gbar{\bar{g}}
\def\mnp{\mathcal{M}^+}
\def\mnz{\mathcal{M}^0}
\def\mnn{\mathcal{M}^-}
\def\dels{\delta}
\def\ub{u_{\text{\rm b}}}
\def\tb{\theta_{\text{\rm b}}}
\def\uub{U_{\text{\rm b}}}
\def\ttb{\ttheta_{\text{\rm b}}}
\def\cv{c_{\text{\rm v}}}
\def\cp{c_{\text{\rm p}}}
\def\pr{P_{\text{\rm r}}}
\def\ubh{\hat{u}_{\text{\rm b}}}
\def\tbh{\hat{\theta}_{\text{\rm b}}}
\def\cvh{\hat{c}_{\text{\rm v}}}
\def\hsm{h^{\text{\rm s}}}
\def\hcm{h^{\text{\rm c}}}
\def\thsm{\tilde{h}^{\text{\rm s}}}
\def\thcm{\tilde{h}^{\text{\rm c}}}
\def\pd{\partial}
\def\tr{\mathop{\rm Tr}}
\def\diag{\mathop{\rm diag}}
\def\trp{\intercal}
\def\lp#1#2{\| #2 \|_{L^{#1}}}
\def\lt#1{\|#1\|}
\def\li#1{\lp{\infty}{#1}}
\def\lpa#1#2#3{\| #3 \|_{L^{#1}_{#2}}}
\def\ltaa#1#2{\lpa{2}{#1}{#2}}
\def\lta#1#2{| #2 |_{#1}}
\def\hs#1#2{\| #2 \|_{H^{#1}}}
\def\hsa#1#2#3{\| #3 \|_{H^{#1}_{#2}}}
\def\ho#1{\hs{1}{#1}}
\def\hoa#1#2{\hsa{1}{#1}{#2}}
\def\ltat#1#2{[ \mspace{1.mu} #2 \mspace{1.mu} ]_{#1}}
\def\hsat#1#2#3{|\mspace{-1mu}[\mspace{1mu} #3 \mspace{1mu}]\mspace{-1mu}|_{{#1},{#2}}}
\def\hoat#1#2{\hsat{1}{#1}{#2}}
\def\holx#1#2{\| #2 \|_{\calb^{#1}}}
\def\holtx#1#2#3{\| #3 \|_{\calb_T^{#1,#2}}}
\begin{document}

%% difference between paragraph and equation.
%%
\abovedisplayskip=8pt plus 2pt minus 4pt
\belowdisplayskip=\abovedisplayskip

%%%%%%%%%%%%%%%%%%%%%%%%%%%%%%%%%%%%%%%%%%%%%%%%%%%%%%%%%%%%%%%%%%%%%%
% title, abstract, and etc. 

\thispagestyle{plain}

\title{%
\bf\Large%
\vspace{-8mm}
Stationary waves
 to  viscous heat-conductive 
 gases \\
 in  half space:\\
existence, stability and convergence rate%
\footnote{%
The second author's work was supported in part by
Grant-in-Aid for Young Scientists (B) 21740100 of 
the Ministry of Education, Culture, Sports, Science and Technology.
The fourth author's work was supported in part by
JSPS postdoctoral fellowship under P99217.
%Japan Society for the Promotion of Science.
}
}

\markboth%
{Stationary wave to viscous heat-conductive gases}%
{Stationary wave to viscous heat-conductive gases}

\author{%
{\large\sc Shuichi Kawashima}${}^1$,
{\large\sc Tohru Nakamura}${}^1$,
\\
{\large\sc Shinya Nishibata}${}^2$
{\normalsize and}
{\large\sc Peicheng Zhu}${}^3$
}

\date{%
\normalsize
${}^1$%
Faculty of Mathematics, Kyushu University
\\
Fukuoka 812-8581, Japan
\\ [7pt]
${}^2$%
Department of Mathematical and Computing Sciences,
\\
Tokyo Institute of Technology
\\
Tokyo 152-8552, Japan
\\ [7pt]
${}^3$%
Basque Center for Applied Mathematics (BCAM)
\\
Building 500, Bizkaia Technology Park,
E-48160 Derio, Spain
\\
IKERBASQUE, Basque Foundation for Science
\\
E-48011 Bilbao, Spain
\vspace{-5mm}
}

\maketitle

% Abstract with mathematical expressions
%
%\begin{abstract}
%\end{abstract}

% Abstract without mathematical expressions
%
\begin{abstract}
The main concern of the present paper is to study large-time
behavior of solutions to an ideal polytropic model of
compressible viscous gases in one-dimensional half space.
We consider an outflow problem, where the gas blows out
through the boundary, 
and  obtain a convergence rate of solutions toward
 a corresponding stationary solution.
Here the existence of the stationary solution is proved under 
a smallness condition on the boundary data
 with the aid of center manifold theory.
We also show  the time asymptotic stability of the stationary solution
under smallness assumptions on the boundary data and
the initial perturbation in the Sobolev space, by employing an energy method.
Moreover, the convergence rate of the solution toward the 
stationary solution is obtained, provided  that the initial perturbation 
belongs to the  weighted Sobolev space.
Precisely,  the convergence rate we obtain coincides
 with the spatial decay rate of the 
initial perturbation.
The proof is mainly based on  {\it a priori} estimates 
of the perturbation from the stationary solution, which
are derived
 by  a time and space weighted energy method.
\end{abstract}

\begin{description}

\item[{\it Keywords:}]
Compressible Navier--Stokes equation;
Eulerian coordinate;
ideal polytropic model;
outflow problem;
boundary layer solution;
weighted energy method.

\item[{\it 2000 Mathematics Subject Classification:}]
35B35;  % Stability, 
35B40;  % Asymptotic behavior of solutions
76N15.  % Gas dynamics, general 

\end{description}

% end of title 
%%%%%%%%%%%%%%%%%%%%%%%%%%%%%%%%%%%%%%%%%%%%%%%%%%%%%%%%%%%%%%%%%%%%%%

\clearpage

\section{Introduction and main result}
\label{intro}

\subsection{Formulation of the problem}
\label{intro-problem}

We study large-time behavior of a solution
to an initial boundary value problem for the 
 compressible Navier--Stokes equations
 over one-dimensional
half space $\R_+ := (0,\infty)$.
An ideal polytropic  model of  compressible viscous fluid
is formulated in the Eulerian coordinates as
\begin{subequations}
\label{nse}
\begin{gather}
\rho_t + (\rho u)_x = 0,
\label{eq1}
\\
(\rho u)_t + (\rho u^2 + p(\rho,\theta))_x = \mu u_{xx},
\label{eq2}
\\
\Bigl\{
\rho \Bigl(
\cv \theta + \frac{u^2}{2}
\Bigr)
\Bigr\}_t
+ \Bigl\{
\rho u \Bigl(
\cv \theta + \frac{u^2}{2}
\Bigr)
+ p(\rho,\theta) u
\Bigr\}_x
=
(\mu u u_x + \kappa \theta_x)_x,
\label{eq3}
\end{gather}
\end{subequations}
where unknown functions are $\rho = \rho(t,x)$, $u = u(t,x)$
and $\theta = \theta(t,x)$ standing for a mass density,
 a fluid velocity and an absolute temperature, respectively.
Due to the Boyle--Charles law, a pressure $p$ is 
explicitly given by a function of the density and the absolute
temperature:
\[
p = p(\rho,\theta) 
:= R \rho \theta,
\]
where $R>0$ is a gas constant.
Positive constants $\cv$, $\mu$ and $\kappa$ mean
a specific heat at constant volume, 
a viscosity coefficient
and a thermal conductivity, respectively.
Due to Mayler's relation for the ideal gas,
the specific heat 
 $\cv$ is expressed by
 the gas constant $R$ 
and
an adiabatic constant $\gamma > 1$ as
\[
\cv 
=
\frac{R}{\gamma - 1}.
\]
We also introduce physical constants
\[
\cp
:=
\gamma \cv
= \frac{\gamma}{\gamma-1} R,
\quad
\pr
:=
\frac{\mu}{\kappa} \cp
=
\frac{\mu}{\kappa} \frac{\gamma}{\gamma-1} R,
\]
which stand for a specific heat at constant pressure
and the Prandtl number, respectively.
The Prandtl number plays an important role in analysis
of a property of a stationary solution.

We put an initial condition
\begin{equation}
(\rho,u,\theta)(0,x)
=
(\rho_0,u_0,\theta_0)(x)
\label{ic}
\end{equation}
and boundary conditions
\begin{equation}
u(t,0) = \ub < 0,
\quad
\theta(t,0) = \tb > 0,
\label{bc}
\end{equation}
where $\ub$ and $\tb$ are constants.
It is assumed that the initial data converges to a
constant as $x$ tends to infinity:
\begin{equation*}
\lim_{x \to \infty}
(\rho_0, u_0, \theta_0)(x)
=
(\rho_+, u_+, \theta_+).
\end{equation*}
Moreover, we assume that the initial density and 
absolute temperature are uniformly positive, that is,
\begin{equation*}
\inf_{x \in \R_+} \rho_0(x) > 0,
\quad
\inf_{x \in \R_+} \theta_0(x) > 0,
\quad
\rho_+ > 0,
\quad
\theta_+ > 0.
\end{equation*}
The boundary condition for $u$ in (\ref{bc}) means that
the fluid blows out from the boundary.
Hence
this problem is called an outflow problem (see \cite{matu-01}).
Due to the outflow boundary condition,
the characteristic of the hyperbolic equation (\ref{eq1}) for 
the density $\rho$
is negative around the boundary
so that two boundary conditions are necessary and
sufficient for the wellposedness of this problem.

In the paper \cite{knz03},
Kawashima, Nishibata and Zhu  considered the outflow problem
for an isentropic model and obtained a necessary and sufficient
condition for the existence of the stationary solution.
Moreover, they proved the asymptotic stability of the stationary
solution under the smallness assumption on the initial
perturbation and the strength of the boundary data.
A convergence rate toward the stationary solution for this model was
obtained by Nakamura, Nishibata and Yuge in \cite{nny07}
under the assumption that the initial perturbation belongs to
the  suitably weighted Sobolev space.
The main concern of the present paper is to
extend these results to the model of heat-conductive viscous gas.
Precisely, we  show the existence
and the asymptotic stability of the stationary solution
as well as the convergence rate for the ideal polytropic
model (\ref{nse}).
Compared to the isentropic model, 
the heat-conductive model is more difficult
to handle.
For example,
since  the model (\ref{nse}) has two parabolic equations,
the equations for
the stationary wave are deduced to a $2 \times 2$ system of
autonomous ordinary differential equations.
However, it becomes a scalar equation in the case of
the isentropic flow.
Therefore, to obtain a condition which guarantees the
existence of the stationary solution
for the heat-conductive model,
we have to examine dynamics around an equilibrium of
the system by using center manifold theory.

\subsection{Dimensionless form}
\label{intro-dim}

For the stability analysis on the equations (\ref{nse}),
it is convenient to reformulate the problem into that in the 
dimensionless form.
For this purpose, we define new variables $\xh$ and $\th$ by
\[
\xh := \frac{x}{L},
%x = L \xh,
\quad
\th := \frac{t}{T},
%t = T \th,
\]
where $L$ and $T$ are positive constants.
% which will be given later.
We also employ new unknown functions $(\rh, \uh, \tth)$ defined by
\begin{equation}
\rh (\th, \xh)
:= \frac{1}{\rho_+} \rho(t, x),
\quad
\uh (\th, \xh)
:= \frac{1}{|u_+|} u(t,x),
\quad
\tth (\th, \xh)
:= \frac{1}{\theta_+} \theta(t,x).
\label{dml}
\end{equation}
Here we note that the constant $u_+$ must satisfy
\begin{equation}
u_+ < 0
\label{ngu}
\end{equation}
for the existence of the stationary solution.
Indeed, the stationary solution $(\rt,\ut,\ttt)(x)$ satisfies
\begin{equation}
\rt(x) \ut(x) = \rho_+ u_+,
\label{mast}
\end{equation}
which is obtained by integrating $(\rt \ut)_x = 0$ over $(x,\infty)$.
Substituting $x=0$ in (\ref{mast}),
we get $u_+ = \rt(0) \ub / \rho_+$,
which immediately yields (\ref{ngu}) by using the positivity of the
density and 
the boundary condition 
$\ub < 0$.
Next we define dimensionless physical constants by
\begin{equation}
\hat{\mu} := \frac{\mu}{\rho_+ |u_+|^2},
\quad
\hat{\kappa} := \frac{\kappa \theta_+}{\rho_+ |u_+|^4},
\quad
\cvh := \frac{1}{\gamma (\gamma-1)}
\label{dim-phy}
\end{equation}
and a dimensionless pressure by
\[
\hat{p} = \hat{p} (\rh, \tth) := \frac{1}{\gamma} \rh \tth.
\]
We also introduce Mach number $M_+$ at the spatial
asymptotic state:
\[
M_+ :=
\frac{|u_+|}{c_+},
\]
where $c_+ := \sqrt{R \gamma \theta_+}$ is sound speed.
Using the dimensionless constants (\ref{dim-phy}),
we  represent the Prandtl number $\pr$ as
\[
\pr
=
\frac{\hat{\mu}}{\hat{\kappa}} \frac{1}{M_+^2 (\gamma-1)}.
\]
Substituting (\ref{dml}) in (\ref{nse})
and letting $L = |u_+|$ and $T = 1$,
we have the equations for $(\rh,\uh,\tth)$ in the dimensionless form
as
\begin{subequations}
\label{d-nse}
\begin{gather}
\rho_t + (\rho u)_x = 0,
\label{d-eq1}
\\
(\rho u)_t + \Bigl(
\rho u^2 + \frac{1}{M_+^2} p(\rho,\theta)
\Bigr)_x = \mu u_{xx},
\label{d-eq2}
\\
\Bigl\{
\rho \Bigl(
\frac{1}{M_+^2}
\cv \theta + \frac{u^2}{2}
\Bigr)
\Bigr\}_t
+ \Bigl\{
\rho u \Bigl(
\frac{1}{M_+^2}
\cv \theta + \frac{u^2}{2}
\Bigr)
+ \frac{1}{M_+^2}
p(\rho,\theta) u
\Bigr\}_x
=
(\mu u u_x + \kappa \theta_x)_x.
\label{d-eq3}
\end{gather}
\end{subequations}
%Here and hereafter, for simplicity,
In the equations (\ref{d-nse}), without any confusion,
we abbreviate the symbol `` $\hat{}$ '' to
express dimensionless quantities.
The initial and the boundary conditions for the
dimensionless function $(\rho,u,\theta)$ are
prescribed as
\begin{subequations}
\label{d-ic}
\begin{gather}
(\rho,u,\theta)(0,x)
=
(\rh_0, \uh_0, \tth_0)(x)
:=
\Bigl(
\frac{\rho_0}{\rho_+}, \frac{u_0}{|u_+|}, \frac{\theta_0}{\theta_+}
\Bigr)(x),
\\
\lim_{x \to \infty} (\rh_0, \uh_0, \tth_0)(x)
= (1, -1, 1),
\label{d-icbc}
\end{gather}
\end{subequations}%
\begin{equation}
(u, \theta)(t,0)
= (\ubh, \tbh)
:=
\Bigl( \frac{\ub}{|u_+|}, \frac{\tb}{\theta_+} \Bigr).
\label{d-bc}
\end{equation}
We also abbreviate the hat `` $\hat{}$ '' and 
write the dimensionless initial data
and boundary data as $(\rho_0,u_0,\theta_0)$ 
and $(\ub,\tb)$ respectively in (\ref{d-ic}) and (\ref{d-bc}).

\subsection{Main results}
\label{intro-main}

The main concern of the present paper is to consider the
large-time behavior of solutions to the problem (\ref{d-nse}),
(\ref{d-ic}) and (\ref{d-bc}).
Precisely we show that the solution converges to a stationary
solution $(\rt,\ut,\ttt)(x)$, which is a solution to (\ref{d-nse})
independent of time variable $t$.
Thus the stationary solution $(\rt,\ut,\ttt)$ satisfies
the system
\begin{subequations}
\label{ste}
\begin{gather}
(\rt \ut)_x = 0,
\label{st1}
\\
\Bigl(
\rt \ut^2 + \frac{1}{M_+^2} \pt
\Bigr)_x
=
\mu \ut_{xx},
\label{st2}
\\
\Bigl\{
\rt \ut \Bigl(
\frac{1}{M_+^2} \cv \ttt + \frac{\ut^2}{2}
\Bigr)
+ \frac{1}{M_+^2} \pt\ut
\Bigr\}_x
=
(\mu \ut \ut_x + \kappa \ttt_x)_x,
\label{st3}
\end{gather}
\end{subequations}
where $\pt := p(\rt,\ttt)$.
The stationary solution is supposed to satisfy the
same boundary condition (\ref{d-bc}) and the same
spatial asymptotic condition (\ref{d-icbc}):
\begin{equation}
(\ut, \ttt)(0) = (\ub, \tb),
\quad
\lim_{x \to \infty} (\rt, \ut, \ttt)(x) 
= (1, -1, 1).
\label{st-bc}
\end{equation}
We summarize the existence and the decay property of the
stationary solution $(\rt,\ut,\ttt)$ satisfying (\ref{ste})
and (\ref{st-bc}) in the following proposition.
To this end, we define a boundary strength $\dels$ as 
\[
%\dels := |\ub + 1| + |\tb - 1|.
\dels := |(\ub + 1, \tb - 1)|.
\]

\begin{proposition}
\label{pro-ex}
%Suppose that the boundary strength $\dels$ satisfies
Suppose that the boundary data $(\ub,\tb)$ satisfies
\begin{equation}
%\dels \le \ep_0
(\ub, \tb)
\in
\mnp
:=
\{
(u,\theta) \in \R^2
\; ; \;
|(u+1, \theta-1)| < \ep_0
\}
\label{sm-del}
\end{equation}
for a certain positive constant $\ep_0$.
Notice that 
%$\mnp$ is a open ball with radius $\ep_0$ centered at $(-1,1)$ and
the condition \eqref{sm-del} is equivalent to
$\dels < \ep_0$.

\vspace{-2mm}
\begin{enumerate}[\hspace{-7pt}\rm (i)]
\setlength{\itemsep}{0pt}

\item
For the supersonic case $M_+ > 1$,
there exists a unique smooth solution $(\rt,\ut,\ttt)$ to the
problem \eqref{ste} and \eqref{st-bc} satisfying
\begin{equation}
|\pd_x^k (\rt(x)-1, \ut(x)+1, \ttt(x)-1)|
\le
C \dels e^{-cx}
\ \; \text{for} \ \;
k = 0,1,2,\dots,
\label{dc-sp}
\end{equation}
where $C$ and $c$ are positive constants.

\item
For the transonic case $M_+ = 1$,
there exists a certain region $\mnz \subset \mnp$ such that
if the boundary data $(\ub,\tb)$ satisfies the condition
\begin{equation}
(\ub,\tb) \in \mnz,
\label{bc-mnz}
\end{equation}
then there exists a unique smooth solution $(\rt,\ut,\ttt)$ 
%to the problem \eqref{ste} and \eqref{st-bc} 
satisfying
\begin{equation}
|\pd_x^k (\rt(x)-1, \ut(x)+1, \ttt(x)-1)|
\le
C \frac{\dels^{k+1}}{(1+\dels x)^{k+1}}
+ C \dels e^{-cx}
\ \; \text{for} \ \;
k = 0,1,2,\dots.
\label{dc-tr}
\end{equation}

\item
For the subsonic case $M_+ < 1$,
there exists a certain curve $\mnn \subset \mnp$ such that
if the boundary data $(\ub,\tb)$ satisfies the condition
\begin{equation}
(\ub,\tb) \in \mnn,
\label{bc-mnn}
\end{equation}
then there exists a unique smooth solution $(\rt,\ut,\ttt)$ 
%to the problem \eqref{ste} and \eqref{st-bc} 
satisfying \eqref{dc-sp}.

\end{enumerate}
\end{proposition}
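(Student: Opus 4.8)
The plan is to reduce the stationary system \eqref{ste} to a planar autonomous system of ordinary differential equations and then to read off the three cases from the linearization at the asymptotic state. First I would integrate \eqref{st1} to get $\rt\ut \equiv -1$ (the value of $\rho u$ at $x=\infty$), which lets me eliminate the density via $\rt = -1/\ut$; since $\ut$ stays near $-1$ this is smooth. Integrating \eqref{st2} and \eqref{st3} once over $(x,\infty)$ and using $\ut_x,\ttt_x\to 0$ to fix the constants of integration, I would obtain a closed first-order system
\begin{equation*}
\mu\,\ut_x = f(\ut,\ttt),
\qquad
\kappa\,\ttt_x = g(\ut,\ttt),
\end{equation*}
with $f,g$ smooth for $\ut<0$ and $f(-1,1)=g(-1,1)=0$ by construction. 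Thus $(\ut,\ttt)=(-1,1)$ is the unique relevant equilibrium, and a stationary solution satisfying \eqref{st-bc} is exactly an orbit of this system that starts at $(\ub,\tb)$ at $x=0$ and tends to $(-1,1)$ as $x\to\infty$.

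Next I would linearize at $(-1,1)$ and compute the Jacobian $J$. The key algebraic fact to establish is that its determinant is a positive multiple of $M_+^2-1$ while its trace is negative throughout a neighborhood of the sonic point; this is the precise place where the Mach number enters. Granting this, the equilibrium is a stable node (two eigenvalues with negative real part) in the supersonic case $M_+>1$, a saddle (one negative and one positive eigenvalue) in the subsonic case $M_+<1$, and a degenerate point with one negative and one zero eigenvalue in the transonic case $M_+=1$.

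For the two hyperbolic cases I would invoke the stable manifold theorem. When $M_+>1$ the stable manifold is two-dimensional, so every boundary datum in a small neighborhood --- which I would take as $\mnp$ --- lies on a decaying orbit; the spectral gap gives $|(\ut+1,\ttt-1)|\le C\dels e^{-cx}$, and differentiating the ODE repeatedly bootstraps the same bound for all $\pd_x^k$, while $\rt-1=-(\ut+1)/\ut$ inherits the estimate. This proves \eqref{dc-sp}. When $M_+<1$ the stable manifold is one-dimensional; the admissible boundary data are exactly those lying on this curve, which I would define to be $\mnn$, and the same exponential/bootstrap argument yields \eqref{dc-sp} again.

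The hard part is the transonic case $M_+=1$, where I would use center manifold theory. Here there is a one-dimensional stable manifold and a one-dimensional center manifold; orbits approach the center manifold exponentially, so the long-time decay is governed by the scalar reduced equation on it, which I expect to take the degenerate form $w_x=b\,w^2+O(w^3)$ because the linear part vanishes. Computing the sign of the quadratic coefficient $b$ is the crux: it decides on which side of the equilibrium orbits actually converge, and this one-sided condition is precisely what cuts the full neighborhood $\mnp$ down to the admissible region $\mnz$. Integrating $w_x=b\,w^2+\cdots$ with $w(0)=O(\dels)$ gives $w(x)\sim \dels/(1+\dels x)$, whence $\pd_x^k w\sim \dels^{k+1}/(1+\dels x)^{k+1}$; adding the exponentially small contribution from the stable direction produces exactly the two-term bound \eqref{dc-tr}. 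Tracking the $\dels$-dependence uniformly through the center manifold reduction, and verifying the sign of $b$, are the main obstacles; the remainder is routine ODE bootstrapping.
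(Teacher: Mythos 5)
Your proposal follows essentially the same route as the paper: integrate the stationary system down to a planar autonomous ODE for $(\ut,\ttt)$ near the equilibrium $(-1,1)$, note that $\det J$ is a positive multiple of $M_+^2-1$ while $\tr J<0$, and then handle the three cases via the stable manifold theorem ($M_+\neq 1$) and center manifold theory with a degenerate quadratic reduced equation ($M_+=1$), the admissible set $\mnz$ being the one side of the local stable manifold on which orbits converge. The two facts you flag as the main obstacles are exactly what the paper computes explicitly: the eigenvalues are real with the stated signs, and the reduced equation on the center manifold is $\zt_x=-\frac{\gamma+1}{2d}\zt^2+O(|\zt|^3)$ with $d=\mu+\kappa(\gamma-1)^2>0$, so the quadratic coefficient is strictly negative and convergence holds precisely for data with $\uub\ge\hsm(\ttb)$, yielding the decay rates \eqref{dc-sp} and \eqref{dc-tr}.
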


\begin{figure}[tb]
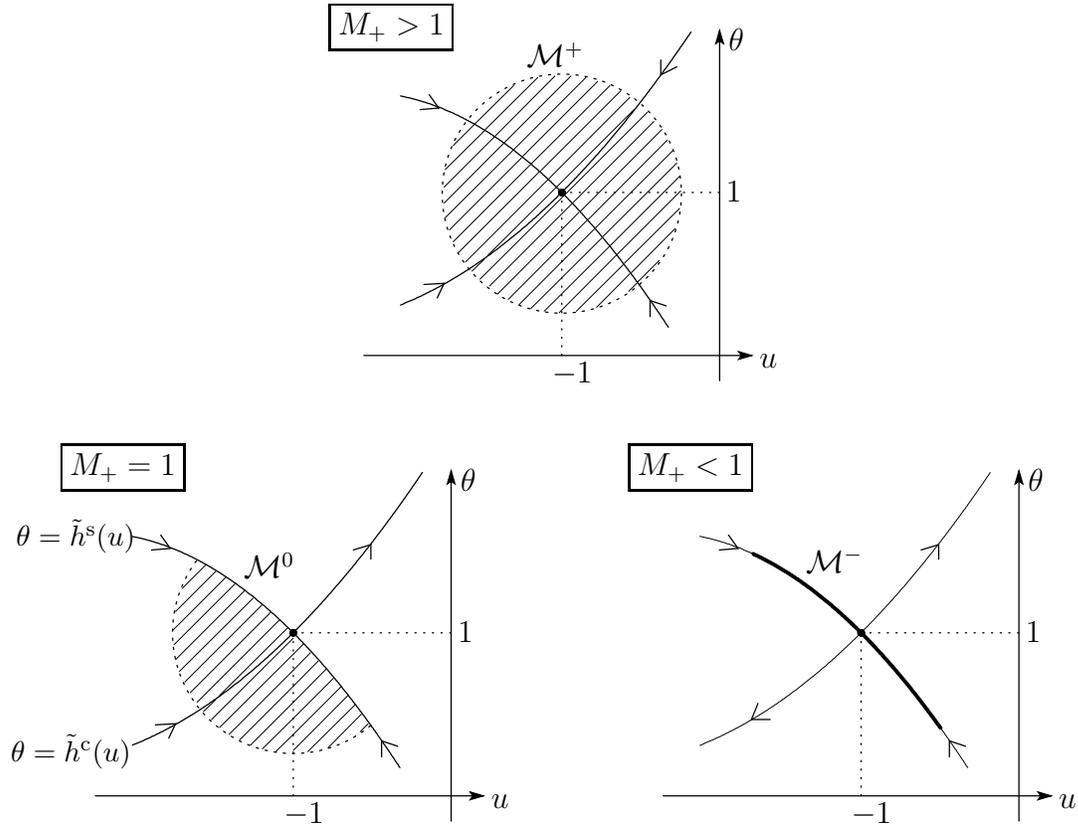

\begin{center}
\input{fig01a.tex}
\hspace{15mm} \ 

\vspace{10mm}

\hspace*{-15mm}
\input{fig02a.tex}
\hspace{0mm}
\input{fig03a.tex}
\end{center}
\caption{%
%Region of the equilibrium $(-1,1)$: 
For the transonic case $M_+=1$, 
%for the existence of the stationary solution, 
%the boundary data $(\ub,\tb)$ should be
%located in one side of the stable manifold.
the region $\mnz$ consists of one side of $\mnp$ divided by
the local stable manifold $\theta = \thsm(u)$.
For the subsonic case $M_+ < 1$,
the curve $\mnn$ coincides with the local stable manifold.
}
\label{fig-mani}
\end{figure}

The rough sketches of the regions $\mnp$, $\mnz$ and $\mnn$ are
drawn in Figure \ref{fig-mani}.
The precise definitions of $\mnz$ and $\mnn$ are given
in (\ref{def-mn}).
The boundary of $\mnz$, which is the stable manifold for the
stationary problem,
is a curve in the state space.
The geometric property of this curve is completely characterized
by the Prandtl number.
This observation is discussed in Section \ref{st-mani}.

The asymptotic stability of the stationary solution $(\rt,\ut,\ttt)$
%obtained in Proposition \ref{pro-ex} 
is stated in the next theorem.

\begin{theorem}
\label{th-sb}
Suppose that the stationary solution $(\rt,\ut,\ttt)$ exists.
Namely it is assumed that one of the following three conditions holds:
{\rm (i)} $M_+ > 1$ and \eqref{sm-del}, \ 
{\rm (ii)} $M_+ = 1$ and \eqref{bc-mnz}, \ 
{\rm (iii)} $M_+ < 1$ and \eqref{bc-mnn}.
%
%\vspace{-2mm}
%\begin{enumerate}[\hspace{10pt}\rm (i)]
%\setlength{\itemsep}{0pt}
%
%\item
%$M_+ > 1$ and \eqref{sm-del},
%
%\item
%$M_+ = 1$, \eqref{sm-del} and \eqref{sm-tr},
%
%\item
%$M_+ < 1$, \eqref{sm-del} and \eqref{sm-sub}.
%
%\end{enumerate}
%\vspace{-2mm}
%
%Suppose that the condition \eqref{sm-del} holds and
%that the boundary data $(\ub,\tb)$ satisfies
%\eqref{sm-tr} for $M_+ =1$ or \eqref{sm-sub} for $M_+ < 1$.
%
In addition, the initial data $(\rho_0,u_0,\theta_0)$ is supposed
to satisfy
\begin{gather*}
\rho_0 \in \calb^{1+\sigma} (\R_+),
\quad
(u_0,\theta_0)
\in
\calb^{2+\sigma} (\R_+),
\\
(\rho_0,u_0,\theta_0) - (\rt,\ut,\ttt)
\in
H^1 (\R_+)
\end{gather*}
for a certain constant $\sigma \in (0,1)$.
Then there exists a positive constant $\ep_1$ such that
if
\[
\ho{(\rho_0,u_0,\theta_0) - (\rt,\ut,\ttt)} + \dels \le \ep_1,
\] 
then the initial boundary value problem \eqref{d-nse}, \eqref{d-ic}
and \eqref{d-bc} has a unique solution globally in time
satisfying
\begin{equation}
\begin{gathered}
\rho \in \holh,
\quad
(u, \theta) \in \holp,
\\
(\rho, u, \theta) - (\rt, \ut, \ttt)
\in
C ([0,\infty) ; H^1(\R_+))
\end{gathered}
\label{global-sp}
\end{equation}
for an arbitrary $T > 0$.
Moreover, the solution $(\rho,u,\theta)$ converges to the
stationary solution $(\rt,\ut,\ttt)$ uniformly 
as time tends to infinity:
\begin{equation}
\lim_{t \to \infty}
\li{(\rho, u, \theta)(t) - (\rt,\ut,\ttt)} 
= 0.
\label{conv-1}
\end{equation}
\end{theorem}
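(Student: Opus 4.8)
The plan is to recast the problem in the perturbation $(\vp,\psi,\chi) := (\rho,u,\theta)-(\rt,\ut,\ttt)$ and run the standard continuation scheme: local existence plus a uniform a priori estimate produce a global solution, after which the dissipation generated by that estimate forces uniform decay. Subtracting the stationary system \eqref{ste} from \eqref{d-nse} gives a coupled system in which $\vp$ obeys a first-order transport-type (hyperbolic) equation with \emph{no} self-smoothing, while $\psi$ and $\chi$ satisfy viscous parabolic equations; the coefficients depend on the stationary profile, whose derivatives are $O(\dels)$ and decay in $x$ (exponentially for $M_+\neq 1$, only algebraically for $M_+=1$) by Proposition \ref{pro-ex}. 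The boundary conditions \eqref{d-bc} and \eqref{st-bc} yield $\psi(t,0)=\chi(t,0)=0$, while no condition is placed on $\vp(t,0)$. Since in one dimension $\li{f}\le C\ho{f}$, smallness of the $H^1$ norm keeps $\rho$ and $\theta$ positive and renders all nonlinearities quadratic perturbations.

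After establishing local-in-time solvability in the class \eqref{global-sp} by a standard iteration argument, the core is the a priori estimate. Writing $N(T)^2:=\sup_{0\le t\le T}\ho{(\vp,\psi,\chi)(t)}^2$ and assuming $N(T)+\dels$ small, I would derive a closed bound of the form
\[
\ho{(\vp,\psi,\chi)(t)}^2 + \int_0^t \bigl( \lt{\vp_x(s)}^2 + \ho{(\psi_x,\chi_x)(s)}^2 + |\vp(s,0)|^2 \bigr)\,ds \le C\,\ho{(\vp_0,\psi_0,\chi_0)}^2 ,
\]
valid once $N(T)+\dels$ is small enough that all stationary-coefficient and genuinely nonlinear remainders are absorbed into the left-hand dissipation. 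The zeroth-order step is an entropy-type $L^2$ estimate: pairing the three perturbation equations with suitable multipliers in $\vp,\psi,\chi$ weighted by the stationary state (the quadratic analogue of the relative entropy, which symmetrizes the system) and integrating over $\R_+$. The parabolic equations supply the interior dissipation $\mu\int_{\R_+}\rt\,\psi_x^2 + \kappa\int_{\R_+}\chi_x^2$; the convective boundary terms in $\psi,\chi$ collapse because $\psi(t,0)=\chi(t,0)=0$, and the surviving density boundary term carries the factor $\ut(0)=\ub<0$ with a favorable sign, producing the dissipative contribution $|\vp(t,0)|^2$. This exploitation of the \emph{outflow} sign is precisely what compensates for the missing boundary condition on the density.

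Because the density equation lacks interior dissipation, I would recover $\int_0^t\lt{\vp_x}^2$ from the momentum balance: the pressure gradient $\pd_x(p-\pt)$ contains $\vp_x$ with a coefficient of definite sign, so pairing the momentum equation with a multiple of $\vp_x$ produces an interior term $\sim\int_{\R_+}\vp_x^2$ at the price of $\psi_t$- and boundary terms, absorbed using the velocity dissipation and integration by parts in $t$. Differentiating the system once in $x$ and repeating both steps controls $\lt{\vp_x}$, $\ho{(\psi_x,\chi_x)}$ and the attendant second-order dissipation; the stationary-coefficient remainders are bounded by $C\dels$ times the energy via Proposition \ref{pro-ex}, and the nonlinear terms by $CN(T)$ times the dissipation. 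Choosing $\ep_1$ small then gives $N(T)\le C\,\ho{(\vp_0,\psi_0,\chi_0)}$ uniformly in $T$, and the continuation principle yields the global solution \eqref{global-sp}.

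Finally, the a priori bound furnishes $\int_0^\infty \bigl(\lt{\vp_x}^2 + \ho{(\psi_x,\chi_x)}^2\bigr)\,dt<\infty$; differentiating these quantities in $t$ and invoking the equations shows their $t$-derivatives are integrable, so each tends to $0$ as $t\to\infty$. Combined with the one-dimensional interpolation $\li{f}^2\le C\,\lt{f}\,\lt{f_x}$ and the uniform $H^1$ bound, this gives $\li{(\vp,\psi,\chi)(t)}\to0$, which is \eqref{conv-1}. I expect the \textbf{main obstacle} to be the density estimate: closing $\lt{\vp_x}$ and $|\vp(t,0)|$ in the absence of smoothing, keeping the momentum-multiplier argument compatible with the entropy weights and the boundary terms, and making the whole scheme uniform across the three regimes of Proposition \ref{pro-ex}---in particular the transonic case $M_+=1$, where the stationary profile decays only algebraically and the coefficient remainders are the least integrable.
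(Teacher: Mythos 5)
Your scheme for the non-degenerate cases $M_+\neq 1$ is essentially the paper's own proof: the relative-entropy $L^2$ estimate with multipliers $\psi$ and $\chi/\theta$ and the outflow boundary sign yielding the dissipative term $c\,\vp(t,0)^2$ (Lemma \ref{lm-a}); the momentum equation paired with $\rho\vp_x$ to produce the interior term $\frac{1}{M_+^2}\int p\,\vp_x^2$ (Lemma \ref{lm-b}); the first-derivative estimates (Lemmas \ref{lm-c}, \ref{lm-d}); absorption of profile-coefficient terms by $C\dels$ via the Poincar\'e-type inequality \eqref{poin} and of nonlinear terms by $CN(T)$; continuation; and the decay argument through $I(t)=\lt{\vp_x(t)}^2$, $|\frac{d}{dt}I(t)|\le C D(t)^2$, combined with $\li{f}^2\le C\lt{f}\,\lt{f_x}$ (Section \ref{prf-th-sb}). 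One technical point you pass over: since the local existence time in Lemma \ref{local-ext} depends on H\"older norms of the data, the continuation argument also needs an a priori H\"older bound, which the paper obtains from parabolic Schauder theory after passing to the Lagrangian coordinate.

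The genuine gap is case (ii), $M_+=1$, which you correctly flag as the main obstacle but do not resolve --- and your proposed absorption mechanism does in fact fail there. By \eqref{dc-tr} the profile derivatives decay only like $\zt^2$ with $\zt(x)\sim\dels/(1+\dels x)$, and $\lpa{1}{1}{\zt^2}=\int_{\R_+}(1+x)\,O(\dels^2)(1+\dels x)^{-2}\,dx$ diverges logarithmically; hence the Poincar\'e-type inequality \eqref{poin} --- the only tool in your scheme for converting profile-weighted quadratic terms into $\dels\,(\vp(t,0)^2+\lt{\vvp_x}^2)$ --- gives nothing for the terms $\ut_x G_1^{(2)}+\ttt_x G_1^{(3)}$ in \eqref{a02}, so they cannot be treated as $O(\dels)$ remainders. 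The paper's resolution (Section \ref{apri-d}) is structural rather than perturbative: using the center-manifold expansions \eqref{dst-1}--\eqref{dst-2} of the degenerate profile, it shows in \eqref{e02} that
\[
\ut_x G_1^{(2)}+\ttt_x G_1^{(3)}
=
-\frac{\gamma+1}{4\gamma d}\,\zt^2\,F_1(\vp,\psi,\chi)
+O(N(t)+\dels)\,\zt^2|\vvp|^2
+O(\dels)e^{-cx}|\vvp|^2,
\]
where $F_1=(\gamma-1)(\vp-\psi)^2+(\psi+\chi)^2+\gamma\psi^2$ is positive definite by \eqref{e03}. Thus the dangerous profile-weighted terms are not errors at all: they carry a favorable sign and generate the extra dissipation $\dels^2\ltat{-2}{\vvp(t)}^2$, which is built into the modified norm $\tilde{D}(t)$ and is exactly what absorbs the $\zt^2$-weighted remainders in the first-order estimates (Lemmas \ref{lm-e}, \ref{lm-f}). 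Without identifying this cancellation --- which requires the sharp expansion of the degenerate stationary wave, not merely the decay bound \eqref{dc-tr} --- the energy estimate cannot be closed for $M_+=1$, so as written your argument proves the theorem only in cases (i) and (iii).
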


We also show a convergence rate for the stability \eqref{conv-1}
by assuming additionally that the initial perturbation 
belongs to the weighted Sobolev space.

\begin{theorem}
\label{th-cv}
Suppose that the same conditions as in Theorem {\rm \ref{th-sb}}
hold.

\vspace{-2mm}
\begin{enumerate}[\hspace{-7pt}\rm (i)]
\setlength{\itemsep}{0pt}

\item
For the supersonic case $M_+>1$,
if the initial perturbation satisfies
\begin{equation*}
(\rho_0,u_0,\theta_0) - (\rt,\ut,\ttt)
\in
L^2_\alpha(\R_+)
\end{equation*}
for a certain positive constant $\alpha$,
then the solution $(\rho,u,\theta)$ to \eqref{d-nse}, \eqref{d-ic}
and \eqref{d-bc} satisfies the decay estimate
\begin{equation}
\li{(\rho,u,\theta)(t) - (\rt,\ut,\ttt)}
\le
C (1+t)^{-\alpha/2}.
\label{conv-sp}
\end{equation}

\item
For the transonic case $M_+=1$,
let $\alpha \in [1,2(1+\sqrt{2}))$.
There exists a positive constant $\ep_2$ such that if
\begin{equation*}
\dels^{-1/2}
\hoa{\alpha}{(\rho_0,u_0,\theta_0) - (\rt,\ut,\ttt)}
\le
\ep_2,
\end{equation*}
then the solution $(\rho,u,\theta)$ satisfies
the decay estimate
\begin{equation}
\li{(\rho,u,\theta)(t) - (\rt,\ut,\ttt)}
\le
C (1+t)^{-\alpha/4}.
\label{conv-tr}
\end{equation}
\end{enumerate}
\end{theorem}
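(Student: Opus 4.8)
The global solution and its uniform bound are furnished by Theorem~\ref{th-sb}, so it remains only to extract the temporal decay of the perturbation $(\phi,\psi,\chi) := (\rho,u,\theta)-(\rt,\ut,\ttt)$, which solves the quasilinear perturbation system subject to the homogeneous outflow boundary condition $(\psi,\chi)(t,0)=(0,0)$ (while $\phi$ carries no boundary condition). Since $\li{\,\cdot\,}\le C\ho{\,\cdot\,}$ by the one-dimensional Sobolev inequality, both \eqref{conv-sp} and \eqref{conv-tr} reduce to an algebraic-in-time bound for the weighted energy $\ho{(\phi,\psi,\chi)(t)}$. The plan is to run a space- and time-weighted energy method: first derive a hierarchy of spatially weighted estimates in which the weight itself furnishes extra dissipation, and then convert each power of spatial weight into a power of temporal decay by a time-weighted induction.

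\textbf{Supersonic case.} The engine of the argument is the sign of the outflow velocity. Multiplying the density equation by $(1+x)^\beta\phi$ and integrating by parts produces, from the convective term $\ut\phi_x$, the interior contribution $\tfrac12\int_0^\infty[\,\beta(1+x)^{\beta-1}(-\ut)+\cdots\,]\phi^2\,dx$ together with the boundary term $-\tfrac12\ub\,\phi(t,0)^2$; because $\ut<0$ throughout and $\ub<0$, both are nonnegative and the former dominates the lower-weight density energy $\int_0^\infty(1+x)^{\beta-1}\phi^2\,dx$. Combining this with the parabolic dissipation of $\pd_x\psi$ and $\pd_x\chi$ coming from $\mu$ and $\kappa$, and using the exponential decay \eqref{dc-sp} of the stationary solution to render all variable-coefficient and quadratic remainders harmless, one obtains for each $\beta\in[0,\alpha]$ a weighted estimate of the form $\tfrac{d}{dt}E_\beta+cD_\beta\le(\text{lower order})$, where $E_\beta$ is the $(1+x)^\beta$-weighted $H^1$ energy and the total dissipation $D_\beta$ dominates $E_{\beta-1}$ up to harmless terms.

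\textbf{Time-weighted induction.} I then prove by induction on $k=0,1,\dots$ that $(1+t)^{k}E_{\alpha-k}(t)+\int_0^t(1+\tau)^kD_{\alpha-k}\,d\tau\le C$. The step from $k$ to $k+1$ multiplies the $E_{\alpha-k-1}$-estimate by $(1+t)^{k+1}$; the resulting forcing $(k+1)\int_0^t(1+\tau)^kE_{\alpha-k-1}\,d\tau$ is absorbed using $E_{\alpha-k-1}\lesssim D_{\alpha-k}$ together with the bound already secured at level $k$. Thus each unit of spatial weight is traded for one power of temporal decay, and upon reaching $k=\alpha$ one gets $\ho{(\phi,\psi,\chi)(t)}^2\le C(1+t)^{-\alpha}$, which yields \eqref{conv-sp}; for non-integer $\alpha$ the same scheme runs with fractional time weights.

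\textbf{Transonic case and the main difficulty.} When $M_+=1$ the asymptotic state is sonic, so the acoustic speed $\ut+c$ vanishes as $x\to\infty$ and the stationary solution decays only algebraically, like $(1+\dels x)^{-1}$, by \eqref{dc-tr}. Consequently the gain produced by the space weight degenerates at spatial infinity: each iteration recovers only half a power of $t$, which is exactly why the rate drops to $(1+t)^{-\alpha/4}$. To handle this I would choose the weight adapted to the self-similar profile $(1+\dels x)^{-1}$ and track the $\dels$-dependence throughout; the slowly varying coefficients can be absorbed only when the weighted perturbation is small relative to the layer width $\dels^{-1}$, which is the source of the smallness condition $\dels^{-1/2}\hoa{\alpha}{(\rho_0,u_0,\theta_0)-(\rt,\ut,\ttt)}\le\ep_2$. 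The principal obstacle is the borderline exponent: carrying the degenerate weighted estimate up to $\alpha<2(1+\sqrt2)$ requires balancing the algebraic decay of the profile against the weight through a quadratic constraint whose admissible range terminates precisely at $2(1+\sqrt2)$, and keeping every $\dels$-weight consistent at this borderline is the most delicate part of the proof.
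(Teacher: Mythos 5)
Your overall architecture---spatially weighted energy estimates in which the derivative of the weight supplies extra dissipation, followed by an induction trading each unit of spatial weight for temporal decay---is exactly the paper's strategy (Propositions \ref{pro-3} and \ref{pro-4}), and your rate accounting (one power of weight per power of $t$ when $M_+>1$, two when $M_+=1$, hence $(1+t)^{-\alpha/2}$ versus $(1+t)^{-\alpha/4}$) is correct. However, the engine you propose for the supersonic estimate is wrong, and this is a genuine gap. You derive the weighted gain from the convection term of the density equation alone, arguing that $\ut<0$ and $\ub<0$ make the interior and boundary contributions nonnegative, and you relegate the coupling to $(\psi,\chi)$ to ``variable-coefficient and quadratic remainders''. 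But the coupling is not a remainder: when the energy identity \eqref{a02} is multiplied by $(1+x)^\beta$, the weight derivative falls on the total flux $G_1^{(1)}=-\rho u\cale-\frac{1}{M_+^2}(p-\pt)\psi$, whose pressure part contributes cross terms of the size $-\frac{1}{M_+^2\gamma}(\vp+\chi)\psi$ with order-one constant coefficients; by Cauchy--Schwarz these are of exactly the same magnitude as the ``good'' diagonal terms, so no smallness or absorption argument applies. What is actually needed is positive definiteness of the quadratic form $F_2$ in \eqref{g10}, and by the identity \eqref{g05}, $F_2=(\gamma-1)(\vp-\psi)^2+\{(\gamma-1)\psi-\chi\}^2+\gamma(\gamma-1)(M_+^2-1)\psi^2$, this holds precisely because $M_+>1$. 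The sign of the velocity cannot be the mechanism: $u<0$ and $\ub<0$ hold in the subsonic case as well, and yet, as the paper remarks after Theorem \ref{th-cv}, the weighted method fails for $M_+<1$---equivalently, $F_2$ is then indefinite. Since your proposal never isolates this quadratic form, the central step of your supersonic argument does not survive the pressure coupling.

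The same gap recurs, more severely, in the transonic case. You correctly anticipate a quadratic constraint terminating at $2(1+\sqrt{2})$, but you give no mechanism producing dissipation in the direction where the flux form degenerates. At $M_+=1$ the form $F_2$ is only positive semi-definite, with null direction $q_1=\bar{q}\,(1,1,\gamma-1)^\trp$; the paper (Lemma \ref{lm-j}) recovers dissipation along $q_1$ from the profile-derivative terms $\ut_x G_1^{(2)}+\ttt_x G_1^{(3)}$, which by \eqref{dst-2} and \eqref{e02} contribute $-\frac{\gamma+1}{2d}\zt^2$ times the positive form $F_1$, and the competition between this gain, the weight-derivative term $w_x F_2$, and the terms produced by completing squares with $w_x B_1$ reduces, along $q_1$, to the sign of $\hat{a}_{11}=\frac{\gamma+1}{4}\bar{q}^{\,2}(4+4\beta-\beta^2)$ in \eqref{j12}; this is the actual origin of the restriction $\beta<2(1+\sqrt{2})$. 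In addition, the leftover cubic terms $\zt^{-\beta+1}|\vvp|^3$ must be controlled by the interpolation inequality of Lemma \ref{lm-i}, which is where both the lower restriction $\alpha\ge 1$ and the $\dels^{-1/2}$-scaled smallness hypothesis enter; your proposal names that smallness condition but supplies no estimate that would produce it.
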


\begin{remark}
(i) 
For the supersonic case $M_+ > 1$, we can prove an exponential
convergence rate
\begin{equation*}
\li{(\rho,u,\theta)(t) - (\rt,\ut,\ttt)}
\le
C e^{-\alpha t}
\end{equation*}
provided that the initial data satisfies the conditions as in
Theorem \ref{th-sb} and 
$$ (\rho_0,u_0,\theta_0) - (\rt,\ut,\ttt) \in 
 L^2_{\zeta,\text{\rm exp}} (\R_+)
:= \{ u \in L^2_{\text{\rm loc}} (\R_+) ; e^{(\zeta/2) x} u \in
     L^2 (\R_+) \},$$
where $\alpha$ is a positive constant depending on $\zeta$.
Since the proof is almost same as that for the isentropic
model studied in the paper \cite{nny07}, we omit the details.
\\
(ii) 
To obtain the convergence rates (\ref{conv-sp}) and (\ref{conv-tr}),
we derive weighted energy estimates.
In the derivation,
we essentially use a property that all of characteristics
of a hyperbolic system, 
which is obtained by letting $\mu = 0$ and $\kappa = 0$ in (\ref{d-nse}),
are non-positive at spatial asymptotic state.
However, for the subsonic case $M_+ < 1$, one characteristic is
positive.
Due to this, it is difficult to obtain a convergence rate
for the subsonic case by using the weighted energy method.
\\
(iii) 
Compared with the results in \cite{km-85,mn-94,n85-bg} considering
the convergence rate for a scalar viscous conservation law,
the convergence rates in (\ref{conv-sp}) and (\ref{conv-tr}) seem optimal.
For the transonic case, owing to the degenerate property of
the stationary solution, the weight exponent $\alpha$ needs
to be less than a certain constant, i.e., $\alpha < 2(1+\sqrt{2})$.
This kind of restriction on the weight exponent is also necessary 
to obtain a convergence rate $O(t^{-\alpha/4})$ toward the 
degenerate nonlinear waves
for a scalar viscous conservation law and an isentropic model
studied in the papers \cite{mn-94,nny07,unk-ns-pre,unk08}.
We note that, in the papers \cite{unk-ns-pre,unk08},
 the same restriction $\alpha < 2(1+\sqrt{2})$ is also required for 
an isentropic model and 
a scalar viscous conservation law $u_t + f(u)_x = u_{xx}$, 
where a degeneracy exponent is equal to 1, that is,
$f(u) = C (u-u_+)^2 + O(|u-u_+|^3)$.
Recently, Kawashima and Kurata in \cite{kur08}
studied the stability of the degenerate stationary solution for a
viscous conservation law and
obtained the same convergence rate $O(t^{-\alpha/4})$
by using the weighted energy method combined with
the Hardy type inequality
 under a more moderate restriction $\alpha <5$,
which is best possible in the sense that the linearized operator
around the degenerate stationary solution is not  dissipative
in $L^2_\alpha$ for $\alpha > 5$.
\end{remark}

\noindent {\bf Related results.}
From the pioneering work \cite{oleinik} by Il'in and Ole{\u\i}nik,
there have been many studies on the stability of several
nonlinear waves for a scalar viscous conservation law.
For instance,
Kawashima, Matsumura and Nishihara in \cite{km-85,mn-94,n85-bg}
obtained a convergence rate toward a traveling wave
for the Cauchy problem.
% of a scalar viscous conservation law.
For a one-dimensional half space problem,
Liu, Matsumura and Nishihara in \cite{lmn98} considered
the stability of the stationary solution.

For the half space problem of the isentropic model,
Kawashima, Nishibata and Zhu \cite{knz03} proved the existence and the
asymptotic stability of the stationary solution for
the outflow problem.
The convergence rate for this stability result was obtained 
by Nakamura, Nishibata and Yuge in \cite{nny07} by assuming
that the initial perturbation decays in a spatial direction.
The generalization of this one-dimensional outflow problem
to the multi-dimensional half space problem were 
studied by Kagei, Kawashima, Nakamura and Nishibata in \cite{kg06,nn07-p}.
Precisely, Kagei and Kawashima in \cite{kg06} proved the
asymptotic stability of a planar stationary solution in a suitable 
Sobolev space.
The convergence rate was obtained by Nakamura and Nishibata 
in \cite{nn07-p}.
There are also several works on the stationary problem for the
Boltzmann equation (or BGK model) in half space.
See \cite{aoki-91,aoki-90} for numerical computations and
\cite{sone-98} for asymptotic analysis.

\bigskip

\noindent {\bf Outline of the paper.}
The remainder of the present paper is organized as follows.
In Section \ref{st}, we discuss the existence of the
stationary solution and present the proof of Proposition \ref{pro-ex}.
In Section \ref{st-deg}, we show a precise decay property
of the degenerate stationary solution, which is utilized
in the stability analysis   of the degenerate stationary
solution.
In Section \ref{est},  Theorem \ref{th-sb} is proved 
by deriving uniform {\it a priori} estimates of the perturbation
from the stationary solution in $H^1$ Sobolev space
by  an energy method.
Finally, in Section \ref{w-est}, we prove Theorem \ref{th-cv}.
The crucial argument is to derive time and space weighted energy
estimates.
For the supersonic case, in Section \ref{w-est-nd},
we obtain the weighted estimates in $L^2$
space and combine it with the uniform estimates in $H^1$
obtained in Section \ref{est}.
Then we obtain the convergence rate (\ref{conv-sp})
with the aid of induction.
However, owing to the degenerate property of the transonic flow,
we have to derive the weighted estimate not only in $L^2$
but also in $H^1$ in order to obtain the convergence rate (\ref{conv-tr}).
This is discussed in Section \ref{w-est-d}.

\bigskip

\noindent {\bf Notations.}
The Gaussian bracket $[x]$ denotes 
the greatest integer which does not exceed $x$.
For $p \in [1,\infty]$, $L^p (\R_+)$ denotes the
standard Lebesgue space over $\R_+$ 
equipped with the norm $\lp{p}{\cdot}$.
We use the notation $\lt{\cdot} := \lp{2}{\cdot}$.
For a non-negative integer $s$, $H^s (\R_+)$ 
denotes the $s$-th order Sobolev space over $\R_+$
in the $L^2$ sense with the norm
\[
\hs{s}{u}
:=
\Bigl(
\sum_{k=0}^s \lt{\pd_x^k u}^2
\Bigr)^{1/2}.
\]

%Next we define weighted spaces.
For constants $p \in [1,\infty)$ and $\alpha \in \R$,
$L^p_\alpha (\R_+)$ denotes the algebraically weighted
$L^p$ space defined by
$L^p_\alpha (\R_+) := 
\{ u \in L^p_{\text{loc}} (\R_+) 
\; ; \; \lpa{p}{\alpha}{u} < \infty \}$
equipped with the norm
\[
\|u \|_{L^p_\alpha}
:=
\Bigl(
\int_{\R_+} (1+x)^\alpha |u(x)|^p \, dx
\Bigr)^{1/p}.
\]
We also use the notation $\lta{\alpha}{\cdot} := \ltaa{\alpha}{\cdot}$.
The space $H^s_\alpha (\R_+)$ denotes the algebraically weighted
$H^s$ space corresponding to $L^2_\alpha (\R_+)$ defined by
$H^s_\alpha (\R_+) := \{ u \in L^2_\alpha(\R_+) \; ; \; 
 \pd_x^k u \in L^2_\alpha (\R_+) \ \text{for} \ k = 0,\dots,s \}$,
equipped with the norm
\[
\hsa{s}{\alpha}{u}
:=
\Bigl(
\sum_{k=0}^s \lta{\alpha}{\pd_x^k u}^2
\Bigr)^{1/2}.
\]

%Finally we define H\"older spaces.
For $\alpha \in (0,1)$, $\calb^{\alpha} (\R_+)$ denotes
the space of 
the H\"older continuous functions over $\R_+$
with the H\"older exponent $\alpha$ with respect to $x$.
For a non-negative integer $k$,
$\calb^{k+\alpha} (\R_+)$ denotes the
space of functions satisfying $\pd_x^i u \in \calb^{\alpha} (\R_+)$
for an arbitrary $i = 0,\dots,k$
equipped with the norm $\holx{k+\alpha}{\cdot}$.
For $\alpha, \beta \in (0,1)$ and $T > 0$,
$\calb^{\alpha,\beta} ([0,T] \times \R_+)$ denotes 
the space of
the H\"older continuous functions over $[0,T] \times \R_+$
with the H\"older exponents $\alpha$ and $\beta$
with respect to $t$ and $x$, respectively.
For non-negative integers $k$ and $\ell$,
$\calb^{k+\alpha,\ell+\beta}_T
:=
\calb^{k+\alpha,\ell+\beta} ([0,T] \times \R_+)$ denotes the
space of functions satisfying 
$\pd_t^i u$, $\pd_x^j u \in \calb^{\alpha,\beta} ([0,T] \times \R_+)$
for arbitrary $i = 0,\dots,k$ and $j = 0,\dots,\ell$
equipped with the norm $\holtx{k+\alpha}{\ell+\beta}{\cdot}$.

\section{Existence of stationary solution}
\label{st}

This section is devoted to showing Proposition \ref{pro-ex}.
Precisely we prove the existence of a solution to the
stationary problem (\ref{ste}) and (\ref{st-bc}).
To this end, we reformulate the  problem (\ref{ste})
and (\ref{st-bc}) into a $2 \times 2$ autonomous system of 
ordinary differential equations of first order.

\subsection{Reformulation of stationary problem}
\label{st-prob}

Integrating \eqref{st1} over $(x,\infty)$, we have
\begin{equation}
\rt(x) \ut(x) = -1.
\label{ss01}
\end{equation}
Integrating (\ref{st2}) and (\ref{st3}) over $(x,\infty)$ and
substituting (\ref{ss01}) in the resultant,
we obtain the system of equations for
$
(\ubar, \tbar) (x) := (\ut, \ttt)(x) - (-1, 1)
$
as
\begin{equation}
\frac{d}{dx}
\begin{pmatrix}
\ubar \\ \tbar
\end{pmatrix}
=
J
\begin{pmatrix}
\ubar \\ \tbar
\end{pmatrix}
+
\begin{pmatrix}
\fbar(\ubar, \tbar) \\
\gbar(\ubar, \tbar)
\end{pmatrix},
\label{ode-eq}
\end{equation}
where $J$ is the Jacobian matrix at an equilibrium point $(0,0)$ defined by
\begin{equation*}
J :=
\begin{pmatrix}
\frac{1}{\mu} (\frac{1}{M_+^2 \gamma} - 1)
&
\frac{1}{\mu M_+^2 \gamma}
\\
\frac{1}{\kappa M_+^2 \gamma}
&
- \frac{\cv}{\kappa M_+^2}
\end{pmatrix},
\end{equation*}
and $\fbar$ and $\gbar$ are nonlinear terms defined by
\begin{gather}
\quad
\fbar (\ubar, \tbar)
:=
- \frac{\ubar (\ubar + \tbar)}{\mu M_+^2 \gamma (\ubar - 1)},
\quad
\gbar (\ubar, \tbar)
:=
\frac{\ubar^2}{2 \kappa}.
\nonumber
\end{gather}
Boundary conditions for $(\ubar,\tbar)$ are derived from
(\ref{st-bc}) as
\begin{equation}
(\ubar,\tbar)(0)
= (\ub + 1, \tb - 1),
\quad
\lim_{x \to \infty}
(\ubar, \tbar)(x)
= (0,0).
\label{ode-bc}
\end{equation}
To prove the existence of the stationary solution $(\rt,\ut,\ttt)$,
it suffices to show the existence of the solution $(\ubar,\tbar)$ to
the boundary value problem (\ref{ode-eq}) and (\ref{ode-bc}).
To this end, we diagonalize the system (\ref{ode-eq}).
Let $\lambda_1$ and $\lambda_2$ be eigenvalues of the 
Jacobian matrix $J$.
Since we see later that $J$ has real eigenvalues,
we assume $\lambda_1 \ge \lambda_2$.
Let $r_1$ and $r_2$ be eigenvectors of $J$ corresponding to
$\lambda_1$ and $\lambda_2$, respectively, and let
 $P := (r_1, r_2)$ be a matrix.
Furthermore, using the matrix $P$,
we employ new unknown functions $U(x)$ and $\ttheta(x)$
defined by
\begin{equation}
\begin{pmatrix}
U(x) \\ \ttheta(x)
\end{pmatrix}
:=
P^{-1} 
\begin{pmatrix}
\ubar(x) \\ \tbar(x)
\end{pmatrix}.
\label{tr-ut}
\end{equation}
We also define a corresponding boundary data and nonlinear terms by
\[
\begin{pmatrix}
\uub \\ \ttb
\end{pmatrix}
:=
P^{-1}
\begin{pmatrix}
\ub +1 
\\
\tb -1
\end{pmatrix},
\quad
\begin{pmatrix}
f(U,\ttheta)
\\
g(U,\ttheta)
\end{pmatrix}
:=
P^{-1}
\begin{pmatrix}
\fbar(\ubar,\tbar)
\\
\gbar(\ubar,\tbar)
\end{pmatrix}.
\]
Using these notations,
we rewrite the problem (\ref{ode-eq}) and (\ref{ode-bc}) 
in a diagonal form as
\begin{gather}
\frac{d}{dx}
\begin{pmatrix}
U \\ \ttheta
\end{pmatrix}
=
\begin{pmatrix}
\lambda_1 & 0
\\
0 & \lambda_2
\end{pmatrix}
\begin{pmatrix}
U \\ \ttheta
\end{pmatrix}
+
\begin{pmatrix}
f(U,\ttheta)
\\
g(U,\ttheta)
\end{pmatrix},
\label{d-ode-eq}
\\
(U,\ttheta)(0)
=
(\uub, \ttb),
\quad
\lim_{x \to \infty}
(U, \ttheta)(x)
= 
(0,0).
\label{d-ode-bc}
\end{gather}
Since the existence of the solution to the problem (\ref{ste})
and (\ref{st-bc}) follows from that to the problem (\ref{d-ode-eq})
and (\ref{d-ode-bc}),
here we  show the latter.
%Hereafter, we study a solvability to the problem (\ref{d-ode-eq})
%and (\ref{d-ode-bc}).
Firstly, we consider the case $M_+ > 1$.
Since a discriminant of an eigen-equation of the matrix $J$ satisfies
\[
(\tr J)^2 - 4 \det J
=
(b-c)^2 + a^2 + 2ab + 2ca > 0,
\]
where $a$, $b$ and $c$ are constants defined by
\[
a
:=
\frac{\gamma-1}{\mu M_+^2 \gamma},
\quad
b 
:=
\frac{M_+^2-1}{\mu M_+^2},
\quad
c
:=
\frac{\cv}{\kappa M_+^2},
\]
the eigenvalues $\lambda_1$ and $\lambda_2$ are real numbers.
Moreover we see
\[
\lambda_1 + \lambda_2
= \tr J
= 
-(a+b+c)
< 0,
\quad
\lambda_1 \lambda_2
= \det J
= bc > 0,
\]
which show that $\lambda_1 < 0$ and $\lambda_2 < 0$.
Thus, the equilibrium point $(0,0)$ of (\ref{d-ode-eq}) is
asymptotically stable.
Consequently, if $|(\uub,\ttb)|$ is sufficiently small,
the problem (\ref{d-ode-eq}) and (\ref{d-ode-bc}) has a unique
smooth solution $(U,\ttheta)$ satisfying
\begin{equation}
|\pd_x^k (U(x), \ttheta(x))|
\le
C \dels e^{-cx}
\ \; \text{for} \; \
k = 0,1,\dots.
\label{exp-dc}
\end{equation}

Next we study the case $M_+ = 1$.
Since the matrix $J$ satisfies
\[
\tr J = - \frac{\cv d}{\mu \kappa} < 0,
\quad
\det J = 0,
\quad
d
:=
\mu + \kappa (\gamma-1)^2,
%\sigma := \frac{\gamma-1}{\mu \gamma} + \frac{\cv}{\kappa},
\]
the eigenvalues of $J$ are $\lambda_1 = 0$ and 
$\lambda_2 = - \cv d / (\mu \kappa)$
of which
 eigenvectors are explicitly given by
\[
r_1 =
\begin{pmatrix}
-1 \\ 1 - \gamma
\end{pmatrix},
\quad
r_2 =
\begin{pmatrix}
\kappa(1-\gamma)
\\
\mu
\end{pmatrix},
\]
respectively.
Notice that the matrix $P = (r_1,r_2)$ satisfies 
$\det P = -d < 0$.
Thus there exist a local center manifold $\ttheta = \hcm (U)$
and a local stable manifold  $U = \hsm (\ttheta)$
corresponding to the eigenvalues $\lambda_1=0$ and 
$\lambda_2 = - \cv d / (\mu \kappa)$,
respectively.
In order to show the existence of the solution,
we have to examine dynamics on the center manifold.
To this end, 
we employ a solution $\zt = \zt(x)$ to (\ref{d-ode-eq}) restricted on
the center manifold satisfying the equation
\begin{equation}
\zt_x = f(\zt, \hcm(\zt)).
\label{eq-z}
\end{equation}
By virtue of the center manifold theory  in \cite{carr},
there exists a solution $\zt$ to  (\ref{eq-z}) 
such that the solution $(U,\ttheta)$ to (\ref{d-ode-eq}) 
and (\ref{d-ode-bc}) is given by
\begin{gather}
U(x) = \zt(x) + O(\dels e^{-cx}),
\label{uz}
\\
\ttheta(x) = \hcm(\zt(x)) + O(\dels e^{-cx}).
\label{tz}
\end{gather}
Therefore, to obtain the solution $(U,\ttheta)$ to (\ref{d-ode-eq})
and (\ref{d-ode-bc}),
it suffices to show the existence of the solution to (\ref{eq-z})
satisfying $\zt(x) \to 0$ as $x \to \infty$.
We see that the nonlinear terms $f$ and $g$ satisfy
\begin{gather}
f(U,\ttheta)
=
- \frac{\gamma+1}{2 d} U^2
+ 
O \left(
|U|^3 + |U \ttheta| + |\ttheta|^2
\right),
\label{fg1}
\\
g(U, \ttheta)
=
%\frac{1}{d}
%\biggl(
%\frac{1-\gamma}{\mu} + \frac{1}{2 \kappa}
%\biggr)
\frac{\gamma-1}{2 \mu d} (\pr - 2)
U^2
+ 
O \left(
|U|^3 + |U \ttheta| + |\ttheta|^2
\right).
\label{fg2}
\end{gather}
Substituting (\ref{fg1}) in (\ref{eq-z}),
we deduce (\ref{eq-z}) to
\begin{equation}
\zt_x
=
- \frac{\gamma+1}{2 d} \zt^2 + O(|\zt|^3),
\label{eq-z2}
\end{equation}
which yields that $\zt$ is monotonically decreasing
for sufficiently small $\zt$.
Thus, to satisfy $\zt(x) \to 0$ as $x \to \infty$,
the boundary data $\zt(0)$ should be positive.
Namely, for the existence of the solution $(U,\ttheta)$,
the boundary data 
$(\uub,\ttb)$ should be located in the right region
from the local stable manifold, that is,
 $(\uub,\ttb)$ should satisfy a condition
\begin{equation}
\uub \ge \hsm(\ttb).
%\quad
%\begin{pmatrix}
%\uub \\ \ttb
%\end{pmatrix}
%=
%P^{-1}
%\begin{pmatrix}
%\ub +1 
%\\
%\tb -1
%\end{pmatrix}.
\label{sm-tr}
\end{equation}
From (\ref{eq-z2}), we also see that the solution $\zt$ satisfies
\begin{equation}
0 < c \frac{\dels}{1 + \dels x}
\le
\zt(x)
\le
C \frac{\dels}{1 + \dels x},
\quad
|\pd_x^k \zt(x)|
\le
C \frac{\dels^{k+1}}{(1+\dels x)^{k+1}}.
\label{zt-1}
\end{equation}
Combining (\ref{uz}), (\ref{tz}) and (\ref{zt-1})
with using $\hcm (\zt) = O(\zt^2)$,
we have the decay property of $(U,\ttheta)$:
\begin{equation}
|\pd_x^k (U(x),\ttheta(x))|
\le
C \frac{\dels^{k+1}}{(1+ \dels x)^{k+1}}
+ C \dels e^{-cx}
\ \; \text{for} \; \
k = 0,1,\dots.
\label{pol-dc}
\end{equation}

Finally we prove the existence of the solution to (\ref{d-ode-eq})
and (\ref{d-ode-bc}) for the subsonic case $M_+ < 1$.
For this case,
the eigenvalues of the matrix $J$ are $\lambda_1 > 0$ and
$\lambda_2 < 0$,
 so that there exist a local unstable manifold
and a local stable manifold.
Therefore, the problem (\ref{d-ode-eq}) and (\ref{d-ode-bc}) has
a solution $(U,\ttheta)$ satisfying (\ref{exp-dc})
if the boundary data is located on the stable
manifold, that is,
\begin{equation}
\uub = \hsm (\ttb).
\label{sm-sub}
\end{equation}

We summarize the above observation in Lemma \ref{ex-dode}
as the existence result to the problem (\ref{d-ode-eq})
and (\ref{d-ode-bc}).

\begin{lemma}
\label{ex-dode}
Suppose that $|(\uub,\ttb)|$ is sufficiently small.

\vspace{-2mm}
\begin{enumerate}[\hspace{-7pt}\rm (i)]
\setlength{\itemsep}{0pt}

\item
For the supersonic case $M_+ > 1$,
there exists a unique smooth solution $(U,\ttheta)$ to the problem
\eqref{d-ode-eq} and \eqref{d-ode-bc} satisfying \eqref{exp-dc}.

\item
For the transonic case $M_+ = 1$,
 if the boundary data $(\uub,\ttb)$ satisfies \eqref{sm-tr},
there exists a unique smooth solution $(U,\ttheta)$ satisfying
\eqref{pol-dc}.

\item
For the subsonic case $M_+ < 1$,
 if the boundary data $(\uub,\ttb)$ satisfies \eqref{sm-sub},
there exists a unique smooth solution $(U,\ttheta)$ satisfying
\eqref{exp-dc}.

\end{enumerate}
\end{lemma}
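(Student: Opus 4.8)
The three assertions are obtained by assembling the case-by-case analysis of the equilibrium $(0,0)$ of the diagonal system \eqref{d-ode-eq} carried out above. For each value of the Mach number $M_+$ the plan is to record the signs of the eigenvalues $\lambda_1\ge\lambda_2$ of the Jacobian $J$ and then to invoke the appropriate invariant-manifold theorem in order to single out, among the local trajectories, the one that meets the spatial asymptotic condition $\lim_{x\to\infty}(U,\ttheta)(x)=(0,0)$ prescribed in \eqref{d-ode-bc}; the decay estimate then comes from the linearization, except in the transonic case where it comes from the reduced flow.

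For the supersonic case $M_+>1$ the eigenvalues satisfy $\lambda_1,\lambda_2<0$, so $(0,0)$ is a hyperbolic sink. Standard asymptotic-stability theory then shows that, once $|(\uub,\ttb)|$ is small, the forward trajectory issuing from $(\uub,\ttb)$ remains in a small neighborhood of the origin and converges to it at an exponential rate, which gives existence, uniqueness and the bound \eqref{exp-dc}. The subsonic case $M_+<1$ is the saddle case $\lambda_1>0>\lambda_2$: here the stable manifold theorem provides the one-dimensional local stable manifold $U=\hsm(\ttheta)$, and a trajectory tends to $0$ as $x\to\infty$ exactly when its starting point lies on this manifold, that is, when \eqref{sm-sub} holds; uniqueness and \eqref{exp-dc} follow once more from the contraction in the stable direction $\lambda_2<0$.

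The transonic case $M_+=1$ is the substantive one, and this is where I expect the genuine difficulty to lie, since $\lambda_1=0$ yields a center direction for which the linear theory is inconclusive. The plan is to apply center manifold theory (\cite{carr}) to obtain the local center manifold $\ttheta=\hcm(U)$, together with the asymptotic representation \eqref{uz}--\eqref{tz} that writes the full solution as the center-manifold orbit $\zt$ plus an exponentially small remainder, so that everything reduces to the scalar equation \eqref{eq-z}. The decisive step is the qualitative study of \eqref{eq-z2}: the quadratic coefficient $-\frac{\gamma+1}{2d}$ is negative (as $\gamma>1$ and $d>0$), so $\zt$ is monotonically decreasing for small data and returns to $0$ as $x\to\infty$ only when $\zt(0)>0$, a sign condition that translates into the admissibility requirement \eqref{sm-tr} on $(\uub,\ttb)$. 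Integrating the Riccati-type equation \eqref{eq-z2} then yields the algebraic rate recorded in \eqref{zt-1}, and combining it with \eqref{uz}, \eqref{tz} and the relation $\hcm(\zt)=O(\zt^2)$ produces the mixed algebraic-exponential decay \eqref{pol-dc}, which establishes part (ii) and completes the proof.
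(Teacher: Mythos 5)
Your proposal is correct and follows essentially the same route as the paper: case-by-case eigenvalue analysis of $J$ (sink for $M_+>1$, saddle with the stable-manifold condition \eqref{sm-sub} for $M_+<1$), and for $M_+=1$ the center manifold reduction of \cite{carr} to the scalar Riccati-type equation \eqref{eq-z2}, whose sign condition $\zt(0)\ge 0$ yields \eqref{sm-tr} and whose integration yields \eqref{zt-1} and hence \eqref{pol-dc}. The only difference is one of detail, not of method: the paper verifies the eigenvalue signs explicitly via the discriminant, trace and determinant of $J$, which your sketch takes for granted.
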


The proof of Proposition \ref{pro-ex} 
follows immediately form Lemma \ref{ex-dode}.
Indeed, by using the conditions (\ref{sm-tr}) and (\ref{sm-sub}),
we precisely define the regions $\mnz$ and $\mnn$ 
 in Proposition \ref{pro-ex} as follows.
Define $\hat{U}(u,\theta)$ and $\hat{\ttheta}(u,\theta)$ by
\begin{equation}
\begin{pmatrix}
\hat{U}(u,\theta) 
\\ 
\hat{\ttheta} (u,\theta)
\end{pmatrix}
:=
P^{-1} 
\begin{pmatrix}
u+1 
\\
\theta-1
\end{pmatrix}.
\label{sm-lg}
\end{equation}
Note that $U(x) = \hat{U}(\ut(x),\ttt(x))$ and 
$\ttheta(x) = \hat{\ttheta} (\ut(x), \ttt(x))$ hold
from (\ref{tr-ut}).
Then,  defining the regions $\mnz$ and $\mnn$ by
\begin{equation}
\begin{aligned}
\mnz
 & :=
\bigl\{
(u,\theta) \in \mnp
\; ; \;
\hat{U} (u,\theta) \ge \hsm \bigl( \hat{\ttheta} (u,\theta) \bigr)
\bigr\},
\\
\mnn
 & :=
\bigl\{
(u,\theta) \in \mnp
\; ; \;
\hat{U} (u,\theta) = \hsm \bigl( \hat{\ttheta} (u,\theta) \bigr)
\bigr\},
\end{aligned}
\label{def-mn}
\end{equation}
we see that 
the conditions (\ref{sm-tr}) and (\ref{sm-sub}) are
equivalent to (\ref{bc-mnz}) and (\ref{bc-mnn}), respectively.

\subsection{Estimates for degenerate stationary solution}
\label{st-deg}

The aim of the present section is to obtain
more delicate estimates of the degenerate stationary
solution,
which will be utilized in deriving {\it a priori} estimates of
 the perturbation from the degenerate stationary solution
for the case $M_+ = 1$.

\begin{lemma}
Suppose that the degenerate stationary solution exists.
Namely, the same conditions as in Proposition {\rm \ref{pro-ex} - (ii)}
are supposed to hold.
Then the degenerate stationary solution $(\rt,\ut,\ttt)$ satisfies
\begin{gather}
(\rt,\ut,\ttt)
=
(1,-1,1)
+ (-1,-1,1-\gamma) \zt
+ O(\zt^2 + \dels e^{-cx}),
\label{dst-1}
\\
(\ut_x,\ttt_x)
=
\frac{\gamma+1}{2 d} (1, \gamma-1) \zt^2
+ O(\zt^3 + \dels e^{-cx}),
\label{dst-2}
\\
|\pd_x^k (\ut,\ttt)|
\le
C \zt^{k+1}
+ C \dels e^{-cx}
\; \ \text{for} \ \;
k = 1,2,\dots.
\label{dst-3}
\end{gather}
\end{lemma}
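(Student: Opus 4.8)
The plan is to reduce all three identities to the diagonalized variables $(U,\ttheta)$ of Section~\ref{st-prob} and to the center--manifold coordinate $\zt$. Writing $(\ut,\ttt)=(-1,1)+(\ubar,\tbar)$ and using $(\ubar,\tbar)^\trp=P(U,\ttheta)^\trp=U r_1+\ttheta r_2$ together with $\rt=-1/\ut$ from \eqref{ss01}, every quantity is expressed through $(U,\ttheta)$. The inputs I would use are the center--manifold representation \eqref{uz}--\eqref{tz},
\[
U=\zt+O(\dels e^{-cx}),\qquad \ttheta=\hcm(\zt)+O(\dels e^{-cx}),\qquad \hcm(\zt)=O(\zt^2),
\]
the reduced equation \eqref{eq-z}, and the sharp bounds \eqref{zt-1}. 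The one auxiliary fact I would record first is that the two-sided bound in \eqref{zt-1} turns derivative bounds on $\zt$ into powers of $\zt$: since $\zt\ge c\dels/(1+\dels x)$ and $|\pd_x^k\zt|\le C(\dels/(1+\dels x))^{k+1}$, we get $|\pd_x^k\zt|\le C\zt^{k+1}$ for every $k$.

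For \eqref{dst-1} I would substitute the decomposition into $\ubar r_1+\ttheta r_2$. Because $\ttheta=O(\zt^2+\dels e^{-cx})$, the leading part is $\zt r_1$, and with $r_1=(-1,1-\gamma)^\trp$ this gives $\ut=-1-\zt+O(\zt^2+\dels e^{-cx})$ and $\ttt=1+(1-\gamma)\zt+O(\zt^2+\dels e^{-cx})$; expanding $\rt=-1/\ut=1-\zt+O(\zt^2+\dels e^{-cx})$ completes \eqref{dst-1}. For \eqref{dst-2} I would differentiate once. From $U_x=f(U,\ttheta)$ and \eqref{fg1} one gets $U_x=-\frac{\gamma+1}{2d}\zt^2+O(\zt^3+\dels e^{-cx})$, consistent with \eqref{eq-z2}. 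The delicate point is $\ttheta_x$: from $\ttheta_x=\lambda_2\ttheta+g$ the naive size is only $O(\zt^2)$, but the quadratic terms cancel. I would exhibit this through the invariance relation $\hcm'(\zt)f(\zt,\hcm(\zt))=\lambda_2\hcm(\zt)+g(\zt,\hcm(\zt))$ for the center manifold, which forces $\ttheta_x=\hcm'(\zt)\zt_x+O(\dels e^{-cx})=O(\zt^3+\dels e^{-cx})$. Transforming $U_x r_1+\ttheta_x r_2$ back then produces the coefficient $\frac{\gamma+1}{2d}(1,\gamma-1)$ of $\zt^2$ in \eqref{dst-2}.

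For \eqref{dst-3} I would work order by order in $k$. Since $\pd_x^k(\ut,\ttt)^\trp=(\pd_x^kU)r_1+(\pd_x^k\ttheta)r_2$ for $k\ge1$, it suffices to bound $\pd_x^kU$ and $\pd_x^k\ttheta$ by $C\zt^{k+1}+C\dels e^{-cx}$. Writing $U=\zt+p$ and $\ttheta=\hcm(\zt)+q$ with $p,q=O(\dels e^{-cx})$, the on-manifold parts are controlled by the conversion above: $\pd_x^k\zt=O(\zt^{k+1})$, and since $\hcm$ has a double zero at $\zt=0$, the Fa\`a di Bruno formula gives $\pd_x^k\hcm(\zt)=O(\zt^{k+2})$, a fortiori $O(\zt^{k+1})$. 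The remaining task is to show that the off-manifold remainders decay exponentially together with all their derivatives, i.e.\ $\pd_x^kp,\pd_x^kq=O(\dels e^{-cx})$.

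The main obstacle I anticipate is precisely this last step: differentiating the $(U,\ttheta)$ system directly does not reveal the cancellation responsible for the extra power of $\zt$ in $\ttheta$, so I would not estimate $\pd_x^k\ttheta$ from its own equation. Instead I would derive the equations for the remainders, $p_x=f(U,\ttheta)-f(\zt,\hcm(\zt))$ and $q_x=\lambda_2 q+\{g(U,\ttheta)-g(\zt,\hcm(\zt))\}$, and note that, because $f$ and $g$ have gradients of size $O(\zt+\dels e^{-cx})$ near the manifold, both right-hand sides are $O(\zt+\dels e^{-cx})(|p|+|q|)+O(|p|^2+|q|^2)$ apart from the good stable term $\lambda_2 q$. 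Starting from $p,q=O(\dels e^{-cx})$ this yields $p_x,q_x=O(\dels e^{-cx})$, and differentiating repeatedly---using that the coefficients are smooth functions of $\zt$ whose $x$-derivatives are bounded by \eqref{zt-1}---bootstraps the bound to all orders by induction on $k$. Collecting the on- and off-manifold contributions and transforming back through $P$ then gives \eqref{dst-3}.
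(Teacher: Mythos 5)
Your proposal is correct and takes essentially the same route as the paper: both reduce everything to the diagonalized variables $(U,\ttheta)$, use the center-manifold representation \eqref{uz}--\eqref{tz} together with \eqref{eq-z2} and \eqref{zt-1} to get \eqref{uxtx}-type bounds, and transform back through $P$ via \eqref{utou}, with \eqref{dst-3} obtained by exactly the induction on $|\pd_x^k(U,\ttheta)|$ that you spell out. The only difference is one of detail: where the paper appeals to center manifold theory for the fact that the off-manifold errors decay like $\dels e^{-cx}$ together with all their derivatives, you re-derive this from the remainder equations $p_x=f(U,\ttheta)-f(\zt,\hcm(\zt))$ and $q_x=\lambda_2 q+\{g(U,\ttheta)-g(\zt,\hcm(\zt))\}$, which is a sound and more self-contained justification of the step the paper leaves implicit.
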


\begin{proof}
The estimates for $(\ut,\ttt)$ in (\ref{dst-1}) are obtained
by using (\ref{uz}), (\ref{tz}) and
\begin{equation}
\begin{pmatrix}
\ut
\\
\ttt
\end{pmatrix}
=
\begin{pmatrix}
-1
\\
1
\end{pmatrix}
+
P
\begin{pmatrix}
U
\\
\ttheta
\end{pmatrix},
\quad
P=
\begin{pmatrix}
-1 & \kappa (1 - \gamma)
\\
1 - \gamma & \mu
\end{pmatrix}
\label{utou}
\end{equation}
which follows from (\ref{tr-ut}).
Due to the fact that $\rt \ut = -1$,
we have the estimate for $\rt$ in (\ref{dst-1}).
By using (\ref{uz}), (\ref{tz}) and (\ref{eq-z2}),
we see that
\begin{equation}
U_x 
= -\frac{\gamma+1}{2d} \zt^2
+ O(\zt^3 + \dels e^{-cx}),
\quad
\ttheta_x
= O(\zt^3 + \dels e^{-cx}).
\label{uxtx}
\end{equation}
Differentiating (\ref{utou}) in $x$ and substituting (\ref{uxtx})
yield the desired estimate (\ref{dst-2}).
We also have the estimates
 $|\pd_x^k (U,\ttheta)| = O(\zt^{k+1} + \dels e^{-cx})$
inductively, which give the estimate (\ref{dst-3}) due to (\ref{utou}).
Therefore we complete the proof.
\end{proof}

\subsection{Local structure of invariant manifolds}
\label{st-mani}

In order to verify the conditions (\ref{sm-tr}) and (\ref{sm-sub}),
which ensure the existence of the stationary solution, 
it is important to make clear the local shapes of 
the invariant manifolds
$\hcm$ and $\hsm$.
In the present section, we 
  focus ourselves on the transonic case $M_+ = 1$ and 
%study the local structure of $\hcm$ and $\hsm$. 
show that the geometric properties of the invariant manifolds are
characterized by the Prandtl number.
In  detailed arguments, we follow an idea
 in \cite{carr}.
Precisely, we approximate $\hcm$ and $\hsm$ by 
polynomial functions around the equilibrium point as
\begin{equation}
\begin{aligned}
\hcm (U) 
& = 
c_2 U^2 + c_3 U^3 + O(U^4),
\\
\hsm (\ttheta)
& =
s_2 \ttheta^2 + s_3 \ttheta^3 + O(\ttheta^4)
\end{aligned}
\label{loc-mani}
\end{equation}
and obtain precise expressions of the constants $c_i$ and $s_i$
$(i=2,3)$.

Firstly we treat the center manifold $\hcm$.
Differentiating the relation $\ttheta = \hcm (U)$ in $x$,
we have
\begin{equation}
\ttheta_x = (\hcm)'(U) U_x.
\label{mn01}
\end{equation}
Substituting the equation (\ref{d-ode-eq}) in (\ref{mn01}) and
using the relation $\ttheta = \hcm(U)$ again,
we have
\begin{equation}
\lambda_2 \hcm (U) + g(U, \hcm(U))
=
(\hcm)'(U) f(U,\hcm(U)),
\label{mn02}
\end{equation}
where we have used $\lambda_1 = 0$.
Substituting $\lambda_2 = - \cv d / (\mu \kappa)$ and (\ref{fg2})
in (\ref{mn02})
and using the equalities  $$\ttheta = \hcm(U) = O(U^2),\quad
(\hcm)'(U) = O(|U|),\quad  f(U,\hcm(U)) = O(U^2),$$
we get the second order approximation of $\hcm$:
\[
\hcm (U)
=
- \frac{1}{\lambda_2} g(U, \hcm(U)) + O(|U|^3)
=
\frac{\gamma (\gamma-1)^2 \kappa}{2 d^2} (\pr -2) U^2
+ O(|U|^3).
\]
This 
approximation
 means  $c_2$ is given by
\[
c_2 =
\frac{\gamma (\gamma-1)^2 \kappa}{2 d^2} (\pr -2).
\]
For the case of $\pr = 2$, that is, $c_2 = 0$,
we compute $c_3$  similarly as above and get 
\[
c_3 = \frac{\gamma (\gamma-1)^2 \kappa}{d^2} > 0.
\]

Next we obtain $s_2$ and $s_3$.
Differentiating $U = \hsm(\ttheta)$ in $x$ and substituting (\ref{d-ode-eq})
in the resultant equality, we have
\begin{equation}
f(\hsm(\ttheta), \ttheta)
=
(\hsm)'(\ttheta)
\bigl(
\lambda_2 \ttheta + g(\hsm(\ttheta), \ttheta)
\bigr).
\label{mn03}
\end{equation}
Substituting $(\hsm)'(\ttheta) = 2 s_2 \ttheta + O(\ttheta^2)$,
$g(\hsm(\ttheta),\ttheta) = O(\ttheta^2)$ and
\[
f(\hsm(\ttheta), \ttheta)
=
\frac{(\gamma-1)^2 \kappa^2}{\gamma d} (\pr - \gamma_*)\ttheta^2
+ O(|\ttheta|^3),
\quad
\gamma_*
:=
\frac{1}{2} (\gamma^2 - \gamma + 2) > 1
\]
in (\ref{mn03}), we have 
\[
s_2
=
- \frac{(\gamma-1)^3 \mu \kappa^3}{2 d^2}
(\pr - \gamma_*).
\]
If $\pr = \gamma_*$, that is, $s_2 = 0$,
we also compute $s_3$ in the same way:
\[
s_3
=
\frac{\gamma (\gamma-1)^5 \mu \kappa^4}{6 d^2} > 0.
\]

Summarizing the above observation,  we have
\begin{lemma}
Suppose that $M_+=1$ holds.\\
{\rm (i)}
The local center manifold $\ttheta = \hcm(U) = c_2 U^2 +
c_3 U^3 + O(U^4)$ satisfies
$c_2 \gtreqless 0$ if and only if $\pr \gtreqless 2$.
Especially, if $\pr = 2$, i.e., $c_2 = 0$, 
the coefficient $c_3$ is positive.
\\
{\rm (ii)}
The local stable manifold $U = \hsm(\ttheta) = s_2 \ttheta^2 +
s_3 \ttheta^3 + O(\ttheta^4)$ satisfies
$s_2 \gtreqless 0$ if and only if $\pr \lesseqgtr \gamma_* :=
(\gamma^2-\gamma+2)/2$.
Especially, if $\pr = \gamma_*$, i.e., $s_2 = 0$, 
the coefficient $s_3$ is positive.
\end{lemma}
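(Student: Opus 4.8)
The plan is to read the two assertions directly off the explicit second-order coefficients already obtained above, since the substantive work has been done: the invariance relations \eqref{mn02} and \eqref{mn03} were set up, and the nonlinearities were expanded via \eqref{fg1} and \eqref{fg2}. Thus the proof reduces to a sign inspection resting on the positivity of the physical constants, namely $\gamma>1$, $\mu>0$, $\kappa>0$, and consequently $d=\mu+\kappa(\gamma-1)^2>0$.

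For part (i), I would start from the computed expression
\[
c_2=\frac{\gamma(\gamma-1)^2\kappa}{2d^2}(\pr-2).
\]
The factor multiplying $\pr-2$ is a product of strictly positive quantities, hence positive, so $c_2$ carries the same sign as $\pr-2$; this is exactly $c_2\gtreqless 0$ if and only if $\pr\gtreqless 2$. In the borderline case $\pr=2$ (so $c_2=0$) the relevant coefficient is the cubic one, and the formula $c_3=\gamma(\gamma-1)^2\kappa/d^2$ is manifestly positive.

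For part (ii), I would argue identically from
\[
s_2=-\frac{(\gamma-1)^3\mu\kappa^3}{2d^2}(\pr-\gamma_*),
\quad
\gamma_*=\tfrac12(\gamma^2-\gamma+2).
\]
Here the prefactor of $\pr-\gamma_*$ is strictly negative, so the sign of $s_2$ is opposite to that of $\pr-\gamma_*$; this yields $s_2\gtreqless 0$ if and only if $\pr\lesseqgtr\gamma_*$. On the degenerate locus $\pr=\gamma_*$ (so $s_2=0$) one passes to the cubic term, and $s_3=\gamma(\gamma-1)^5\mu\kappa^4/(6d^2)>0$ settles the sign.

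No genuine analytic obstacle remains at this stage; the points requiring care lie upstream of the statement and should be double-checked, since the whole lemma hinges on them. Specifically, one must verify that the approximation scheme of \cite{carr} legitimately produces the coefficients by matching like powers of $U$ (resp.\ $\ttheta$) in \eqref{mn02} and \eqref{mn03}, and that the quadratic and cubic coefficients of $f$ and $g$ in the diagonalizing coordinates were computed correctly, because a sign error there would propagate into the thresholds $2$ and $\gamma_*$. Granted those expansions, the lemma is immediate.
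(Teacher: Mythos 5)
Your proposal is correct and follows essentially the same route as the paper: the paper's own proof of this lemma is precisely the sign inspection of the explicit coefficients $c_2$, $c_3$, $s_2$, $s_3$, which it derives immediately beforehand from the invariance relations \eqref{mn02} and \eqref{mn03} together with the expansions \eqref{fg1}--\eqref{fg2}, using $\gamma>1$, $\mu,\kappa>0$ and $d=\mu+\kappa(\gamma-1)^2>0$ exactly as you do. The upstream checks you flag (matching powers in the invariance equations and the correctness of the expansions of $f$ and $g$ in the diagonalized coordinates) are indeed where the substantive work lies, and they are carried out in the paper just before the lemma is stated.
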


From the local structure of the invariant manifolds 
in the diagonalized coordinate $(U,\ttheta)$,
we  obtain detailed information on the local structure of invariant manifolds
in the original coordinate $(u,\theta)$.
Let $\theta = \thcm(u)$ and $\theta = \thsm(u)$ be
a local center manifold and a local stable manifold
in the coordinate $(u,\theta)$,
respectively (also see  Figure \ref{fig-mani}).
Then we see that the relations
$\theta = \thcm(u)$ and $\theta = \thsm(u)$ 
are equivalent to
\begin{equation}
\hat{\ttheta} (u,\theta) = \hcm \bigl( \hat{U} (u,\theta) \bigr)
\quad \text{and} \quad
\hat{U} (u,\theta) = \hsm \bigl( \hat{\ttheta} (u,\theta) \bigr),
\label{orig-mani}
\end{equation}
respectively.
Therefore,
substituting (\ref{sm-lg}) and (\ref{loc-mani}) in (\ref{orig-mani})
and solving the resultant equation
with respect to $\theta$,
we get 
\begin{align*}
\thcm(u)
& =
1 + (\gamma-1) (u+1)
+ \frac{\gamma (\gamma-1)}{2(\pr + \gamma - 1)} (\pr - 2)
(u+1)^2
+ O(|u+1|^3),
\\
\thsm(u)
& =
1 - \pr (u+1)
+ \frac{\pr}{2(\pr + \gamma - 1)} (\pr - \gamma_*)
(u+1)^2
+ O(|u+1|^3).
\end{align*}
Especially, if $\pr = 2$
the local center manifold $\theta=\thcm(u)$ satisfies
\[
\thcm(u)
=
1 + (\gamma-1) (u+1)
- \frac{\gamma(\gamma-1)}{\pr + \gamma - 1}
(u+1)^3
+ O(|u+1|^4),
\]
while
the local stable manifold $\theta=\thsm(u)$ satisfies
\[
\thsm(u)
=
1 - \pr (u+1)
+ \frac{\gamma (\gamma-1) \pr}{6(\pr + \gamma - 1)}
(u+1)^3
+ O(|u+1|^4)
\]
if $\pr = \gamma_*$.

\section{Energy estimate}
\label{est}

In this section, we  prove  Theorem \ref{th-sb}.
The crucial point of the proof is a derivation of {\it a priori}
estimates for a perturbation from the stationary solution
\[
(\vp,\psi,\chi)(t,x)
:=
(\rho,u,\theta)(t,x)
-
(\rt,\ut,\ttt)(x)
\]
in the Sobolev space $H^1$.
Using (\ref{d-nse}) and (\ref{ste}), 
we have the system of equations for $(\vp,\psi,\chi)$ as
\begin{subequations}
\label{pt-eq}
\begin{gather}
\vp_t + u \vp_x + \rho \psi_x
=
- (\ut_x \vp + \rt_x \psi),
\label{pt-eq1}
\\
\rho (\psi_t + u \psi_x)
+ \frac{1}{M_+^2} (p - \pt)_x
= \mu \psi_{xx}
- (\rho u - \rt \ut) \ut_x,
\label{pt-eq2}
\\
\frac{\cv}{M_+^2} \rho \chi_t
+ \frac{\cv}{M_+^2} (\rho u \theta_x - \rt \ut \ttt_x)
=
\kappa \chi_{xx}
+ \mu (u_x^2 - \ut_x^2)
- \frac{1}{M_+^2} (p u_x - \pt \ut_x).
\label{pt-eq3}
\end{gather}
\end{subequations}
The initial and the boundary conditions for $(\vp,\psi,\chi)$
follow from \eqref{ic} and \eqref{bc} as
\begin{gather}
(\vp,\psi,\chi)(0,x)
=
(\vp_0,\psi_0,\chi_0)(x)
:=
(\rho_0,u_0,\theta_0)(x)
- (\rt, \ut, \ttt)(x),
\label{pt-ic} 
\\
(\psi,\chi)(t,0)
= (0,0).
\label{pt-bc}
\end{gather}
Hereafter for simplicity, we often use 
the notations $\vvp := (\vp,\psi,\chi)^\trp$ and
$\vvp_0 := (\vp_0,\psi_0,\chi_0)^\trp$.

To show the existence of a solution to the problem (\ref{pt-eq}),
(\ref{pt-ic}) and (\ref{pt-bc}) locally in time,
we define a function space $X(0,T)$, for  $T>0$, by
\begin{align*}
X(0,T)
:=
\bigl\{
(\vp,\psi,\chi)
\; ; \;
&
\vp \in \holh,
\
(\psi, \chi) \in \holp,
\\
&
(\vp,\psi,\chi) 
\in C([0,T] ; H^1 (\R_+)),
\
\vp_x 
\in L^2(0,T ; L^2 (\R_+)),
\\
&
(\psi_x,\chi_x)
\in L^2(0,T ; H^1 (\R_+))
\bigr\},
\end{align*}
where $\sigma \in (0,1)$ is a constant.
We summarize the existence theorem in the following lemma,
which is proved by a standard iteration method.
%so that we omit the proof.

\begin{lemma}
\label{local-ext}
Suppose that the initial data satisfies
\[
\vp_0 \in \calb^{1+\sigma},
\
(\psi_0,\chi_0) \in \calb^{2+\sigma},
\quad
(\vp_0,\psi_0,\chi_0) \in H^1 (\R_+)
\]
for a certain $\sigma \in (0,1)$
and compatibility conditions of order $0$ and $1$.
Then there exists a positive constant $T_0$, depending
only on $\holx{1+\sigma}{\vp_0}$ and 
$\holx{2+\sigma}{(\psi_0,\chi_0)}$,
such that the problem \eqref{pt-eq}, \eqref{pt-ic} and \eqref{pt-bc}
has a unique solution $(\vp,\psi,\chi) \in X(0,T_0)$.
\end{lemma}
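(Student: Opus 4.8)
The plan is to construct the solution by a standard successive approximation that respects the mixed hyperbolic--parabolic structure of \eqref{pt-eq}: the density perturbation $\vp$ obeys the transport equation \eqref{pt-eq1}, which has no regularizing effect, whereas $\psi$ and $\chi$ obey the uniformly parabolic equations \eqref{pt-eq2} and \eqref{pt-eq3}. This is exactly why the solution space $X(0,T)$ assigns to $\vp$ only the lower parabolic H\"older class $\holh$, while $(\psi,\chi)$ is sought in $\holp$. First I would set up the iteration: take $(\vp^0,\psi^0,\chi^0)$ to be a fixed time-independent extension of $\vvp_0$, and, given the $n$-th iterate, put $(\rho^n,u^n,\theta^n):=(\rt,\ut,\ttt)+(\vp^n,\psi^n,\chi^n)$. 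The next iterate is defined by solving the \emph{linear} problem obtained from \eqref{pt-eq} by freezing the coefficients and the genuinely nonlinear terms at the $n$-th iterate: $\vp^{n+1}$ solves a linear transport equation with advection velocity $u^n$ and $\vp^{n+1}(0,\cdot)=\vp_0$, while $\psi^{n+1}$ and $\chi^{n+1}$ solve linear parabolic equations with leading coefficients $\mu/\rho^n$ and $\kappa M_+^2/(\cv\rho^n)$ under \eqref{pt-ic} and \eqref{pt-bc}. For $T$ small the positivity $\inf\rho^n>0$ and $\inf\theta^n>0$ persists, so each linearized equation is well posed; the transport equation is integrated along the characteristics of $u^n$ and the parabolic equations are solved by classical Schauder theory.

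Next I would prove that this map leaves invariant a ball $B_R\subset X(0,T_0)$ whose radius $R$ depends only on $\holx{1+\sigma}{\vp_0}$ and $\holx{2+\sigma}{(\psi_0,\chi_0)}$. For the transport equation, differentiating once in $x$ and propagating along characteristics bounds $\holtx{1+\sigma/2}{1+\sigma}{\vp^{n+1}}$ in terms of the $n$-th iterate, with the H\"older continuity up to the corner $(t,x)=(0,0)$ guaranteed by the order-$0$ and order-$1$ compatibility conditions (i.e.\ $\psi_0(0)=\chi_0(0)=0$ together with the relation at the corner obtained by differentiating \eqref{pt-bc} in $t$ and inserting the equations). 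For the parabolic equations, the parabolic Schauder estimate controls $\holtx{1+\sigma/2}{2+\sigma}{(\psi^{n+1},\chi^{n+1})}$ by the H\"older norms of the frozen coefficients and source terms, which are in turn dominated by $R$; here the source of the $\psi$-equation contains $\vp_x$ through the pressure gradient $(p-\pt)_x$, and this is admissible precisely because $\vp\in\holh$ makes $\vp_x$ H\"older continuous. Absorbing the small factors generated by a short time interval, one checks that for $T_0$ small enough the map sends $B_R$ into itself, the same computation simultaneously controlling the $C([0,T_0];H^1)$ and $L^2(0,T_0;H^1)$ parts of the $X$-norm by an energy argument.

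The step requiring the most care, and the main obstacle, is the contraction estimate, because no smoothing is available for $\vp$. I would estimate the differences $\delta\vp:=\vp^{n+1}-\vp^n$, $\delta\psi:=\psi^{n+1}-\psi^n$, $\delta\chi:=\chi^{n+1}-\chi^n$ in the \emph{weaker} metric $C([0,T_0];L^2)$, augmented by $L^2(0,T_0;H^1)$ for the parabolic components. Testing the difference of the transport equations against $\delta\vp$ bounds $\delta\vp$ in $C(L^2)$ by the $L^2(0,T_0;H^1)$-norm of the previous velocity and temperature differences, which enters through the $\psi_x$-type sources; testing the difference of the parabolic equations against $(\delta\psi,\delta\chi)$ and integrating by parts yields the $C(L^2)\cap L^2(H^1)$ control of $(\delta\psi,\delta\chi)$. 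The only dangerous coupling is the pressure gradient, which produces a term in $\delta\vp_x$ in the $\psi$-equation; I would eliminate the lost derivative by integrating by parts once more, transferring it onto $\delta\psi_x$, which is controlled by the parabolic dissipation. A factor of $T_0$ (or $\sqrt{T_0}$) in front of the coupling terms then makes the map a contraction for $T_0$ small, so the iterates converge. Passing to the limit identifies a solution of \eqref{pt-eq}, \eqref{pt-ic}, \eqref{pt-bc}; the full $X(0,T_0)$-regularity is recovered from the uniform $B_R$ bound (the ball being closed under the convergence at hand), and uniqueness follows from the very same difference estimate.
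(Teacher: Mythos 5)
Your proposal is correct in outline, but note that the paper itself gives no proof of this lemma at all: it is stated with the single remark that it ``is proved by a standard iteration method,'' so there is no written argument to match against. Your sketch is a legitimate implementation of such an iteration, and it is internally consistent: the splitting into a linear transport step for $\vp^{n+1}$ (solved along the characteristics of $u^n$, with no boundary condition needed since $u<0$ forces all backward characteristics to terminate on $\{t=0\}$) and parabolic Schauder steps for $(\psi^{n+1},\chi^{n+1})$, the ball-invariance in the H\"older part of the $X$-norm with $H^1$ bounds obtained afterwards by linear energy estimates (which is what makes $T_0$ depend only on $\holx{1+\sigma}{\vp_0}$ and $\holx{2+\sigma}{(\psi_0,\chi_0)}$), and the contraction in the weaker $C([0,T_0];L^2)$ metric with the lost derivative in $(p-\pt)_x$ transferred onto $\delta\psi_x$ by integration by parts (the boundary term vanishing because $\delta\psi(t,0)=0$) are all sound. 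The genuine difference from the route the paper indicates is coordinates: both here and in the proof of Theorem \ref{th-sb}, the authors (following \cite{knz03}) pass to the Lagrangian mass coordinate and invoke the parabolic Schauder theory of \cite{friedman64}. In Lagrangian variables the continuity equation becomes an equation for the specific volume that is integrated directly in time, so the hyperbolic component never requires a characteristics analysis and the whole system is handled by purely parabolic machinery; your Eulerian scheme buys the convenience of never changing variables (and hence of working in the same frame as the stationary solution and the perturbation system \eqref{pt-eq}), at the price of the extra care with the variable-coefficient transport equation and the corner compatibility that you correctly identify.
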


Next we show {\it a priori} estimates of the perturbation 
$(\vp,\psi,\chi)$ in the space $H^1$.
Here 
we utilize the Poincar\'e type inequality 
 in the next lemma.
Since this lemma is proved in the similar way to the 
paper \cite{knz03}, we omit the proof.
\begin{lemma}
For functions
$f \in H^1(\R_+)$ and $w \in L^1_1(\R_+)$, 
we have
\begin{equation}
\int_{\R_+} |w(x) f(x)^2| \, dx
\le
C \lpa{1}{1}{w}
 (f(0)^2 + \lt{f_x}^2).
\label{poin}
\end{equation}
\end{lemma}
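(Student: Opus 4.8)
The statement to prove is the weighted Poincar\'e inequality \eqref{poin}. The plan is to first establish a pointwise bound on $f(x)^2$ in terms of the boundary value $f(0)$ and the $L^2$ norm of $f_x$, with all the spatial growth carried by the single factor $(1+x)$, and then to integrate this bound against the weight $|w|$. Since the right-hand side of \eqref{poin} factors as $\lpa{1}{1}{w}$ times a quantity independent of $x$, such a pointwise estimate is exactly what is needed.

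First I would use that $f \in H^1(\R_+)$ admits an absolutely continuous representative (indeed $H^1(\R_+)$ embeds into the continuous functions on $\overline{\R_+}$), so that the trace $f(0)$ is well-defined and the representation $f(x) = f(0) + \int_0^x f_x(y)\,dy$ holds for every $x \ge 0$. Applying the elementary inequality $(a+b)^2 \le 2a^2 + 2b^2$ followed by the Cauchy--Schwarz inequality then gives
\[
f(x)^2
\le
2 f(0)^2 + 2\Bigl( \int_0^x f_x(y)\,dy \Bigr)^2
\le
2 f(0)^2 + 2x \int_0^x f_x(y)^2\,dy
\le
2 f(0)^2 + 2x \lt{f_x}^2.
\]
Because $1 \le 1+x$ and $x \le 1+x$ on $\R_+$, the two terms can be absorbed into a common factor, yielding
\[
f(x)^2
\le
2(1+x)\bigl( f(0)^2 + \lt{f_x}^2 \bigr).
\]

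Finally I would multiply this inequality by $|w(x)|$ and integrate over $\R_+$; since the bracket $f(0)^2 + \lt{f_x}^2$ is independent of $x$, it factors out of the integral, and the remaining factor $\int_{\R_+}(1+x)|w(x)|\,dx$ is precisely $\lpa{1}{1}{w}$. This produces \eqref{poin} with the explicit constant $C = 2$. There is no genuine obstacle here: the argument is a short consequence of the fundamental theorem of calculus and Cauchy--Schwarz. The only point deserving a word of care is the well-definedness of the trace $f(0)$ and the validity of the integral representation, both of which are guaranteed by the one-dimensional Sobolev embedding of $H^1(\R_+)$ into the continuous functions on the closed half-line.
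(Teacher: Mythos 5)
Your proof is correct: the fundamental theorem of calculus combined with Cauchy--Schwarz gives the pointwise bound $f(x)^2 \le 2(1+x)\bigl(f(0)^2 + \lt{f_x}^2\bigr)$, and integrating this against $|w(x)|$ yields \eqref{poin} with the explicit constant $C=2$. The paper itself omits the proof, deferring to \cite{knz03}, and your argument is essentially the same standard one used there, so there is nothing further to add.
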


To summarize the {\it a priori} estimate,
we define non-negative functions $N(t)$ and $D(t)$ by
\begin{gather*}
N(t)
:=
\sup_{0 \le \tau \le t} \ho{\vvp(\tau)},
\\
D(t)^2
:=
|(\vp,\vp_x)(t,0)|^2
+ \lt{\vp_x(t)}^2
+ \ho{(\psi_x,\chi_x)(t)}^2.
\end{gather*}

\begin{proposition}
\label{pro-1}
Assume that the stationary solution exists.
Namely, one of the following three conditions is supposed to hold:
{\rm (i)} $M_+ > 1$ and \eqref{sm-del}, \ 
%{\rm (ii)} $M_+ = 1$, \eqref{sm-del} and \eqref{bc-mnz}, or \
%{\rm (iii)} $M_+ < 1$, \eqref{sm-del} and \eqref{bc-mnn}.
{\rm (ii)} $M_+ = 1$ and \eqref{bc-mnz}, or \
{\rm (iii)} $M_+ < 1$ and \eqref{bc-mnn}.
Let $\vvp = (\vp,\psi,\chi) \in X(0,T)$ be a solution to
\eqref{pt-eq}, \eqref{pt-ic} and \eqref{pt-bc}
for a certain constant $T > 0$.
Then there exist positive constants $\ep_3$ and $C$ 
independent of $T$ such that if $N(T) + \dels \le \ep_3$,
then the solution $\vvp$ satisfies the estimate
\begin{equation}
\ho{\vvp(t)}^2
+
\int_0^t D(\tau)^2 \, d \tau
\le
C \ho{\vvp_0}^2.
\label{apri-1}
\end{equation}
\end{proposition}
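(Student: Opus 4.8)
The plan is to prove \eqref{apri-1} by an $H^1$ energy method applied to the perturbation system \eqref{pt-eq}, exploiting two structural features: the parabolic dissipation carried by the terms $\mu\psi_{xx}$ and $\kappa\chi_{xx}$, and the favorable boundary contribution generated by the outflow condition $\ub<0$. Since \eqref{apri-1} is the uniform-in-$T$ bound needed to continue the local solution of Lemma \ref{local-ext} globally, it suffices to work on $[0,T]$ under the smallness hypothesis $N(T)+\dels\le\ep_3$. That hypothesis keeps $\rho$ and $\theta$ in a fixed compact subset of $(0,\infty)$, so that the inequality $\li{f}^2\le C\lt{f}\lt{f_x}$ together with the smallness of $N(T)$ may be used repeatedly to absorb cubic terms into the dissipation.

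First I would derive the zeroth-order estimate. I would introduce an energy density $\cale(\vvp)$ assembled from the mechanical energy $\tfrac12\rho\psi^2$ and the relative-entropy contributions of the density and the temperature; for $N(T)+\dels$ small this density is positive definite and equivalent to $|\vvp|^2$. Multiplying \eqref{pt-eq2} by $\psi$, \eqref{pt-eq3} by $\chi/\theta$, and \eqref{pt-eq1} by a pressure-type multiplier proportional to $\vp$, and summing, I expect an identity of the form
\[
\frac{d}{dt}\int_{\R_+}\cale\,dx + c\int_{\R_+}(\mu\psi_x^2+\kappa\chi_x^2)\,dx + c\,|\ub|\,\vp(t,0)^2 \le (\text{error}).
\]
Here the viscous boundary terms vanish thanks to $(\psi,\chi)(t,0)=(0,0)$ from \eqref{pt-bc}, while integrating the convective density term $u\vp_x$ by parts produces precisely the good boundary term $|\ub|\vp(t,0)^2$, because $u(t,0)=\ub<0$. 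The error terms are of two kinds: products of stationary-solution gradients $(\rt_x,\ut_x,\ttt_x)$ with quadratic perturbation quantities, and genuinely cubic terms. The cubic terms are absorbed by the smallness of $N(T)$; the first kind is handled by the weighted Poincar\'e inequality \eqref{poin}, which converts $\int|w|f^2$ (with $w$ a stationary gradient and $f$ a perturbation) into $\lpa{1}{1}{w}(f(0)^2+\lt{f_x}^2)$, both of which live inside $D(t)^2$.

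The crux, and the step I expect to be hardest, is to recover $\lt{\vp_x}^2$: the continuity equation \eqref{pt-eq1} is hyperbolic and offers no smoothing of $\vp$. I would manufacture this dissipation from the momentum equation \eqref{pt-eq2}: multiplying it by $\vp_x$ and integrating, the pressure gradient $\tfrac1{M_+^2}(p-\pt)_x$ contributes a term whose principal part is $c\,\vp_x^2$ with $c>0$. The obstructive term $\rho\psi_t\vp_x$ is rewritten as $\tfrac{d}{dt}(\rho\psi\vp_x)$ minus lower-order terms, and the time derivative $\vp_{tx}$ thereby produced is removed with the help of \eqref{pt-eq1}, trading it for $\psi_x$, $\psi_{xx}$, $\chi_x$ and stationary-gradient products, giving
\[
\frac{d}{dt}\int_{\R_+}\rho\psi\vp_x\,dx + c\lt{\vp_x}^2 \le C\bigl(\lt{\psi_x}^2+\lt{\psi_{xx}}^2+\lt{\chi_x}^2\bigr) + (\text{error}),
\]
the boundary term $\rho\psi\vp_x|_{x=0}$ again vanishing because $\psi(t,0)=0$. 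I would then differentiate \eqref{pt-eq2} and \eqref{pt-eq3} in $x$ and test with $\psi_x$ and $\chi_x$, so that the parabolic terms supply the remaining dissipation $\mu\lt{\psi_{xx}}^2+\kappa\lt{\chi_{xx}}^2$; the new boundary contributions and the quantity $|\vp_x(t,0)|^2$ in $D(t)^2$ are read off from \eqref{pt-eq2} evaluated on $x=0$ (where also $\psi_t(t,0)=0$) and controlled by trace bounds.

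Finally I would form a suitable linear combination of the three estimates, choosing a small coefficient on the $\tfrac{d}{dt}\int\rho\psi\vp_x$ identity so that $\int\rho\psi\vp_x$ is dominated by $\int\cale$ and the right-hand $\lt{\psi_x}^2,\lt{\psi_{xx}}^2,\lt{\chi_x}^2$ are dominated by the full dissipation $D(t)^2$; integrating in time and taking $\ep_3$ small enough to absorb every $O(\dels)$ and $O(N(T))$ contribution then yields \eqref{apri-1}. Besides the $\vp_x$ step, the other delicate point is the transonic case $M_+=1$, where the stationary solution decays only algebraically (Proposition \ref{pro-ex}): there one must invoke the sharp expansions \eqref{dst-1}--\eqref{dst-3} and the precise sign structure of the leading $O(\zt^2)$ terms, since the crude bound $|\ut_x|\le C\zt^2$ is not by itself integrable against the weight $(1+x)$ demanded by \eqref{poin}, and the error terms must be reorganized accordingly.
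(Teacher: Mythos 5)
Your plan follows the paper's proof essentially step for step: the relative-entropy $L^2$ estimate with the good outflow boundary term and the Poincar\'e-type inequality \eqref{poin}, the cross-term $\int\rho\psi\vp_x$ device combined with the positivity of the pressure term to manufacture $\lt{\vp_x}^2$ (the paper realizes it by adding $\mu$ times the $x$-differentiated \eqref{pt-eq1} tested with $\vp_x$ to \eqref{pt-eq2} tested with $\rho\vp_x$, which cancels the $\vp_x\psi_{xx}$ coupling exactly), parabolic estimates supplying $\lt{\psi_{xx}}^2+\lt{\chi_{xx}}^2$, and, for $M_+=1$, the sharp expansion \eqref{dst-2} showing that the leading $\zt^2$ contribution is a negative multiple of a positive definite quadratic form and hence acts as extra dissipation. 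The only cosmetic deviation is that the paper multiplies \eqref{pt-eq2} and \eqref{pt-eq3} by $-\psi_{xx}$ and $-\chi_{xx}$ instead of differentiating in $x$ and testing with $\psi_x$, $\chi_x$; this makes the viscous boundary terms vanish identically (because $\psi_t(t,0)=\chi_t(t,0)=0$), avoiding the trace argument for $\psi_{xx}(t,0)$ that your variant would need.
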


We  prove Proposition \ref{pro-1} 
in Section \ref{apri-nd}
for the case where
the stationary solution is non-degenerate, that is, $M_+ \neq 1$.
Since the decay property of the degenerate stationary solution for
 the case $M_+ = 1$
is different from that of the non-degenerate one,
we have to modify the derivation of the estimate (\ref{apri-1})
for  $M_+ = 1$.
It will be studied in Section \ref{apri-d}.

In deriving {\it a priori} estimates,
we have to employ a mollifier with respect to time
variable $t$ to resolve an insufficiency  of regularity of 
the solution obtained in Lemma \ref{local-ext}.
As this argument is standard,
we omit  detailed  computations and proceed
a derivation of the estimates as if the solution
verifies the sufficient regularity.

\subsection{Estimates for supersonic and subsonic flows}
\label{apri-nd}

In this section, we obtain the uniform {\it a priori} estimates
of the perturbation from the non-degenerate stationary solution.
Namely, we show (\ref{apri-1}) for the case $M_+ \neq 1$.
In order to obtain the estimate (\ref{apri-1}),
we firstly  derive  a basic $L^2$ estimate.
To this end, it is convenient to employ an energy form $\cale$
defined by
\begin{gather*}
\cale
:=
\frac{1}{M_+^2 \gamma} \ttt \omega \Bigl( \frac{\rt}{\rho} \Bigr)
+ \frac{1}{2} \psi^2
+ \frac{\cv}{M_+^2} \ttt \omega \Bigl( \frac{\theta}{\ttt} \Bigr),
\quad
\omega(s) := s -1 -\log s.
\end{gather*}
Owing to  a smallness assumption on $N(T)$, 
a quantity $\li{\vvp}$ is also sufficiently small.
Hence we see that the energy form is equivalent to $|\vvp|^2$:
\begin{equation}
c \vp^2 
\le \omega \Bigl( \frac{\rt}{\rho} \Bigr)
\le C \vp^2,
\quad
c \chi^2 
\le \omega \Bigl( \frac{\theta}{\ttt} \Bigr)
\le C \chi^2,
\quad
c |\vvp|^2
\le \cale
\le
C |\vvp|^2.
\label{eng-t}
\end{equation}
The solution,  moreover, satisfies the follorin uniform estimates
\begin{equation}
0 < c \le \rho(t,x),\, \theta(t,x) \le C,
\quad
-C \le u(t,x) \le -c < 0
\label{p-rho}
\end{equation}
for $(t,x) \in [0,T] \times \R_+$.

\begin{lemma}
\label{lm-a}
Suppose that $M_+ \neq 1$ and 
the same conditions as in Proposition {\rm \ref{pro-1}}
hold.
Then we have
\begin{gather}
\lt{\vvp(t)}^2
+ \int_0^t
\left(
\vp(\tau,0)^2
+ \lt{(\psi_x,\chi_x)(\tau)}^2
\right)  d \tau
\nonumber
\\
\le
C \lt{\vvp_0}^2
+ C \dels
\int_0^t \lt{\vp_x(\tau)}^2 \, d \tau.
\label{a01}
\end{gather}
\end{lemma}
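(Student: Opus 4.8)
The plan is to run a relative-energy estimate based on the form $\cale$, integrate it over $\R_+\times[0,t]$, and control the error terms produced by the stationary solution via the smallness of $\dels$ together with the Poincar\'e-type inequality (\ref{poin}). Since $\cale$ is the relative entropy attached to the convex entropy of the system, it is designed so that, after multiplying (\ref{pt-eq1}), (\ref{pt-eq2}) and (\ref{pt-eq3}) by the multipliers naturally associated with $\cale$ (a pressure-type multiple of $\vp$, the velocity perturbation $\psi$, and a temperature-type multiple of $\chi$, respectively) and adding, the principal and quadratic bulk contributions reorganize into a conservative identity
\begin{equation*}
(\rho\cale)_t + F_x + G = R,
\end{equation*}
where $F$ is a flux that decays to $0$ as $x\to\infty$, the dissipation $G$ satisfies $G\ge c(\psi_x^2+\chi_x^2)$ pointwise by (\ref{eng-t}) and (\ref{p-rho}), and $R$ collects every term carrying a factor of one of $\rt_x,\ut_x,\ttt_x$ together with the genuinely nonlinear (cubic) remainders.

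First I would integrate this identity over $x\in\R_+$. All contributions at infinity vanish since the perturbation decays, so the identity becomes
\begin{equation*}
\frac{d}{dt}\int_{\R_+}\rho\cale\,dx + \int_{\R_+}G\,dx - F(t,0) = \int_{\R_+}R\,dx.
\end{equation*}
Using the boundary conditions $\psi(t,0)=\chi(t,0)=0$ from (\ref{pt-bc}), the surviving part of $F(t,0)$ is the convective density contribution, which to leading order equals $\ub$ times a positive multiple of $\vp(t,0)^2$. Here the outflow condition $\ub<0$ is essential: it gives $-F(t,0)\ge c\,\vp(t,0)^2$, so the boundary term joins the dissipation on the left-hand side. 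Replacing $\int_{\R_+}\rho\cale\,dx$ by a quantity equivalent to $\lt{\vvp}^2$ through (\ref{eng-t}) and (\ref{p-rho}), and integrating in time over $[0,t]$, I arrive at
\begin{equation*}
\lt{\vvp(t)}^2 + \int_0^t\Bigl(\vp(\tau,0)^2 + \lt{(\psi_x,\chi_x)(\tau)}^2\Bigr)d\tau \le C\lt{\vvp_0}^2 + C\int_0^t\!\int_{\R_+}|R|\,dx\,d\tau.
\end{equation*}

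The decisive step is the estimate of $R$. Each stationary-derivative term is bounded pointwise by $C\,w\,|\vvp|^2$ with $w:=|(\rt_x,\ut_x,\ttt_x)|$. In the non-degenerate cases $M_+>1$ and $M_+<1$ the exponential decay (\ref{dc-sp}) gives $w(x)\le C\dels e^{-cx}$, hence $\lpa{1}{1}{w}\le C\dels$. Applying (\ref{poin}) with $f=\psi$ and $f=\chi$ and using $\psi(t,0)=\chi(t,0)=0$ yields $\int_{\R_+}w(\psi^2+\chi^2)\,dx\le C\dels\lt{(\psi_x,\chi_x)}^2$, whereas with $f=\vp$ it yields $\int_{\R_+}w\vp^2\,dx\le C\dels\bigl(\vp(t,0)^2+\lt{\vp_x}^2\bigr)$, since $\vp$ need not vanish at the boundary. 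The cubic remainders are bounded by $C\li{\vvp}\le CN(T)$ times the dissipation. Choosing $\ep_3$ so that $N(T)+\dels$ is small, the contributions $C\dels\lt{(\psi_x,\chi_x)}^2$, $C\dels\,\vp(t,0)^2$ and the cubic terms are absorbed into the left-hand dissipation and boundary terms, which gives (\ref{a01}).

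The only term in $R$ that cannot be absorbed is $C\dels\int_0^t\lt{\vp_x}^2\,d\tau$, because the $L^2$ relative-energy identity produces dissipation only in $\psi_x$ and $\chi_x$, never in $\vp_x$; this is precisely why it must be retained on the right-hand side of (\ref{a01}), to be controlled later through the equation of motion in the $H^1$ estimate. The main obstacle is thus purely structural: verifying that the relative-energy computation indeed delivers a nonnegative bulk dissipation in $(\psi_x,\chi_x)$ and, via $\ub<0$, a favorably signed boundary term $c\,\vp(t,0)^2$, so that after the Poincar\'e reduction everything except the density-gradient residual closes.
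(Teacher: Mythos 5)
Your proposal is correct and follows essentially the same route as the paper: the paper multiplies \eqref{pt-eq2} by $\psi$ and \eqref{pt-eq3} by $\chi/\theta$ to produce exactly the identity $(\rho\cale)_t-(G_1^{(1)}+B_1)_x+\mu\frac{\ttt}{\theta}\psi_x^2+\kappa\frac{\ttt}{\theta^2}\chi_x^2=\ut_x G_1^{(2)}+\ttt_x G_1^{(3)}+R_1$ you describe, extracts the boundary term $-(\rho u\cale)|_{x=0}\ge c\,\vp(t,0)^2$ from the outflow condition, and controls the residuals via \eqref{dc-sp} and the Poincar\'e-type inequality \eqref{poin}, leaving only the unabsorbable $C\dels\int_0^t\lt{\vp_x}^2\,d\tau$ on the right. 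The only cosmetic difference is that in the paper's computation every residual term carries a stationary-derivative factor $O(\dels e^{-cx})$, so no genuinely cubic remainder requiring the smallness of $N(T)$ actually arises in this particular lemma.
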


\begin{proof}
Multiplying (\ref{pt-eq2}) by $\psi$,
and (\ref{pt-eq3}) by $\chi/\theta$,
then adding up the resultant two equalities,
we have
\begin{gather}
(\rho \cale)_t
- (G_1^{(1)} + B_1)_x
+ \mu \frac{\ttt}{\theta} \psi_x^2
+ \kappa \frac{\ttt}{\theta^2} \chi_x^2
=
\ut_x G_1^{(2)} + \ttt_x G_1^{(3)} + R_1,
\label{a02}
\\
G_1^{(1)} :=
- \rho u \cale
- \frac{1}{M_+^2} (p - \pt) \psi,
\quad
B_1 :=
\mu \psi \psi_x + \frac{\kappa}{\theta} \chi \chi_x,
\nonumber
\\
G_1^{(2)} :=
- (\rho u - \rt \ut) \psi
- \frac{1}{M_+^2 \gamma} \vp \chi
+ \frac{1}{M_+^2 \gamma} \frac{\ttt}{\ut} \vp \psi
- \frac{1}{M_+^2 \gamma} \frac{\rt}{\theta} \chi^2,
\nonumber
\\
G_1^{(3)} :=
\frac{1}{M_+^2 \gamma} \rho u \omega \Bigl( \frac{\rt}{\rho} \Bigr)
+ \frac{\cv}{M_+^2} \rho u \omega \Bigl( \frac{\theta}{\ttt} \Bigr)
- \frac{\cv}{M_+^2} 
\frac{1}{\ttt \theta} \chi (\rho u \theta - \rt \ut \ttt),
\nonumber
\\
R_1 :=
\frac{\kappa}{\theta^2} \ttt_x \chi \chi_x
+ \frac{2 \mu}{\theta} \ut_x \chi \psi_x.
\nonumber
\end{gather}
Due to the boundary conditions (\ref{d-bc}) and (\ref{pt-bc}),
the integral of the second term on the left-hand side of (\ref{a02})
is estimated from below as
\begin{equation}
- \int_{\R_+} (G_1^{(1)} + B_1)_x \, d x
= - (\rho u \cale) |_{x=0}
\ge c \vp(t,0)^2.
\label{a03}
\end{equation}
%\begin{align}
%&
%\lt{\vvp(t)}^2
%+ \int_0^t \bigl( \vp(\tau,0)^2 + \lt{(\psi_x,\chi_x)(\tau)}^2 \bigr) \, d\tau
%\nonumber
%\\
%& \mspace{20mu}
%\le
%C \lt{\vvp_0}^2
%+ C \int_0^t \!\! \int_{\R_+} |\ut_x G_1^{(2)} + \ttt_x G_1^{(3)} + R_1| \, 
%dx \, d \tau
%\label{a03}
%\end{align}
In order to estimate the right-hand side of (\ref{a02}),
we use (\ref{dc-sp}), (\ref{poin}) and the fact 
$| (G_1^{(2)}, G_1^{(3)}) | \le C |\vvp|^2$, 
which follows from (\ref{eng-t}) and (\ref{p-rho}).
Hence we have
\begin{align}
\int_{\R_+} |\ut_x G_1^{(2)} + \ttt_x G_1^{(3)} + R_1| \, dx
& \le
C \dels \lt{(\psi_x,\chi_x)}^2
+ C \dels \int_{\R_+} e^{-cx} |\vvp|^2 \, dx
\nonumber
\\
& \le
C \dels \left(
\vp(t,0)^2 + \lt{\vvp_x}^2
\right).
\label{a04}
\end{align}
Therefore, integrating (\ref{a02}) over $(0,T) \times \R_+$,
substituting (\ref{a03}) and (\ref{a04}) in the
resultant equality, and then letting $\dels$ suitably small,
we obtain the desired inequality (\ref{a01}).
\end{proof}

Our next aim is to get the estimate for the first order derivative
$(\vp_x,\psi_x,\chi_x)$.
To do this, we first  derive the estimate for $\vp_x$.

\begin{lemma}
\label{lm-b}
Suppose that $M_+ \neq 1$ and 
the same conditions as in Proposition {\rm \ref{pro-1}}
hold.
Then we have
\begin{align}
&
\lt{\vp_x(t)}^2
+ \int_0^t \left(
\vp_x(\tau,0)^2
+ \lt{\vp_x(\tau)}^2
\right) d \tau
\nonumber
\\
& \mspace{20mu}
\le
C \ho{\vvp_0}^2
+ C (N(t) + \dels)
\int_0^t D(\tau)^2 \, d \tau.
\label{b01}
\end{align}
\end{lemma}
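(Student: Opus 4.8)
The plan is to derive the $\vp_x$ estimate from the \emph{momentum} equation \eqref{pt-eq2}, since the density equation \eqref{pt-eq1} carries no dissipation for $\vp$. The pressure gradient $\frac{1}{M_+^2}(p-\pt)_x$ contains the term $\frac{\ttt}{M_+^2\gamma}\vp_x$, so multiplying \eqref{pt-eq2} by $\vp_x$ and integrating over $\R_+$ will produce the desired dissipation $\frac{1}{M_+^2\gamma}\int_{\R_+}\ttt\vp_x^2\,dx \ge c\lt{\vp_x}^2$ (using $\ttt\ge c>0$, which follows from \eqref{dc-sp} and the smallness of $\dels$). To supply the missing time-regularity of $\vp_x$, I would first substitute the continuity equation \eqref{pt-eq1} into the viscous term, writing $\mu\psi_{xx}=-\mu\,\partial_x\bigl(\rho^{-1}(\vp_t+u\vp_x+\ut_x\vp+\rt_x\psi)\bigr)$. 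The leading contribution $\mu\rho^{-1}\vp_{xt}$, after multiplication by $\vp_x$, integrates to $\frac{d}{dt}\int_{\R_+}\frac{\mu}{2\rho}\vp_x^2\,dx$ plus lower-order remainders, which furnishes the energy $\lt{\vp_x(t)}^2$ on the left of \eqref{b01}.

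Next I would track two structural features. First, the convective piece $\mu\,\partial_x(u\rho^{-1}\vp_x)$ contributes, after integration by parts, a boundary term proportional to $-u(t,0)\rho(t,0)^{-1}\vp_x(t,0)^2$; since $u(t,0)=\ub<0$ by the outflow condition, this has the correct sign to yield the boundary dissipation $c\,\vp_x(t,0)^2$. Second, the term $\int_{\R_+}\rho\psi_t\vp_x\,dx$ is rewritten as $\frac{d}{dt}\int_{\R_+}\rho\psi\vp_x\,dx$ minus remainders; integrating one of these by parts in $x$ (the boundary term vanishes because $\psi(t,0)=0$ by \eqref{pt-bc}) and inserting $\vp_t=-u\vp_x-\rho\psi_x-\ut_x\vp-\rt_x\psi$ produces a convective cross term $\int\rho u\psi_x\vp_x\,dx$ that exactly cancels the analogous contribution of $\rho u\psi_x$ in \eqref{pt-eq2}. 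The cross term $\int\rho\psi\vp_x$ coming from the time derivative is controlled at the endpoints by Young's inequality, absorbing a small multiple of $\lt{\vp_x}^2$ into the left-hand side.

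The genuinely nonlinear remainders are cubic expressions or integrals of the form $\int e^{-cx}|\vvp|\,|\vp_x|$. Here the hypothesis $M_+\neq1$ is essential, since it guarantees the exponential decay \eqref{dc-sp} of the stationary solution, so that $\ut_x,\rt_x,\ttt_x=O(\dels e^{-cx})$. Using \eqref{eng-t}, \eqref{p-rho}, the smallness of $N(t)$, and the Poincar\'e-type inequality \eqref{poin} with weight $w=e^{-cx}\in L^1_1$ (which converts $\int e^{-cx}|\vvp|^2$ into $C(\vp(t,0)^2+\lt{\vvp_x}^2)$), all of these are bounded by $C(N(t)+\dels)D(t)^2$.

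The main obstacle is that the procedure unavoidably generates remainders of order $\lt{\psi_x}^2$ and $\lt{\chi_x}^2$ with $O(1)$ coefficients, for instance from $\int\rho\psi_x\vp_t$ with $\vp_t\sim-\rho\psi_x$, and from the pressure cross term $\int\rt\chi_x\vp_x$; these cannot be absorbed into $C(N(t)+\dels)D(t)^2$. The resolution is to invoke the already-established basic estimate: by Lemma \ref{lm-a}, $\int_0^t\lt{(\psi_x,\chi_x)(\tau)}^2\,d\tau\le C\lt{\vvp_0}^2+C\dels\int_0^t\lt{\vp_x(\tau)}^2\,d\tau$. Substituting this bound, and then choosing $\dels$ small enough to absorb $C\dels\int_0^t\lt{\vp_x}^2$ into the left-hand side, yields \eqref{b01}. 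Finally, integrating the energy identity over $[0,t]$ and handling the time-endpoint terms $\int_{\R_+}\bigl(\frac{\mu}{2\rho}\vp_x^2+\rho\psi\vp_x\bigr)\,dx$ as above (again using Lemma \ref{lm-a} to control $\lt{\psi(t)}^2$) completes the argument.
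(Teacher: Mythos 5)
Your proposal is correct and follows essentially the same route as the paper: the paper multiplies the momentum equation \eqref{pt-eq2} by $\rho\vp_x$ and adds $\mu\vp_x$ times the $x$-differentiated continuity equation (so the $\psi_{xx}$ terms cancel), which is just a reorganization of your substitution of \eqref{pt-eq1} into the viscous term; both yield the energy $\frac{\mu}{2}\vp_x^2$ plus the cross term $\rho^2\vp_x\psi$, the pressure dissipation $\frac{1}{M_+^2}p\,\vp_x^2$, the good-signed boundary flux $-\frac{\mu}{2}\ub\vp_x(t,0)^2$, and remainders controlled by \eqref{poin} and $C(N(t)+\dels)D(t)^2$. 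Your closing step—invoking Lemma \ref{lm-a} to absorb the $O(1)$ terms $\int_0^t\lt{(\psi_x,\chi_x)}^2\,d\tau$ and the endpoint $\lt{\psi(t)}^2$—is exactly how the paper concludes.
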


\begin{proof}
Differentiate (\ref{pt-eq1}) in $x$ to get
\begin{gather}
\vp_{xt} + u \vp_{xx} + \rho \psi_{xx} 
= f_2,
\label{b02}
\\
f_2
:=
-(2 \vp_x \psi_x + 2 \ut_x \vp_x + 2 \rt_x \psi_x
+ \ut_{xx} \vp + \rt_{xx} \psi).
\nonumber
\end{gather}
Multiplying (\ref{b02}) by $\vp_x$ yields
\begin{equation}
\Bigl(
\frac{1}{2} \vp_x^2
\Bigr)_t
+
\Bigl(
\frac{1}{2} u \vp_x^2
\Bigr)_x
=
- \rho \vp_x \psi_{xx}
+ R_2^{(1)},
\quad
R_2^{(1)}
:=
\frac{1}{2} u_x \vp_x^2
+ f_2 \vp_x.
\label{b03}
\end{equation}
On the other hand, multiplying (\ref{pt-eq2}) by $\rho \vp_x$ yields 
\begin{gather}
(\rho^2 \vp_x \psi)_t
- (\rho^2 \vp_t \psi)_x
+ \frac{1}{M_+^2} p \vp_x^2
=
\mu \rho \vp_x \psi_{xx}
+ G_2
+ R_2^{(2)},
\label{b04}
\\
G_2
:=
\rho^3 \psi_x^2
- \frac{1}{M_+^2 \gamma} \rho^2 \vp_x \chi_x,
\nonumber
\\
R_2^{(2)}
:=
- 2 \rho \rt_x \vp_t \psi
+ \rho^2 \psi_x (\ut_x \vp + \rt_x \psi)
- \frac{1}{M_+^2 \gamma} \rho \vp_x (\ttt_x \vp + \rt_x \chi)
- \rho \ut_x \vp_x (\rho u - \rt \ut).
\nonumber
\end{gather}
Successively multiplying (\ref{b03}) by $\mu$ and adding the resultant
equality to (\ref{b04}), we have
\begin{gather}
\Bigl(
\frac{\mu}{2} \vp_x^2 + \rho^2 \vp_x \psi
\Bigr)_t
+\Bigl(
\frac{\mu}{2} u \vp_x^2 - \rho^2 \vp_t \psi
\Bigr)_x
+ \frac{1}{M_+^2} p \vp_x^2
=
G_2 + R_2,
\label{b05}
\\
R_2
:=
\mu R_2^{(1)} + R_2^{(2)}.
\nonumber
\end{gather}

Owing to the outflow boundary condition on $u$
in \eqref{bc},
the integral of the second term on the left-hand side of (\ref{b05})
is estimated from below as
\begin{equation}
\int_{\R_+} \Bigl(
\frac{\mu}{2} u \vp_x^2 - \rho^2 \vp_t \psi
\Bigr)_x \, dx
=
- \frac{\mu}{2} \ub \vp_x(t,0)^2
\ge
c \vp_x(t,0)^2.
\label{b06}
\end{equation}
Hereafter, we denote $\ep$ an arbitrary positive constant and
$C_\ep$ a positive constant depending on $\ep$.
The first term on the right-hand side of (\ref{b05}) is estimated as
\begin{equation}
|G_2|
\le
\ep \vp_x^2
+ C_\ep |(\psi_x,\chi_x)|^2.
\label{b07}
\end{equation}
Since the second term on the right-hand side of (\ref{b05}) is 
estimated as
\begin{equation}
|R_2|
\le
C |\psi_x| \vp_x^2
+ C \dels |(\vp_x,\psi_x)|^2
+ C \dels e^{-cx} |\vvp|^2,
\label{b09}
\end{equation}
we get the estimate for the integral of $R_2$ as
\begin{equation}
\int_{\R_+} |R_2| \, dx
\le
C (N(t) + \dels) D(t)^2.
\label{b08}
\end{equation}
In deriving (\ref{b08}),
 we have used the estimate
\begin{equation*}
\int_{\R_+} |\psi_x| \vp_x^2 \, dx
\le
\li{\psi_x} \lt{\vp_x}^2
\le
C \ho{\psi_x} \lt{\vp_x}^2
\le
C N(t) (\ho{\psi_x}^2 + \lt{\vp_x}^2)
\end{equation*}
to handle the first term on the right-hand side of (\ref{b09})
and 
the Poincar\'e type inequality (\ref{poin}) to
estimate the third term.

Therefore, integrating (\ref{b05}) over $(0,T) \times \R_+$,
substituting (\ref{b06}), (\ref{b07}) and (\ref{b08})
 in the resultant equality
and then letting $\ep$ small,
we obtain
\begin{align*}
\lt{\vp_x}^2
+ \int_0^t \bigl(
\vp_x(\tau,0)^2 + \lt{\vp_x}^2
\bigr) \, d \tau
\le {}
&
C \lt{\vvp_{0}}^2
+ C \ho{\vvp}^2
+ C \int_0^t \lt{(\psi_x,\chi_x)}^2 \, d \tau
\nonumber
\\
& \mspace{10mu}
+ C(N(t) + \dels) 
\int_0^t D(\tau)^2 \, d \tau,
\end{align*}
which yields the desired estimate (\ref{b01}) by substituting
(\ref{a01}) in the second and the third terms on the right-hand side.
These computations complete the proof.
\end{proof}

Next we  estimate $\psi_x$.

\begin{lemma}
\label{lm-c}
Suppose that $M_+ \neq 1$ and 
the same conditions as in Proposition {\rm \ref{pro-1}}
hold.
Then we have
\begin{equation}
\lt{\psi_x(t)}^2
+ \int_0^t \lt{\psi_{xx}(\tau)}^2 \, d \tau
\le
C \ho{\vvp_0}^2 
+ C(N(t) + \dels) \int_0^t D(\tau)^2 \, d \tau.
\label{c01}
\end{equation}
\end{lemma}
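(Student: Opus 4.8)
The plan is to obtain a differential inequality for $\lt{\psi_x}^2$ by multiplying the momentum equation \eqref{pt-eq2} by $-\psi_{xx}$ and integrating over $\R_+$. The viscous term immediately supplies the dissipation $\mu\lt{\psi_{xx}}^2$. To extract the energy I would integrate $-\int_{\R_+}\rho\psi_t\psi_{xx}\,dx$ by parts in $x$, which produces $\frac{d}{dt}\int_{\R_+}\frac{\rho}{2}\psi_x^2\,dx$ together with the commutator terms $\int_{\R_+}\rho_x\psi_t\psi_x\,dx$ and $-\int_{\R_+}\frac{\rho_t}{2}\psi_x^2\,dx$; the boundary contribution at $x=0$ vanishes since $\psi(t,0)=0$ gives $\psi_t(t,0)=0$, and the lower bound $\rho\ge c>0$ from \eqref{p-rho} makes the energy comparable to $\lt{\psi_x(t)}^2$.

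The structurally delicate point is the convective term $-\int_{\R_+}\rho u\psi_x\psi_{xx}\,dx$. Integrating it by parts would generate the boundary term $\frac{1}{2}\rho(t,0)|\ub|\psi_x(t,0)^2$ with a positive sign on the right-hand side, and this quantity is not contained in $D(t)$; I would therefore estimate the term directly by Young's inequality as $\ep\lt{\psi_{xx}}^2+C_\ep\lt{\psi_x}^2$, using $|\rho u|\le C$ from \eqref{p-rho}. This is the main obstacle of the proof, and the direct estimate circumvents it at the price of a $\lt{\psi_x}^2$ term that the basic $L^2$ estimate \eqref{a01} already absorbs.

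For the remaining terms I would proceed as follows. In the pressure term $-\frac{1}{M_+^2}\int_{\R_+}(p-\pt)_x\psi_{xx}\,dx$ the factor $(p-\pt)_x$ has principal part proportional to $\vp_x$ and $\chi_x$, plus contributions carrying either a stationary weight $\dels e^{-cx}$ (through \eqref{dc-sp}) or a smallness factor $N(t)$; Young's inequality together with the Poincar\'e inequality \eqref{poin} bounds it by $\ep\lt{\psi_{xx}}^2+C_\ep(\lt{\vp_x}^2+\lt{\chi_x}^2)+C(N(t)+\dels)D(t)^2$. The source term $\int_{\R_+}(\rho u-\rt\ut)\ut_x\psi_{xx}\,dx$ carries the exponential weight $\ut_x=O(\dels e^{-cx})$ and is $\ep\lt{\psi_{xx}}^2+C\dels^2D(t)^2$. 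In the commutators I would substitute $\rho_t=-(\rho u)_x$ from \eqref{d-eq1} and $\psi_t$ from \eqref{pt-eq2}; the resulting expressions are cubic in the perturbation or carry a stationary weight, so that Sobolev embedding and the smallness of $N(t)$ bound them by $C(N(t)+\dels)D(t)^2+C\dels\lt{\psi_x}^2$.

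Collecting these estimates, fixing $\ep$ small enough to absorb $\ep\lt{\psi_{xx}}^2$ into $\mu\lt{\psi_{xx}}^2$, and integrating over $(0,t)$ leaves
\[
\lt{\psi_x(t)}^2+\int_0^t\lt{\psi_{xx}(\tau)}^2\,d\tau\le C\ho{\vvp_0}^2+C\int_0^t\bigl(\lt{\vp_x}^2+\lt{\chi_x}^2+\lt{\psi_x}^2\bigr)\,d\tau+C(N(t)+\dels)\int_0^tD(\tau)^2\,d\tau.
\]
The final step, where the earlier lemmas are essential, is to dispose of the time-integrated first-order terms: \eqref{a01} controls $\int_0^t(\lt{\psi_x}^2+\lt{\chi_x}^2)\,d\tau$ by $C\lt{\vvp_0}^2+C\dels\int_0^t\lt{\vp_x}^2\,d\tau$, and \eqref{b01} controls the remaining $\int_0^t\lt{\vp_x}^2\,d\tau$ by $C\ho{\vvp_0}^2+C(N(t)+\dels)\int_0^tD(\tau)^2\,d\tau$. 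Substituting these yields \eqref{c01}.
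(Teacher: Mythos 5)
Your proof is correct and follows essentially the same route as the paper: multiply \eqref{pt-eq2} by $-\psi_{xx}$, bound the convective term $\rho u\psi_x\psi_{xx}$ and the principal pressure terms directly by Young's inequality (avoiding the integration by parts that would produce the uncontrollable boundary term $\psi_x(t,0)^2$), treat the commutator and stationary-weighted terms with the Poincar\'e inequality \eqref{poin} and the smallness of $N(t)+\dels$, and close by absorbing $\int_0^t \lt{\vvp_x}^2\,d\tau$ using \eqref{a01} and \eqref{b01}. The only difference is presentational: the paper leaves that last absorption step implicit, whereas you spell it out.
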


\begin{proof}
Multiplying (\ref{pt-eq2}) by $-\psi_{xx}$ gives
\begin{gather}
\Bigl(
\frac{1}{2} \rho \psi_x^2
\Bigr)_t
- (\rho \psi_x \psi_t)_x
+ \mu \psi_{xx}^2
= G_3 + R_3,
\label{c02}
\\
G_3 
:= 
\rho u \psi_x \psi_{xx}
+ \frac{1}{M_+^2 \gamma} (\theta \vp_x + \rho \chi_x) \psi_{xx},
\nonumber
\\
R_3
:=
\frac{1}{M_+^2 \gamma} (\ttt_x \vp + \rt_x \chi) \psi_{xx}
+ \ut_x (\rho u - \rt \ut) \psi_{xx}
- \rho_x \psi_x \psi_t
+ \frac{1}{2} \rho_t \psi_x^2.
\nonumber
\end{gather}
Notice that $G_3$ satisfies
\begin{equation}
|G_3|
\le
\ep \psi_{xx}^2
+ C_\ep |\vvp_x|^2.
\label{c03}
\end{equation}
%for an arbitrary positive constant $\ep$ and a positive 
%constant $C_\ep$ depending on $\ep$.
The term $R_3$ is estimated,
by 
 (\ref{dc-sp}),
 as
\begin{equation*}
|R_3|
\le
C |\psi_x \vvp_x| |(\vvp_x,\psi_{xx})|
+ C \dels |(\vvp_x,\psi_{xx})|^2
+ C \dels e^{-cx} |\vvp|^2.
\end{equation*}
By using (\ref{poin}) and an inequality
\begin{align*}
\int_{\R_+} |\psi_x \vvp_x| |(\vvp_x,\psi_{xx})| \, dx
& \le
\li{\psi_x} \lt{\vvp_x} \lt{(\vvp_x,\psi_{xx})}
\le 
C N(t) \ho{\psi_x}  \lt{(\vvp_x,\psi_{xx})}
\\
& \le
C N(t) \lt{(\vvp_x, \psi_{xx})}^2,
\end{align*}
we have the estimate for the integral of $R_3$ as
\begin{equation}
\int_{\R_+} |R_3| \, dx
\le
C(N(t) + \dels) D(t)^2.
\label{c04}
\end{equation}
Therefore,
integrating (\ref{c02}) over $(0,t) \times \R_+$ and
substituting (\ref{c03}) and (\ref{c04}) in the resultant
equality,
we obtain the desired estimate (\ref{c01}).
\end{proof}

 We finally derive the estimate for $\chi_x$.

\begin{lemma}
\label{lm-d}
Suppose that $M_+ \neq 1$ and 
the same conditions as in Proposition {\rm \ref{pro-1}}
hold.
Then we have
\begin{equation}
\lt{\chi_x(t)}^2
+ \int_0^t \lt{\chi_{xx}(\tau)}^2 \, d \tau
\le
C \ho{\vvp_0}^2 
+ C(N(t) + \dels) \int_0^t D(\tau)^2 \, d \tau.
\label{d01}
\end{equation}
\end{lemma}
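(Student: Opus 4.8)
The plan is to mirror the derivation of Lemma \ref{lm-c} essentially verbatim, replacing the momentum equation (\ref{pt-eq2}) by the energy equation (\ref{pt-eq3}), the multiplier $-\psi_{xx}$ by $-\chi_{xx}$, and the dissipation $\mu\psi_{xx}^2$ by $\kappa\chi_{xx}^2$. First I would multiply (\ref{pt-eq3}) by $-\chi_{xx}$ and rewrite the term $-\frac{\cv}{M_+^2}\rho\chi_t\chi_{xx}$ in divergence form by one integration by parts in $x$, producing an identity of the shape
\[
\Bigl(\frac{\cv}{2M_+^2}\rho\chi_x^2\Bigr)_t
- \Bigl(\frac{\cv}{M_+^2}\rho\chi_x\chi_t\Bigr)_x
+ \kappa\chi_{xx}^2
= G_4 + R_4,
\]
where $G_4$ gathers the principal products of first-order perturbation derivatives against $\chi_{xx}$---namely the convective term $\frac{\cv}{M_+^2}\rho u\chi_x\chi_{xx}$ together with the term $\frac{1}{M_+^2}\pt\psi_x\chi_{xx}$ obtained by expanding $pu_x-\pt\ut_x=\pt\psi_x+(p-\pt)u_x+\cdots$---and $R_4$ collects the remaining lower-order contributions.

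The dissipation $\kappa\chi_{xx}^2$ on the left is precisely what produces the term $\int_0^t\lt{\chi_{xx}}^2\,d\tau$. No boundary estimate is needed here, in contrast with Lemma \ref{lm-b}: since $\chi(t,0)=0$ by (\ref{pt-bc}) we also have $\chi_t(t,0)=0$, so the flux $\frac{\cv}{M_+^2}\rho\chi_x\chi_t$ vanishes at $x=0$ (and at spatial infinity by the decay of the solution). The term $G_4$ satisfies $|G_4|\le\ep\chi_{xx}^2+C_\ep|\vvp_x|^2$ by Young's inequality, the $\ep\chi_{xx}^2$ being later absorbed into the dissipation once $\ep$ is small, exactly as for $G_3$.

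The heart of the matter is $R_4$, which I would split into the same three types as in the proof of (\ref{c01}): a cubic part controlled by $C|\psi_x\vvp_x|\,|(\vvp_x,\chi_{xx})|$, a $\dels$-small part $C\dels|(\vvp_x,\chi_{xx})|^2$ coming from the stationary-solution derivatives via (\ref{dc-sp}), and an exponentially weighted part $C\dels e^{-cx}|\vvp|^2$. The time derivatives $\rho_t$ and $\chi_t$ appearing in $R_4$ are eliminated through the equations (\ref{pt-eq1}) and (\ref{pt-eq3}), just as $\psi_t$ was in Lemma \ref{lm-c}. Integrating the cubic part with $\li{\psi_x}\le C\ho{\psi_x}^{1/2}\lt{\psi_x}^{1/2}$ and the smallness $\lt{\psi_x}\le N(t)$, absorbing the $\dels$-part, and applying the Poincar\'e-type inequality (\ref{poin}) to the weighted part gives $\int_{\R_+}|R_4|\,dx\le C(N(t)+\dels)D(t)^2$.

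The one ingredient absent from the momentum case, and therefore the step to watch, is the viscous heating $\mu(u_x^2-\ut_x^2)=\mu(2\ut_x\psi_x+\psi_x^2)$, which has no analogue in (\ref{pt-eq2}). Its genuinely quadratic piece $\psi_x^2$ yields the cubic contribution $\mu\psi_x^2\chi_{xx}$; this nonetheless fits the template, since $|\psi_x^2\chi_{xx}|\le C|\psi_x\vvp_x|\,|(\vvp_x,\chi_{xx})|$ places it in the cubic part of $R_4$, while the cross term $2\mu\ut_x\psi_x\chi_{xx}$ lands in the $\dels$-small part because $|\ut_x|\le C\dels e^{-cx}$. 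Integrating the identity over $(0,t)\times\R_+$ and combining the bounds above then yields (\ref{d01}).
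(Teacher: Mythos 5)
Your proposal matches the paper's proof essentially step for step: multiply (\ref{pt-eq3}) by $-\chi_{xx}$, obtain the same identity $\bigl(\frac{\cv}{2M_+^2}\rho\chi_x^2\bigr)_t - \bigl(\frac{\cv}{M_+^2}\rho\chi_x\chi_t\bigr)_x + \kappa\chi_{xx}^2 = G_4 + R_4$ (with the boundary flux vanishing since $\chi_t(t,0)=0$), bound $G_4$ by $\ep\chi_{xx}^2 + C_\ep|\vvp_x|^2$, and control $R_4$ by $C(N(t)+\dels)D(t)^2$ using $L^\infty$--$L^2$ interpolation, the stationary decay (\ref{dc-sp}), and the Poincar\'e-type inequality (\ref{poin}), exactly as the paper does. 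The only cosmetic deviation is your splitting $pu_x-\pt\ut_x=\pt\psi_x+(p-\pt)u_x$ (the paper keeps $p\psi_x$ as principal and puts only $(p-\pt)\ut_x$ in $R_4$), which leaves an extra cubic residue $(p-\pt)\psi_x\chi_{xx}$ carrying a factor $|\vvp|$ rather than $|\vvp_x|$ and hence not literally of the form $C|\psi_x\vvp_x||(\vvp_x,\chi_{xx})|$; it is nevertheless bounded by $C N(t)D(t)^2$ via $\li{\vvp}\le CN(t)$, so the argument goes through unchanged.
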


\begin{proof}
Multiply (\ref{pt-eq3}) by $-\chi_{xx}$ to get
\begin{gather}
\Bigl(
\frac{\cv}{2 M_+^2} \rho \chi_x^2
\Bigr)_t
- \Bigl(
\frac{\cv}{M_+^2} \rho \chi_x \chi_t
\Bigr)_x
+ \kappa \chi_{xx}^2
=
G_4 + R_4,
\label{d02}
\\
G_4
:=
\frac{\cv}{M_+^2} \rho u \chi_x \chi_{xx}
+ \frac{1}{M_+^2 \gamma} \rho \theta \chi_x \chi_{xx},
\nonumber
\\
\begin{aligned}
R_4
:= &
-\mu (u_x^2 - \ut_x^2) \chi_{xx}
+ \frac{\cv}{M_+^2} \ttt_x (\rho \psi + \ut \vp) \chi_{xx}
+ \frac{1}{M_+^2 \gamma} \ut_x (\rho \chi + \ttt \vp) \chi_{xx}
\\
& 
- \frac{\cv}{M_+^2} \rho_x \chi_x \chi_t
+ \frac{\cv}{2 M_+^2} \rho_t \chi_x^2.
\end{aligned}
\nonumber
\end{gather}
%For an arbitrary positive constant $\ep$,
We see that
$G_4$ is estimated as
\begin{equation}
|G_4|
\le
\ep \chi_{xx}^2
+ C_\ep |\vvp_{x}|^2.
\label{d03}
\end{equation}
%where $C_\ep$ is a positive constant depending on $\ep$.
By a straightforward computation together with utilizing (\ref{dc-sp}), 
we see that $R_4$ satisfies
\begin{equation*}
|R_4|
\le
C |(\psi_x,\chi_x)| |\vvp_x| |(\vvp_x,\chi_{xx})|
+ C \psi_x^2 |\vvp_x|^2
+ C \dels |(\vvp_x,\chi_{xx})|^2
+ C \dels e^{-cx} |\vvp|^2.
\end{equation*}
Integrating the above estimate with the aid of
 using  inequalities
\begin{gather}
\begin{aligned}
\int_{\R_+} |(\psi_x,\chi_x)| |\vvp_x| |(\vvp_x,\chi_{xx})| \, dx
& \le
\li{(\psi_x,\chi_x)} \lt{\vvp_x} \lt{(\vvp_x,\chi_{xx})}
\\
& \le
C N(t) \lt{(\vvp_x,\psi_{xx},\chi_{xx})}^2,
\end{aligned}
\label{d05}
\\
\int_{\R_+} \psi_x^2 |\vvp_x|^2 \, dx
\le
\li{\psi_x}^2 \lt{\vvp_x}^2
\le
C N(t)^2 \ho{\psi_x}^2,
\label{d06}
\end{gather}
we get the estimate for the integral of $R_4$ as
\begin{equation}
\int_{\R_+} |R_4| \, dx
\le
C(N(t) + \dels) D(t)^2.
\label{d04}
\end{equation}
Thus, integrating (\ref{d02}) over $(0,t) \times \R_+$ and
substituting (\ref{d03}) and (\ref{d04}) in the resultant equality,
we obtain the desired estimate (\ref{d01}).
\end{proof}

\begin{proof}%
[\underline{Proof of Proposition {\rm \ref{pro-1}} for $M_+ \neq 1$}]
Summing up the estimates (\ref{b01}), (\ref{c01})
and (\ref{d01}), 
we have the estimate for the first order derivative $\vvp_x$ as
\begin{gather}
\lt{\vvp_x(t)}^2
+ \int_0^t \bigl(
\vp_x(\tau,0)^2
+ \lt{(\vp_x,\psi_{xx},\chi_{xx})(\tau)}^2
\bigr) \, d\tau
\nonumber
\\
\le
C \ho{\vvp_0}^2
+ C (N(t) + \dels) \int_0^t D (\tau)^2 \, d \tau.
\label{dd01}
\end{gather}
Then, adding (\ref{a01}) to (\ref{dd01})
and letting $N(T) + \dels$ suitably small,
we obtain the desired {\it a priori} estimate (\ref{apri-1}).
\end{proof}

\subsection{Estimates for transonic flow}
\label{apri-d}

In this section, we prove Proposition \ref{pro-1}
for the case $M_+ = 1$, where the
stationary solution is degenerate.
To do this, we define a dissipative norm $\tilde{D}(t)$ by
\[
\tilde{D}(t)^2
:=
D(t)^2 
+ \dels^2 \ltat{-2}{\vvp(t)}^2,
\]
where the norm $\ltat{\alpha}{\,\cdot\,}$ is defined by
\[
\ltat{\alpha}{u}
:=
\Bigl(
\int_{\R_+} (1+ \dels x)^\alpha |u(x)|^2 \, dx
\Bigr)^{1/2}.
\]
Using the above notation,
we show the uniform {\it a priori} estimate
\begin{equation}
\ho{\vvp(t)}^2
+ \int_0^t \tilde{D}(\tau)^2 \, d \tau
\le
C \ho{\vvp_0}^2,
\label{apri-1d}
\end{equation}
provided that $N(T)+\dels$ is sufficiently small.
Since the desired estimate (\ref{apri-1}) immediately follows
from (\ref{apri-1d}), it suffices to show the estimate (\ref{apri-1d}),
which is obtained by combining the estimates (\ref{e01})
and (\ref{f01}).
For the case $M_+=1$,
a decay property of the degenerate stationary
solution is worse than the non-degenerate stationary solution.
Therefore,
in deriving $L^2$ estimate of $\vvp$
summarized in Lemma \ref{lm-e},
we have to utilize the precise estimate (\ref{dst-2}) of the
degenerate stationary solution in order 
to estimate the term $\ut_x G_1^{(2)} + \ttt_x G_1^{(3)}$ in (\ref{a02}).
%to control terms including $(\ut_x,\ttt_x)$.

\begin{lemma}
\label{lm-e}
Suppose that $M_+ = 1$ and 
the same conditions as in Proposition {\rm \ref{pro-1}}
hold.
Then we have
\begin{gather}
\lt{\vvp(t)}^2
+ \int_0^t
\left(
\vp(\tau,0)^2
+ \dels^2 \ltat{-2}{\vvp(\tau)}^2
+ \lt{(\psi_x,\chi_x)(\tau)}^2
\right)  d \tau
\nonumber
\\
\le
C \lt{\vvp_0}^2
+ C \dels
\int_0^t \lt{\vp_x(\tau)}^2 \, d \tau.
\label{e01}
\end{gather}
\end{lemma}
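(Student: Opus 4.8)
The plan is to follow the proof of Lemma~\ref{lm-a} verbatim up to the point where the source terms are estimated, and then to extract from $\ut_x G_1^{(2)} + \ttt_x G_1^{(3)}$ the weighted dissipation $\dels^2\ltat{-2}{\vvp}^2$ instead of discarding it. First I would reproduce the energy identity \eqref{a02}, obtained by multiplying \eqref{pt-eq2} by $\psi$ and \eqref{pt-eq3} by $\chi/\theta$ and adding. The boundary term is handled exactly as in \eqref{a03}, so that $-(\rho u\cale)|_{x=0}\ge c\,\vp(t,0)^2$, and the terms $\mu(\ttt/\theta)\psi_x^2 + \kappa(\ttt/\theta^2)\chi_x^2$ on the left furnish the dissipation $c\,\lt{(\psi_x,\chi_x)}^2$ after invoking \eqref{eng-t} and \eqref{p-rho}.

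The heart of the matter, and the step I expect to be the main obstacle, is the term $\ut_x G_1^{(2)} + \ttt_x G_1^{(3)}$, since for $M_+=1$ the factors $\ut_x,\ttt_x$ decay only algebraically. Here I would use the precise expansion \eqref{dst-2}, $(\ut_x,\ttt_x) = \frac{\gamma+1}{2d}(1,\gamma-1)\zt^2 + O(\zt^3 + \dels e^{-cx})$ with $d = \mu + \kappa(\gamma-1)^2 > 0$, and replace $G_1^{(2)},G_1^{(3)}$ by their leading quadratic forms $\bar G_1^{(2)}(\vvp),\bar G_1^{(3)}(\vvp)$ obtained by evaluating the stationary solution at its asymptotic state $(1,-1,1)$. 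The leading contribution is then $\frac{\gamma+1}{2d}\zt^2\bigl(\bar G_1^{(2)} + (\gamma-1)\bar G_1^{(3)}\bigr)$, and the decisive algebraic fact is that the quadratic form $\bar G_1^{(2)} + (\gamma-1)\bar G_1^{(3)}$ is negative definite in $\vvp$. I would verify this by forming its symmetric matrix and checking the leading principal minors; using the sonic value $\cv = 1/\{\gamma(\gamma-1)\}$ one finds they alternate in sign ($-,+,-$), so the form is bounded above by $-c|\vvp|^2$. Since $\frac{\gamma+1}{2d}>0$, this gives $\ut_x G_1^{(2)} + \ttt_x G_1^{(3)} \le -c\,\zt^2|\vvp|^2 + (\text{remainder})$, and the lower bound $\zt^2 \ge c\,\dels^2/(1+\dels x)^2$ from \eqref{zt-1} turns the good term into $-c\,\dels^2\ltat{-2}{\vvp}^2$. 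That the sonic condition $M_+=1$ is indispensable is visible twice here: it forces the non-integrable algebraic decay that rules out a crude bound, and it fixes the coefficients making the form definite.

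It then remains to absorb the remainders. The parts of $\ut_x,\ttt_x$ of order $O(\zt^3 + \dels e^{-cx})$ carry weights whose $L^1_1(\R_+)$ norms are $O(\dels)$, since $\int_{\R_+}(1+x)\zt^3\,dx \le C\dels$ and $\int_{\R_+}(1+x)\dels e^{-cx}\,dx \le C\dels$, whereas $\zt^2$ itself fails to lie in $L^1_1$, which is exactly why its contribution had to be treated by the sign argument rather than by Poincar\'e. For the $O(\zt^3 + \dels e^{-cx})$ quadratic terms I would apply \eqref{poin} (with $\psi(t,0)=\chi(t,0)=0$) to bound them by $C\dels\bigl(\vp(t,0)^2 + \lt{\vvp_x}^2\bigr)$, and for $R_1 = \frac{\kappa}{\theta^2}\ttt_x\chi\chi_x + \frac{2\mu}{\theta}\ut_x\chi\psi_x$ I would use Cauchy--Schwarz with $\zt^2 \le C\dels^2$ and $\zt^2 \le C\dels^2(1+\dels x)^{-2}$ to get $\int_{\R_+}|R_1|\,dx \le \ep\,\dels^2\ltat{-2}{\vvp}^2 + C\dels\lt{(\psi_x,\chi_x)}^2$. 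Integrating \eqref{a02} over $(0,t)\times\R_+$, collecting these bounds, and choosing $\dels$ (and $\ep$) small enough to absorb the $\vp(t,0)^2$, the $\dels^2\ltat{-2}{\vvp}^2$ and the $\lt{(\psi_x,\chi_x)}^2$ contributions into the left-hand side, I arrive at \eqref{e01}; the term $C\dels\int_0^t\lt{\vp_x}^2\,d\tau$ remains on the right because no dissipation for $\vp_x$ is available at the $L^2$ level.
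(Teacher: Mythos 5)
Your proposal is correct and follows essentially the same route as the paper's own proof: both start from the energy identity \eqref{a02}, use the precise expansion \eqref{dst-2} to extract from $\ut_x G_1^{(2)}+\ttt_x G_1^{(3)}$ a sign-definite quadratic form of size $\zt^2|\vvp|^2$ (your form $\bar G_1^{(2)}+(\gamma-1)\bar G_1^{(3)}$ is exactly $-F_1/(2\gamma)$ in the paper's notation), convert it into the dissipation $\dels^2\ltat{-2}{\vvp}^2$ via the lower bound on $\zt$ in \eqref{zt-1}, and absorb the remainders using the Poincar\'e-type inequality \eqref{poin} together with the smallness of $N(T)+\dels$. The only cosmetic differences are that the paper verifies definiteness by the explicit completion of squares \eqref{e03} rather than by Sylvester's criterion, and that it absorbs the $O(\zt^3)|\vvp|^2$ errors directly into the definite term via $\zt\le C\dels$ instead of through your weighted $L^1_1$ bound; both variants are valid.
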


\begin{proof}
Notice that
the solution $(\rho,u,\theta)$ satisfies
\begin{equation}
(\rho,u,\theta) = (1,-1,1) + O(N(t) + \dels),
\label{ee07}
\end{equation}
which follows from (\ref{dst-1}) and $\li{\vvp(t)} \le C N(t)$.
Using the property \eqref{ee07}  and (\ref{dst-1}),
we see that $G_1^{(2)}$ is divided into a main quadratic form 
and residue terms as
\begin{equation*}
G_1^{(2)}
= - \psi^2 - \frac{1}{\gamma} \chi^2 + \frac{\gamma-1}{\gamma} \vp \psi
- \frac{1}{\gamma} \vp \chi
+ O(N(t) + \dels) |\vvp|^2.
\end{equation*}
Hence 
we see 
from  the above expression and (\ref{dst-2})
that 
the first term on the right-hand side of (\ref{a02}) satisfies
\begin{align}
\ut_x G_1^{(2)}
= &
- \frac{\gamma+1}{2 d} \zt^2
\Bigl(
\psi^2 + \frac{1}{\gamma} \chi^2 - \frac{\gamma-1}{\gamma} 
 \vp \psi + \frac{1}{\gamma} \vp \chi 
\Bigr)
\nonumber
\\
& 
\mspace{10mu}
+ O(N(t) + \dels) \zt^2 |\vvp|^2
+ O(\dels) e^{-cx} |\vvp|^2.
\label{e05}
\end{align}
By a similar computation, we have
\begin{equation*}
G_1^{(3)}
=
- \frac{1}{2 \gamma} \vp^2 + \frac{\cv}{2} \chi^2
+ \cv \vp \chi - \cv \psi \chi
+ O(N(t) + \dels) |\vvp|^2,
\end{equation*}
where we have also used the fact that
%\begin{equation*}
%\omega \Bigl( \frac{\rt}{\rho} \Bigr)
%= \frac{1}{2} \vp^2 
%+ O(N(t) + \dels) |\vp|^2,
%\quad
%\omega \Bigl( \frac{\theta}{\ttt} \Bigr)
%= \frac{1}{2} \chi^2 
%+ O(N(t) + \dels) |\chi|^2.
%\end{equation*}
\begin{equation}
\omega(s) 
= \frac{1}{2} (s-1)^2 + O(|s-1|^3).
\label{ee08}
\end{equation}
Therefore, due to (\ref{dst-2}),
the second term on the right-hand side of (\ref{a02})
satisfies
\begin{align}
\ttt_x G_1^{(3)}
= &
- \frac{\gamma+1}{2 d} \zt^2
\Bigl(
\frac{\gamma-1}{2 \gamma} \vp^2 - \frac{1}{2 \gamma} \chi^2
- \frac{1}{\gamma} \vp \chi + \frac{1}{\gamma} \psi \chi
\Bigr)
\nonumber
\\
& 
\mspace{10mu}
+ O(N(t) + \dels) \zt^2 |\vvp|^2
+ O(\dels) e^{-cx} |\vvp|^2.
\label{e06}
\end{align}
Summing up the expressions (\ref{e05}) and (\ref{e06}),
we have
\begin{gather}
\ut_x G_1^{(2)} + \ttt_x G_1^{(3)}
=
- \frac{\gamma+1}{4 \gamma d} \zt^2 F_1 (\vp,\psi,\chi)
+ O(N(t) + \dels) \zt^2 |\vvp|^2
+ O(\dels) e^{-cx} |\vvp|^2,
\label{e02}
\\
F_1 (\vp,\psi,\chi)
:=
(\gamma-1) \vp^2 + 2 \gamma \psi^2 + \chi^2
- 2(\gamma-1) \vp \psi + 2 \psi \chi.
\nonumber
\end{gather}
The quadratic form $F_1(\vp,\psi,\chi)$ is positive definite since
\begin{equation}
F_1 (\vp,\psi,\chi) 
=
(\gamma-1) (\vp-\psi)^2 + (\psi + \chi)^2 + \gamma \psi^2
\ge
c |\vvp|^2.
\label{e03}
\end{equation}
Due to (\ref{dst-2}),
the remaining term $R_1$ is estimated as
\begin{equation}
|R_1|
\le
C \dels ( \zt^2 |\vvp|^2 + |(\psi_x,\chi_x)|^2 + e^{-cx} |\vvp|^2).
\label{e04}
\end{equation}
Therefore, integrating (\ref{a02}) over $(0,t) \times \R_+$,
substituting (\ref{e02}), (\ref{e03}) and (\ref{e04})
in the resultant equality
and letting $N(t) + \dels$ suitably small,
we obtain
\begin{gather*}
\lt{\vvp}^2
+ \int_0^t \bigl( 
\vp(\tau,0)^2 
+ \dels^2 \ltat{-2}{\vvp}^2
+ \lt{(\psi_x,\chi_x)}^2 
\bigr) \, d \tau
%+ \int_0^t \!\! \int_{\R_+} \zt^2 |\vvp|^2 \, dx \, d \tau
\nonumber
\\
\le
C \lt{\vvp_0}^2
%+ C \int_0^t \!\! \int_{\R_+} |R_1| \, dx \, d \tau
%+ C (N(t) + \dels) 
%\int_0^t \!\! \int_{\R_+} \zt^2 |\vvp|^2 \, dx \, d \tau
+ C \dels \int_0^t \!\! \int_{\R_+} e^{-cx} |\vvp|^2 \, dx \, d \tau,
\end{gather*}
where we have used (\ref{zt-1}).
Finally, to estimate the last term on the right-hand side of the
above inequality,
we utilize the Poincar\'e type inequality (\ref{poin}).
Consequently, we arrive at the desired estimate (\ref{e01})
and complete the proof.
\end{proof}

Next we show the estimate for the first order derivative $\vvp_x$.

\begin{lemma}
\label{lm-f}
Suppose that $M_+ = 1$ and 
the same conditions as in Proposition {\rm \ref{pro-1}}
hold.
Then we have
\begin{gather}
\lt{\vvp_x(t)}^2
+ \int_0^t \bigl(
\vp_x(\tau,0)^2
+ \lt{(\vp_x,\psi_{xx},\chi_{xx})(\tau)}^2
\bigr) \, d\tau
\nonumber
\\
\le
C \ho{\vvp_0}^2
+ C (N(t) + \dels) \int_0^t \tilde{D}(\tau)^2 \, d \tau.
\label{f01}
\end{gather}
\end{lemma}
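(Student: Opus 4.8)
The plan is to follow exactly the three-step scheme used for the non-degenerate case in Lemmas \ref{lm-b}, \ref{lm-c} and \ref{lm-d}, reworking each estimate so that the slow (polynomial) decay of the degenerate stationary solution is absorbed into the enlarged dissipation $\tilde{D}(t)$. First I would derive the estimate for $\vp_x$ from the combined identity (\ref{b05}): its boundary term still satisfies (\ref{b06}), so the outflow condition produces $c\,\vp_x(t,0)^2$, while the term $\frac{1}{M_+^2} p\,\vp_x^2$ furnishes the spatial dissipation $\lt{\vp_x}^2$ exactly as before. Next I would multiply (\ref{pt-eq2}) by $-\psi_{xx}$ and (\ref{pt-eq3}) by $-\chi_{xx}$ to obtain the identities (\ref{c02}) and (\ref{d02}), whose left-hand sides carry $\mu\psi_{xx}^2$ and $\kappa\chi_{xx}^2$; the good terms $G_3$ and $G_4$ are handled by Young's inequality precisely as in the proofs of Lemmas \ref{lm-c} and \ref{lm-d}. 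Summing the three resulting inequalities and inserting the $L^2$ estimate (\ref{e01}) in place of (\ref{a01}) would then give (\ref{f01}).

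The essential difference from the non-degenerate case lies in the residue terms $R_2$, $R_3$ and $R_4$, which contain the stationary derivatives $\ut_x$, $\ttt_x$, $\rt_x$ and their second derivatives. For $M_+=1$ these no longer decay exponentially: by (\ref{dst-2}) and (\ref{dst-3}) one has $\ut_x,\ttt_x = O(\zt^2+\dels e^{-cx})$ and $\pd_x^k(\ut,\ttt) = O(\zt^{k+1}+\dels e^{-cx})$. Thus wherever a factor $e^{-cx}|\vvp|^2$ appeared in the bounds (\ref{b09}), and in the corresponding estimates of $R_3$ and $R_4$, it is now replaced by $\zt^2|\vvp|^2$. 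Using $\zt \le C\dels/(1+\dels x)$ from (\ref{zt-1}) one has $\zt^2|\vvp|^2 \le C\dels^2 (1+\dels x)^{-2}|\vvp|^2$, whose integral is exactly $C\dels^2\ltat{-2}{\vvp}^2$, i.e.\ the weighted term that distinguishes $\tilde{D}(t)$ from $D(t)$. For the genuine cross terms, say those of the form $\zt^2|\vp|\,|\vp_x|$ coming from $\ut_x\vp\,\vp_x$, I would split by Young's inequality: the $\vvp$-factor is kept weighted by $\zt^2$ and goes into $\dels^2\ltat{-2}{\vvp}^2$, while the derivative factor acquires an extra power of $\dels$ (since $\zt\le C\dels$) and is absorbed into $\dels\,D(t)^2$. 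The genuinely nonlinear terms, such as $|\psi_x|\,|\vvp_x|^2$, are controlled by $N(t)$ through the Sobolev embedding $\li{\psi_x}\le C\ho{\psi_x}$ exactly as in (\ref{b08}), (\ref{c04}) and (\ref{d04}).

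The main obstacle is precisely the bookkeeping that forces the right-hand side to carry $\tilde{D}(t)^2$ rather than $D(t)^2$: in the non-degenerate case the exponential factor let every stationary-derivative contribution be reduced, via the Poincar\'e inequality (\ref{poin}), to $\vp(t,0)^2+\lt{\vvp_x}^2$, but here the polynomially decaying factor $\zt^2$ yields only the weighted norm $\dels^2\ltat{-2}{\vvp}^2$, which is not part of $D(t)$. This is why the estimate (\ref{e01}) already produces $\dels^2\ltat{-2}{\vvp}^2$ on its left and why $\tilde{D}$ is the natural dissipation. Once the residue integrals are organized in this way, every term on the right is bounded by either $C\,\ho{\vvp_0}^2$ (after substituting (\ref{e01})) or $C(N(t)+\dels)\int_0^t \tilde{D}(\tau)^2\,d\tau$, and choosing $N(t)+\dels$ small completes the argument. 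I expect the delicate point to be verifying that the second-derivative contributions $\ut_{xx}\vp$ and $\rt_{xx}\psi$ in $f_2$, which carry the factor $\zt^3$, are likewise dominated by $\dels\,\tilde{D}(t)^2$ after one application of Young's inequality.
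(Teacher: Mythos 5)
Your proposal is correct and follows essentially the same route as the paper: the paper likewise reuses the identities and boundary/dissipation estimates of Lemmas \ref{lm-b}--\ref{lm-d} verbatim, and confines the new work to bounding $R_2$, $R_3$, $R_4$ via (\ref{dst-2})--(\ref{dst-3}), so that the $\zt^2|\vvp|^2$ contributions (with the extra factor $\zt\le C\dels$ gained from Young's inequality) are absorbed into $\dels^2\ltat{-2}{\vvp}^2$, hence into $C(N(t)+\dels)\tilde{D}(t)^2$, with (\ref{poin}) handling the residual $e^{-cx}$ terms and (\ref{e01}) replacing (\ref{a01}) at the final step.
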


\begin{proof}
In the present proof, we only show the estimate for the
remained terms $R_2$, $R_3$ and $R_4$.
The other part of the derivation of (\ref{f01}) 
is almost same as that of the non-degenerate case,
so we omit the details.
Using (\ref{dst-3}), we see
\begin{align*}
|(R_2,R_3,R_4)|
\le {}&
C |(\psi_x,\chi_x)| |\vvp_x| |(\vvp_x, \psi_{xx}, \chi_{xx})|
+ C \psi_x^2 |\vvp_x|^2
\\
&
%\mspace{20mu}
{}+ C \dels \zt^2 |\vvp|^2
+ C \dels |(\vvp_x,\psi_{xx}, \chi_{xx})|^2
+ C \dels e^{-cx} |\vvp|^2.
\end{align*}
By computations similar to (\ref{d05}) and (\ref{d06}),
and by using (\ref{zt-1}), we have
\begin{equation*}
\int_{\R_+} |(R_2,R_3,R_4)| \, dx
\le
C (N(t) + \dels) \tilde{D}(t)^2.
\end{equation*}
Therefore, following the same procedure of the derivation of
(\ref{b01}), (\ref{c01}) and (\ref{d01})
%(\ref{dd01})
and using the above estimate for the remaining terms,
we obtain the desired estimate (\ref{f01}).
\end{proof}

%\begin{proof}%
%[\underline{Proof of Proposition {\rm \ref{pro-1}} for $M_+ = 1$}]
%Summing up the estimates (\ref{e01}) and (\ref{f01})
%and letting $N(T) + \dels$ suitably small,
%we have
%\[
%\ho{\vvp(t)}^2
%+ \int_0^t \tilde{D}(\tau)^2 \, d \tau
%\le
%C \ho{\vvp_0}^2
%\]
%which immediately yields the desired estimate (\ref{apri-1})
%for the case $M_+ =1$.
%Consequently we complete the proof of Proposition \ref{pro-1}.
%\end{proof}

\subsection{Proof of Theorem {\rm \ref{th-sb}}}
\label{prf-th-sb}

This section is devoted to the proof of Theorem \ref{th-sb}.
Firstly we prove the existence of the solution in the
sense of (\ref{global-sp}) globally in time.
Since the existence time $T_0$ in Lemma \ref{local-ext}
depends on the H\"older norm of the initial data,
we have to show the {\it a priori} estimate in the H\"older norm.
Precisely we prove
\begin{equation}
\holtx{1+\sigma/2}{1+\sigma}{\rho},
\
\holtx{1+\sigma/2}{2+\sigma}{(u,\theta)}
\le
C(T)
\label{hol-est}
\end{equation}
for the solution $(\rho,u,\theta)$ 
satisfying $(\rho,u,\theta) - (\rt,\ut,\ttt) \in X(0,T)$,
% and $T > 0$,
where $C(T)$ is a positive constant depending on 
$T$,
$\holx{1+\sigma}{\rho_0}$,
$\holx{2+\sigma}{(u_0,\theta_0)}$
and 
$\ho{(\vp_0,\psi_0,\chi_0)}$.
To obtain the estimate (\ref{hol-est}),
we rewrite the system (\ref{d-nse})  in the
Eulerian coordinate into that in the Lagrangian mass
coordinate,
and then apply the Schauder theory for parabolic equations
studied in \cite{friedman64}
with the aid of the $H^1$ uniform estimate (\ref{apri-1}).
Since the derivation of the H\"older estimate (\ref{hol-est}) is
same as that in \cite{knz03} studying the stability of the
stationary solution for an isentropic model,
we omit the details of the proof.
Therefore, combining Lemma \ref{local-ext} and the estimate (\ref{hol-est})
by using the standard continuation argument,
we obtain the existence of the solution globally in time.
Moreover, we see that the solution verifies
\begin{equation}
\sup_{t \in [0,\infty)} \ho{\vvp(t)}^2
+
\int_0^\infty D(t) \, d t
\le
C
\ho{\vvp_0}^2.
\label{apri-3}
\end{equation}

Next we show the stability (\ref{conv-1}).
For this purpose, it suffices to show that
\begin{equation*}
\lt{\vvp_x(t)}
\to 0
\; \ \text{as} \; \
t \to \infty
\end{equation*}
since we see $\li{\vvp(t)} \le C \lt{\vvp(t)}^{1/2} \lt{\vvp_x(t)}^{1/2}$
and $\lt{\vvp(t)} \le C$ due to the $H^1$ uniform estimate (\ref{apri-1}).
Let $I(t) := \lt{\vp_x(t)}^2$.
By a similar computation to \cite{knz03}, we have
\[
\Bigl|
\frac{d}{dt} I(t)
\Bigr|
\le
C D(t)^2,
\]
which gives $\frac{d}{dt} I \in L^1(0,\infty)$ owing to (\ref{apri-3}).
Combining this fact with $I \in L^1(0,\infty)$, 
which is a direct consequence of (\ref{apri-3}),
we have $I(t) \to 0$, i.e.,
$\lt{\vp_x(t)} \to 0$ as $t \to \infty$.
The convergence $\lt{(\psi_x,\chi_x)(t)} \to 0$ is proved
in the similar computations.
Consequently, 
we prove (\ref{conv-1}) and complete the proof of 
Theorem \ref{th-sb}.

\section{Weighted energy estimate}
\label{w-est}

In this section,
we show the proof of Theorem \ref{th-cv}.
Precisely, we obtain convergence rates of the solution  
toward the stationary solution by using a time and space
weighted energy method.

\subsection{Estimates for supersonic flow}
\label{w-est-nd}

This section is devoted to showing the convergence (\ref{conv-sp})
for the case $M_+ > 1$.
To this end,
we define weighted norm $E_\alpha(t)$ and $D_\alpha(t)$ by
\begin{gather*}
E_\alpha(t)^2
:=
\ho{\vvp(t)}^2
+ \lta{\alpha}{\vvp(t)}^2,
\\
D_\alpha(t)^2
:=
D(t)^2 + \alpha \lta{\alpha-1}{\vvp(t)}^2
+ \lta{\alpha}{(\psi_x,\chi_x)(t)}^2
\end{gather*}
and obtain the weighted energy estimates summarized in
the next proposition.

\begin{proposition}
\label{pro-3}
We assume that $M_+ > 1$ and \eqref{sm-del} hold.
Let $\vvp = (\vp,\psi,\chi) \in X (0,T)$ be a
solution to \eqref{pt-eq}, \eqref{pt-ic} and \eqref{pt-bc}
satisfying $\vvp \in C([0,T] ; L^2_\alpha(\R_+))$
for certain constants $\alpha > 0$ and $T > 0$.
Then there exist positive constant $\ep_4$ and $C$ independent
of $T$ such that if $N(T) + \dels \le \ep_4$,
then the solution $\vvp$ satisfies the following
estimates
\begin{equation}
(1+t)^j E_{\alpha - j}(t)^2
+ \int_0^t (1+\tau)^j D_{\alpha-j}(\tau)^2 \, d \tau
\le
C 
E_\alpha(0)^2,
%\left(
%\ho{\vvp_0}^2 + \lta{\alpha}{\vvp_0}^2
%\right),
\label{apri-3a}
\end{equation}
for an arbitrary integer $j = 0,\dots,[\alpha]$
and
\begin{equation}
(1+t)^\xi E_{0}(t)^2
+ \int_0^t (1+\tau)^\xi D_{0}(\tau)^2 \, d \tau
\le
C 
E_\alpha(0)^2
%\left(
%\ho{\vvp_0}^2 + \lta{\alpha}{\vvp_0}^2
%\right) 
(1+t)^{\xi-\alpha}
\label{apri-3b}
\end{equation}
for an arbitrary $\xi > \alpha$.
\end{proposition}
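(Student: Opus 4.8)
The plan is to reduce the whole proposition to a single family of weighted energy inequalities and then to trade spatial decay for temporal decay. Concretely, I would first show that for every weight exponent $\beta\in[0,\alpha]$ the perturbation satisfies, once $N(T)+\dels$ is taken small,
\[
\frac{d}{dt}E_\beta(t)^2 + c\,D_\beta(t)^2 \le 0 ,
\]
and then promote this to the algebraically time-weighted bounds \eqref{apri-3a} by an induction on the integer power $j$, finally extracting the sharp fractional rate \eqref{apri-3b} by one further continuation with a non-integer power $\xi$.

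To obtain the basic weighted estimate I would revisit the energy identity \eqref{a02} together with the first-order identities \eqref{b05}, \eqref{c02} and \eqref{d02} underlying Lemmas \ref{lm-b}--\ref{lm-d}, now multiplying each by the algebraic weight $(1+x)^\beta$ before integrating over $\R_+$. Integrating the convective flux $G_1^{(1)}+B_1$ by parts against $(1+x)^\beta$ reproduces the boundary dissipation $\vp(t,0)^2$ of \eqref{a03} and, in addition, the interior term $\beta\int_{\R_+}(1+x)^{\beta-1}G_1^{(1)}\,dx$. Since $u<0$ everywhere by \eqref{p-rho}, the leading part $-\rho u\,\cale$ of $G_1^{(1)}$ is a positive multiple of $|\vvp|^2$; this is exactly where the supersonic hypothesis $M_+>1$ enters, guaranteeing that the coupling through $-\frac{1}{M_+^2}(p-\pt)\psi$ does not destroy positivity, so that the full convective quadratic form is positive definite and produces the coercive dissipation $c\,\beta\,\lta{\beta-1}{\vvp}^2$. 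The stationary source terms carry the factor $e^{-cx}$ from \eqref{dc-sp}, so multiplication by $(1+x)^\beta$ leaves them integrable, and the Poincar\'e inequality \eqref{poin} bounds their contribution by $C\dels(\vp(t,0)^2+\lt{\vvp_x}^2)$. Combining the weighted $L^2$ and $H^1$ computations with the unweighted estimate \eqref{apri-1} of Proposition \ref{pro-1}, and absorbing the cubic and $O(\dels)$ terms, yields the differential inequality above and, in particular, the case $j=0$ of \eqref{apri-3a}.

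For \eqref{apri-3a} I would induct on $j$. Multiplying the inequality for $\beta=\alpha-j$ by $(1+t)^j$ and integrating gives
\[
(1+t)^jE_{\alpha-j}(t)^2 + c\int_0^t(1+\tau)^jD_{\alpha-j}(\tau)^2\,d\tau \le E_{\alpha-j}(0)^2 + j\int_0^t(1+\tau)^{j-1}E_{\alpha-j}(\tau)^2\,d\tau .
\]
The production integral is handled by the pointwise comparison $E_{\alpha-j}(t)^2\le C\,D_{\alpha-j+1}(t)^2$, valid precisely because $\alpha-j\ge0$: then the coercive convection term $(\alpha-j+1)\lta{\alpha-j}{\vvp}^2$ in $D_{\alpha-j+1}$ dominates both the full undifferentiated norm $\lt{\vvp}^2$ and $\lta{\alpha-j}{\vvp}^2$, while $\lt{\vvp_x}^2\le D(t)^2$. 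Since $\alpha-(j-1)=\alpha-j+1$, the inductive hypothesis at level $j-1$ bounds $\int_0^t(1+\tau)^{j-1}D_{\alpha-j+1}^2\,d\tau$ by $CE_\alpha(0)^2$, so the production term is absorbed and the induction runs up to $j=[\alpha]$.

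The estimate \eqref{apri-3b} is the most delicate part, and I expect the balancing of fractional time exponents there to be the main obstacle. At weight $\beta=0$ there is no coercive convection term, so the undifferentiated norm $\lt{\vvp}^2$ is no longer controlled by $D_0^2$. Multiplying $\frac{d}{dt}E_0^2+cD_0^2\le0$ by $(1+t)^\xi$ and integrating leaves a production term $\xi\int_0^t(1+\tau)^{\xi-1}E_0^2\,d\tau$; the contribution of $\lt{\vvp_x}^2\le D_0^2$ being controlled by the dissipation integrals of \eqref{apri-3a}, the essential piece is $\int_0^t(1+\tau)^{\xi-1}\lt{\vvp}^2\,d\tau$. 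I would bound $\lt{\vvp}^2$ by logarithmic-convexity interpolation between the weight-$(\alpha-[\alpha]-1)$ and weight-$(\alpha-[\alpha])$ norms — the former sitting inside the dissipation $D_{\alpha-[\alpha]}$ and the latter already decaying at rate $[\alpha]$ by the case $j=[\alpha]$ of \eqref{apri-3a} — and then apply Young's inequality. One resulting term is absorbed into the left-hand dissipation (equivalently into $\sup_{0\le\tau\le t}(1+\tau)^\xi E_0(\tau)^2$ via a standard continuation argument), while the other reproduces exactly the factor $(1+t)^{\xi-\alpha}$. It is this exponent balancing that forces $\xi>\alpha$ and constitutes the crux of the proposition; the weighted energy computations themselves are routine once the convective dissipation has been identified.
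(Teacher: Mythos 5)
Your weighted $L^2$ inequality and the integer induction for \eqref{apri-3a} reproduce the paper's own route: the coercive term $\beta\lta{\beta-1}{\vvp}^2$ does come from integrating $w_x G_1^{(1)}$, whose quadratic part $F_2$ in \eqref{g10} is positive definite exactly because $M_+>1$ (see \eqref{g05}), and your comparison $E_{\alpha-j}(t)^2\le C\,D_{\alpha-j+1}(t)^2$ is precisely the mechanism behind the induction on the pair $(\beta,\xi)$ that the paper borrows from \cite{km-85} and \cite{ns-98}. (One inessential deviation: you also put the spatial weight on the derivative identities \eqref{b05}, \eqref{c02}, \eqref{d02}, whereas the paper leaves those only time-weighted as in Lemma \ref{lm-h}; this is workable --- it is what Section \ref{w-est-d} does for the transonic case --- but unnecessary here, since $E_\alpha$ and $D_\alpha$ contain no weighted derivative norms.)

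The genuine gap is in your final step for \eqref{apri-3b}, which you rightly call the crux: as proposed it fails whenever $\alpha\notin\mathbb{Z}$. Set $s:=\alpha-[\alpha]\in(0,1)$, so your interpolation reads $\lt{\vvp}^2\le\lta{s-1}{\vvp}^{2s}\,\lta{s}{\vvp}^{2(1-s)}$. First, the term carrying $\lta{s-1}{\vvp}^2$ cannot be ``absorbed into the left-hand dissipation'': at $\beta=0$ the dissipation is $D_0^2=D^2+\lt{(\psi_x,\chi_x)}^2$, which contains \emph{no} zero-order norm of $\vvp$ at all, and the alternative absorption into $\sup_\tau(1+\tau)^{\xi}E_0(\tau)^2$ forces a time exponent on the other factor that overshoots. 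Second, and decisively, \emph{pointwise} Young applied to $(1+\tau)^{\xi-1}\lta{s-1}{\vvp}^{2s}\lta{s}{\vvp}^{2(1-s)}$ cannot produce $(1+t)^{\xi-\alpha}$: the only information available on the first factor is the dissipation bound $\int_0^t(1+\tau)^{[\alpha]}\lta{s-1}{\vvp}^2\,d\tau\le CE_\alpha(0)^2$ from \eqref{apri-3a} with $j=[\alpha]$, so its time weight must be kept at $[\alpha]$, which forces the weight $(1+\tau)^{(\xi-1-[\alpha]s)/(1-s)}$ onto the second factor and yields, after integration, the rate $(1+t)^{(\xi-\alpha)/(1-s)}$ --- strictly worse than $(1+t)^{\xi-\alpha}$; the opposite splitting loses control of the first factor altogether. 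What closes the argument is H\"older's inequality \emph{in the time integral}, not Young:
\[
\int_0^t(1+\tau)^{\xi-1}\lt{\vvp}^2\,d\tau
\le
\Bigl(\int_0^t(1+\tau)^{[\alpha]}\lta{s-1}{\vvp}^2\,d\tau\Bigr)^{s}
\Bigl(\int_0^t(1+\tau)^{\frac{\xi-1-[\alpha]s}{1-s}}\lta{s}{\vvp}^2\,d\tau\Bigr)^{1-s},
\]
where the first factor is $\le(CE_\alpha(0)^2)^{s}$ and the second, using the decay $\lta{s}{\vvp}^2\le E_{s}^2\le CE_\alpha(0)^2(1+\tau)^{-[\alpha]}$ from \eqref{apri-3a} and the hypothesis $\xi>\alpha$, is $\le(CE_\alpha(0)^2)^{1-s}(1+t)^{\xi-\alpha}$. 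The outer power $1-s$ kept outside the integral is exactly what tames the exponent; this is the Kawashima--Matsumura/Nishikawa device that the paper's phrase ``induction with respect to $\beta$ and $\xi$'' refers to. (The remaining production term $\xi\int_0^t(1+\tau)^{\xi-1}D_0^2\,d\tau$ likewise needs either a unit-step induction in $\xi$ or a H\"older--Young splitting between exponents $[\alpha]$ and $\xi$ followed by absorption into the left-hand dissipation; your sketch covers it only when $\xi-1\le[\alpha]$.) As written, your final step is correct only for integer $\alpha$, where $s=0$ and the interpolation is vacuous.
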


The proof of Proposition \ref{pro-3} is based on 
the time and space weighted estimate of $\vvp$ in $L^2 (\R_+)$
%in Lemma \ref{lm-g}
and the time weighted estimate of $\vvp_x$.
% in Lemma \ref{lm-h}.

\begin{lemma}
\label{lm-g}
Suppose that the same conditions as in Proposition {\rm \ref{pro-3}}
hold.
Then we have
\begin{align}
 &
(1+t)^\xi \lta{\beta}{\vvp(t)}^2
+ \int_0^t (1+\tau)^\xi \left(
\vp(\tau,0)^2
+ \beta \lta{\beta-1}{\vvp(\tau)}^2
+ \lta{\beta}{(\psi_x,\chi_x)(\tau)}^2
\right) d \tau
\nonumber
\\
& \mspace{20mu}
\le
C \lta{\beta}{\vvp_0}^2
+ C \xi \int_0^t (1+\tau)^{\xi-1} \lta{\beta}{\vvp(\tau)}^2 \, d \tau
+ C \dels \int_0^t (1+\tau)^\xi \lt{\vp_x(\tau)}^2 \, d \tau
\label{g01}
\end{align}
for arbitrary $\beta \in [0,\alpha]$ and $\xi \ge 0$.
\end{lemma}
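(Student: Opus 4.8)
The plan is to test the energy identity \eqref{a02}, already derived in the proof of Lemma~\ref{lm-a}, against the time-and-space weight $(1+\tau)^\xi(1+x)^\beta$ and integrate over $(0,t)\times\R_+$, reading off each term. For the temporal term I would integrate by parts in $\tau$; using the equivalence $c|\vvp|^2\le\cale\le C|\vvp|^2$ from \eqref{eng-t} and $\rho\sim1$ from \eqref{p-rho}, this yields the leading quantity $(1+t)^\xi\lta{\beta}{\vvp(t)}^2$ on the left together with the two contributions $C\lta{\beta}{\vvp_0}^2$ and $C\xi\int_0^t(1+\tau)^{\xi-1}\lta{\beta}{\vvp}^2\,d\tau$ on the right of \eqref{g01}.

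The spatial flux term $-\int_{\R_+}(1+x)^\beta(G_1^{(1)}+B_1)_x\,dx$ I would integrate by parts in $x$. Its boundary part reproduces, exactly as in \eqref{a03}, the good term $c\,\vp(\tau,0)^2$ (using $\psi(\tau,0)=\chi(\tau,0)=0$). The new ingredient is the weight-derivative term $\beta\int_{\R_+}(1+x)^{\beta-1}(G_1^{(1)}+B_1)\,dx$. Here the decisive structural fact is that, in the supersonic regime, the energy flux $G_1^{(1)}=-\rho u\cale-\tfrac{1}{M_+^2}(p-\pt)\psi$ is positive definite in $\vvp$ to leading order: its associated $3\times3$ quadratic form has all leading principal minors positive, the binding condition being $1-M_+^{-2}>0$, i.e. $M_+>1$. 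Hence $\beta\int(1+x)^{\beta-1}G_1^{(1)}\,dx\ge c\,\beta\lta{\beta-1}{\vvp}^2$, which is precisely the new weighted dissipative quantity on the left of \eqref{g01}; simultaneously the weighted diffusion $\mu\tfrac{\ttt}{\theta}\psi_x^2+\kappa\tfrac{\ttt}{\theta^2}\chi_x^2$ supplies $c\,\lta{\beta}{(\psi_x,\chi_x)}^2$.

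The forcing $\ut_xG_1^{(2)}+\ttt_xG_1^{(3)}+R_1$ I would treat as in \eqref{a04}. Since $|\ut_x|,|\ttt_x|\le C\dels e^{-cx}$ by \eqref{dc-sp}, the algebraic weight $(1+x)^\beta$ is swallowed by the exponential, so after invoking the Poincar\'e-type inequality \eqref{poin} with $w=(1+x)^\beta e^{-cx}\in L^1_1(\R_+)$ these terms are bounded by $C\dels\bigl(\vp(\tau,0)^2+\lt{\vvp_x}^2\bigr)$. The $\vp(\tau,0)^2$ and $\lt{(\psi_x,\chi_x)}^2$ portions are absorbed on the left, and the unweighted $\lt{\vp_x}^2$ remains as the last term $C\dels\int_0^t(1+\tau)^\xi\lt{\vp_x}^2\,d\tau$ of \eqref{g01}; the residual quadratic errors are then absorbed under the smallness $N(T)+\dels\le\ep_4$.

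The step I expect to be the main obstacle is the interaction of the weight with the dissipative flux $B_1=\mu\psi\psi_x+\tfrac{\kappa}{\theta}\chi\chi_x$. Integrating $\beta\int(1+x)^{\beta-1}B_1\,dx$ once more by parts produces a cross term of the form $-\tfrac{\mu\beta(\beta-1)}{2}\int(1+x)^{\beta-2}\psi^2\,dx$ (and its $\chi$-analogue), carrying a coefficient that grows with $\beta$ and a weight one power below the good term. The delicate point is to dominate it by the good weighted term $c\,\beta\lta{\beta-1}{\vvp}^2$ and the weighted dissipation $c\,\lta{\beta}{(\psi_x,\chi_x)}^2$ \emph{uniformly} for every $\beta\in[0,\alpha]$, with no upper bound on $\beta$. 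This is exactly where the strict negativity of all characteristic speeds in the supersonic case (equivalently, the strict positive-definiteness of $G_1^{(1)}$) is indispensable, and where a Hardy/Poincar\'e-type weighted inequality---controlling $\int(1+x)^{\beta-2}\psi^2$ by $\int(1+x)^{\beta}\psi_x^2$ together with $\int(1+x)^{\beta-1}\psi^2$---is the right tool. The transonic flow lacks this strict gap, which is what later forces the restriction $\alpha<2(1+\sqrt{2})$.
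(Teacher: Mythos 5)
Your overall skeleton coincides with the paper's: multiply the energy identity \eqref{a02} by $w=(1+\tau)^\xi(1+x)^\beta$, integrate, keep the boundary contribution $c\,\vp(\tau,0)^2$, extract the weighted dissipative term $c\,\beta\lta{\beta-1}{\vvp}^2$ from $w_x G_1^{(1)}$ via the positive definiteness of the leading quadratic form (the paper's $F_2$, whose definiteness is exactly the supersonic condition $M_+>1$), and bound the forcing $\ut_x G_1^{(2)}+\ttt_x G_1^{(3)}+R_1$ using the exponential decay \eqref{dc-sp} together with the Poincar\'e type inequality \eqref{poin}. Up to that point you are reproducing the paper's argument.

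The genuine gap is in your treatment of $w_x B_1$, which you correctly identify as the main obstacle but do not close. After your second integration by parts, the term $\tfrac{\mu\beta(\beta-1)}{2}(1+\tau)^\xi\int_{\R_+}(1+x)^{\beta-2}\psi^2\,dx$ (for $\beta>1$, plus its $\chi$-analogue) must be dominated, and neither tool you invoke does it: bounding $(1+x)^{\beta-2}\le(1+x)^{\beta-1}$ and absorbing into $c\,\beta\lta{\beta-1}{\vvp}^2$ requires $\mu(\beta-1)/2<c$, i.e.\ a restriction on $\beta$ in terms of the fixed small constant $c$; while the Hardy inequality $\int_{\R_+}(1+x)^{\beta-2}\psi^2\,dx\le\tfrac{4}{(\beta-1)^2}\int_{\R_+}(1+x)^{\beta}\psi_x^2\,dx$ (legitimate, since $\psi(\tau,0)=0$) converts the term into $\tfrac{2\mu\beta}{\beta-1}\int_{\R_+}(1+x)^{\beta}\psi_x^2\,dx$, whose coefficient exceeds the available dissipation coefficient $\approx\mu$ for every $\beta>1$, so it can never be absorbed; interpolating between the two options only relocates the loss. (Your phrase ``no upper bound on $\beta$'' is also off the mark --- $\beta\le\alpha$ is bounded --- but the difficulty is real because the absorption constants do not improve from that bound.) The paper sidesteps the whole issue: it never integrates $w_x B_1$ by parts, but estimates it by the Schwarz inequality as in \eqref{g08}, which produces the single problematic term $C\beta\int_0^t(1+\tau)^\xi\lta{\beta-1}{(\psi_x,\chi_x)}^2\,d\tau$, and then closes by \emph{induction on the weight exponent} $\beta$: for $\beta\in[0,1]$ that term is controlled by the already-established unweighted case $\beta=0$ (cf.\ \eqref{h01}), and in general by the estimate \eqref{g01} at exponent $\beta-1$, whose left-hand side contains exactly $\int_0^t(1+\tau)^\xi\lta{\beta-1}{(\psi_x,\chi_x)}^2\,d\tau$. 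This induction is the missing idea; it would in fact also rescue your integration-by-parts variant, since $\int_0^t(1+\tau)^\xi\lta{\beta-2}{\vvp}^2\,d\tau$ appears (with factor $\beta-1$) on the left-hand side of \eqref{g01} at exponent $\beta-1$, but without it your argument only proves the lemma for small weight exponents, not for all $\beta\in[0,\alpha]$.
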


\begin{proof}
Multiplying (\ref{a02}) by a weight function
$w(t,x) := (1+t)^\xi (1+x)^\beta$,
we have
\begin{gather}
(w \rho \cale)_t
- \bigl\{
w (G_1^{(1)} + B_1)
\bigr\}_x
+ w_x G_1^{(1)}
+ w \Bigl(
\mu \frac{\ttt}{\theta} \psi_x^2
+ \kappa \frac{\ttt}{\theta^2} \chi_x^2
\Bigr)
\nonumber
\\
=
w_t \rho \cale
- w_x B_1
+ w(\ut_x G_1^{(2)} + \ttt_x G_1^{(3)} + R_1).
\label{g02}
\end{gather}
The integral of the second term on the left-hand side of (\ref{g02})
is estimated from below as
\begin{equation}
- \int_{\R_+} 
\bigl\{
w (G_1^{(1)} + B_1)
\bigr\}_x \, dx
\ge
c (1+t)^\xi \vp(t,0)^2.
\label{g03}
\end{equation}
Due to (\ref{dst-1}), (\ref{ee07}) and (\ref{ee08}),
the term $G_1^{(1)}$ is divided into a quadratic form and remaining terms as
\begin{gather}
G_1^{(1)}
=
\frac{1}{2 M_+^2 \gamma (\gamma-1)} F_2 (\vp,\psi,\chi)
+ O(N(t) + \dels) |\vvp|^2,
\label{g04}
\\
F_2 (\vp,\psi,\chi)
:=
(\gamma-1) \vp^2 + M_+^2 \gamma (\gamma-1) \psi^2 + \chi^2
-2 (\gamma-1) (\vp + \chi) \psi.
\label{g10}
\end{gather}
Notice that the quadratic form $F_2$ is positive definite 
owing to the assumption
 $M_+ > 1$
since
\begin{equation}
F_2 (\vp,\psi,\chi)
 =
(\gamma-1) (\vp - \psi)^2
+ \{(\gamma-1) \psi - \chi\}^2
+ \gamma (\gamma-1) (M_+^2-1) \psi^2
\ge
c |\vvp|^2.
\label{g05}
\end{equation}
Thus, substituting (\ref{g05}) in (\ref{g04}),
we have the estimate of the third term on the left-hand side
of (\ref{g02}) from below as
\begin{equation}
\int_{\R_+} w_x G_1^{(1)} \, dx
\ge
\{ c - C (N(t) + \dels) \}
\beta (1+t)^\xi \lta{\beta-1}{\vvp}^2.
\label{g06}
\end{equation}
The first and second terms on the right-hand side of (\ref{g02})
are estimated with the aid of the Schwarz inequality as
\begin{gather}
\int_{\R_+} |w \rho \cale| \, d x
\le
C \xi (1+t)^{\xi-1} \lta{\beta}{\vvp}^2,
\label{g07}
\\
\int_{\R_+} |w_x B_1| \, dx
\le
C \beta (1+t)^\xi 
\bigl(
\ep \lta{\beta-1}{\vvp}^2 
+ C_\ep \lta{\beta-1}{(\psi_x,\chi_x)}^2
\bigr).
\label{g08}
\end{gather}
%for a constant $\ep > 0$,
%where $C_\ep > 0$ is a constant depending on $\ep$.
In the similar way to the derivation of (\ref{a04}),
we estimate the remaining terms in (\ref{g02}),
by using (\ref{dc-sp}) and (\ref{poin}),
 as
\begin{equation}
\int_{\R_+} w |\ut_x G_1^{(2)} + \ttt_x G_1^{(3)} + R_1| \, dx
\le
C \dels (1+t)^\xi (\vp(t,0)^2 + \lt{\vvp_x}^2).
\label{g09}
\end{equation}
Therefore, integrating (\ref{g02}) over $(0,t) \times \R_+$,
substituting (\ref{g03}) and (\ref{g06}) - (\ref{g09})
in the resultant equality
and letting $\ep$ and $N(t) + \dels$ sufficiently small,
we arrive at
\begin{align*}
&
(1+t)^\xi \lta{\beta}{\vvp}^2
+ \int_0^t (1+\tau)^\xi \bigl(
\vp(\tau,0)^2 
+ \beta \lta{\beta-1}{\vvp}^2
+ \lta{\beta}{(\psi_x,\chi_x)}^2
\bigr) \, d \tau
\nonumber
\\
& \mspace{10mu}
\le
C \lta{\beta}{\vvp_0}^2
+ C \xi \int_0^t (1+\tau)^{\xi-1} \lta{\beta}{\vvp}^2 \, d \tau
+ C \dels \int_0^t (1+\tau)^\xi \lt{\vp_x}^2 \, d \tau
\nonumber
\\
& \mspace{40mu}
+ C \beta \int_0^t (1+\tau)^\xi \lta{\beta-1}{(\psi_x,\chi_x)}^2
\, d \tau.
\end{align*}
We finally apply induction with respect to $\beta$ to estimate
the last term on the right-hand side of the above inequality.
This computation yields the desired estimate (\ref{g01}).
Consequently, we complete the proof.
\end{proof}

Letting $\beta=0$ in (\ref{g01}), 
we have the time weighted estimate
\begin{align}
&
(1+t)^\xi \lt{\vvp(t)}^2
+ \int_0^t (1+\tau)^\xi
\bigl(
\vp(\tau,0)^2
+ \lt{(\psi_x,\chi_x)(\tau)}^2
\bigr) \, d \tau
\nonumber
\\
& \mspace{20mu}
\le
C \lt{\vvp_0}^2
+ C \xi \int_0^t (1+\tau)^{\xi-1} \lt{\vvp(\tau)}^2 \, d \tau
+ C \dels \int_0^t (1+\tau)^\xi \lt{\vp_x(\tau)}^2 \, d \tau
\label{h01}
\end{align}
for an arbitrary $\xi \ge 0$.

We state below the time weighted estimate for the first order
derivative $\vvp_x$.
Since the proof of this estimate is almost same as that of (\ref{dd01}),
we omit the details 
and only summarize the result in the next lemma.

\begin{lemma}
\label{lm-h}
Suppose that the same conditions as in Proposition {\rm \ref{pro-3}}
hold.
Then we have
\begin{align}
 &
(1+t)^\xi \lt{\vvp_x(t)}^2
+ \int_0^t (1+\tau)^\xi
\left(
\vp_x(\tau,0)^2
+ \lt{(\vp_x,\psi_{xx},\chi_{xx})(\tau)}^2
\right) d \tau
\nonumber
\\
& \mspace{30mu}
\le
C \ho{\vvp_0}^2
+ C \xi \int_0^t (1+\tau)^{\xi-1} \ho{\vvp(\tau)}^2 \, d \tau
\nonumber
\\
& \mspace{80mu}
+ C (N(t) + \dels) \int_0^t (1+\tau)^\xi D(\tau)^2 \, d \tau
\label{h02}
\end{align}
for an arbitrary $\xi \ge 0$.
\end{lemma}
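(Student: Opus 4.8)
The plan is to derive time-weighted analogues of the three first-order estimates \eqref{b01}, \eqref{c01} and \eqref{d01} and then add them, mirroring exactly the assembly of \eqref{dd01}; the proof differs from the non-degenerate one only in the handling of the $t$-dependent weight. Concretely, I would start from the energy identities \eqref{b05}, \eqref{c02} and \eqref{d02} for $\vp_x$, $\psi_x$ and $\chi_x$, multiply each by the time weight $(1+t)^\xi$, and integrate over $(0,t)\times\R_+$. The one structural novelty is that the weight depends on $t$, so in the time-derivative terms I would use the Leibniz splitting
\[
(1+t)^\xi \pd_t \mathcal{G}
=
\pd_t\bigl\{(1+t)^\xi \mathcal{G}\bigr\}
- \xi (1+t)^{\xi-1}\mathcal{G},
\]
where $\mathcal{G}$ denotes each of the densities $\frac{\mu}{2}\vp_x^2+\rho^2\vp_x\psi$, $\frac{1}{2}\rho\psi_x^2$ and $\frac{\cv}{2M_+^2}\rho\chi_x^2$. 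The first piece integrates in time to the leading term $(1+t)^\xi\lt{\vvp_x(t)}^2$ and the data term $C\ho{\vvp_0}^2$; the second piece is the genuinely new contribution, and since all three densities are bounded in $L^1(\R_+)$ by $C\ho{\vvp}^2$, it produces precisely $C\xi\int_0^t(1+\tau)^{\xi-1}\ho{\vvp(\tau)}^2\,d\tau$, the second term on the right of \eqref{h02}.

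For the remaining terms I would follow the non-degenerate computation verbatim, carrying the harmless factor $(1+t)^\xi$ through. The spatial flux terms integrate to boundary contributions at $x=0$ which, by the outflow condition exactly as in \eqref{b06}, furnish the nonnegative boundary dissipation $c(1+t)^\xi\vp_x(t,0)^2$ on the left. The linear dissipation $\frac{1}{M_+^2}p\,\vp_x^2$, $\mu\psi_{xx}^2$ and $\kappa\chi_{xx}^2$, after absorbing $G_2$, $G_3$, $G_4$ by the Young-type bounds \eqref{b07}, \eqref{c03} and \eqref{d03} with $\ep$ small, yields the weighted dissipation $\int_0^t(1+\tau)^\xi\lt{(\vp_x,\psi_{xx},\chi_{xx})(\tau)}^2\,d\tau$. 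Finally the remainders are controlled exactly as in \eqref{b08}, \eqref{c04} and \eqref{d04}, giving $\int_{\R_+}|(R_2,R_3,R_4)|\,dx\le C(N(t)+\dels)D(t)^2$ pointwise in $t$; multiplying by $(1+t)^\xi$ and integrating produces the last term $C(N(t)+\dels)\int_0^t(1+\tau)^\xi D(\tau)^2\,d\tau$. Summing the three weighted identities and taking $\ep$ small then gives \eqref{h02}.

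The step I expect to be the main obstacle is the same one hidden in the non-degenerate argument: the $\vp_x$-density carries the indefinite cross term $\rho^2\vp_x\psi$, so the time-boundary term controls $c(1+t)^\xi\lt{\vp_x(t)}^2$ only up to an error $-C(1+t)^\xi\lt{\psi(t)}^2$ (and similarly at $\tau=0$). I would resolve this exactly as the substitution of \eqref{a01} in the proof of \eqref{b01}, namely by adding a large constant multiple of the weighted $L^2$ estimate \eqref{h01} (taken with the same $\xi$): its left-hand side supplies the positive $c(1+t)^\xi\lt{\vvp(t)}^2$ that absorbs the bad $\psi^2$ error, while its right-hand side contributes only terms already admissible in \eqref{h02} — the data term $C\lt{\vvp_0}^2\le C\ho{\vvp_0}^2$, the remainder $C\xi\int_0^t(1+\tau)^{\xi-1}\lt{\vvp}^2\,d\tau\le C\xi\int_0^t(1+\tau)^{\xi-1}\ho{\vvp}^2\,d\tau$, and the term $C\dels\int_0^t(1+\tau)^\xi\lt{\vp_x}^2\,d\tau$, which is absorbed either into the left-hand dissipation (for $\dels$ small) or into $C(N(t)+\dels)\int_0^t(1+\tau)^\xi D(\tau)^2\,d\tau$. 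The only bookkeeping point to verify carefully is that the Leibniz remainder genuinely stays at the level $\ho{\vvp}^2$ rather than $D^2$ — this is what makes the second term of \eqref{h02} usable in the subsequent induction on $\xi$ in Proposition~\ref{pro-3} — and it does, since each density is quadratic in $(\vvp,\vvp_x)$ with bounded coefficients.
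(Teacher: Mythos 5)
Your proposal is correct and follows exactly the route the paper intends: the paper omits this proof, stating only that it is ``almost same as that of (\ref{dd01})'', and your argument is precisely that computation carried out with the weight $(1+t)^\xi$ -- including the key point that the Leibniz remainder stays at the level $\xi(1+\tau)^{\xi-1}\ho{\vvp}^2$ and that the indefinite cross term $\rho^2\vp_x\psi$ is absorbed by invoking the weighted $L^2$ estimate (\ref{h01}) in the role that (\ref{a01}) played in the proof of (\ref{b01}). Nothing is missing.
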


We conclude this section by giving the proofs of Proposition \ref{pro-3}
and Theorem \ref{th-cv} - (i).

\begin{proof}%
[\underline{Proofs of Proposition {\rm \ref{pro-3}} 
and Theorem {\rm \ref{th-cv} - (i)}}]
Summing up the inequalities (\ref{h01}) and (\ref{h02}),
we have the time weighted $H^1$ estimate
\begin{align*}
&
(1+t)^\xi \ho{\vvp(t)}^2
+ \int_0^t (1+\tau)^\xi D(\tau)^2 \, d \tau
\nonumber
\\
& \mspace{20mu}
\le
C \ho{\vvp_0}^2
+ C \xi \int_0^t (1+t)^{\xi-1} \ho{\vvp(\tau)}^2 \, d \tau.
\end{align*}
Add (\ref{g01}) to the above inequality to obtain
\begin{align*}
&
(1+t)^\xi E_\beta(t)^2
+ \int_0^t (1+\tau)^\xi 
 \bigl( \beta \lta{\beta-1}{\vvp(\tau)}^2 + D_\beta(\tau)^2 \bigr) \, d \tau
\nonumber
\\
& \mspace{20mu}
\le C E_\beta(0)^2
+ C \xi \int_0^t (1+\tau)^{\xi-1} \bigl(
\lta{\beta}{\vvp(\tau)}^2
+ D_\beta(\tau)^2
\bigr) \, d \tau,
\end{align*}
where we have used the inequalities
\begin{equation*}
\beta \lta{\beta-1}{\vvp(t)}^2
+ D_\beta(t)^2
\le
2 D_\beta(t)^2,
\quad
\ho{\vvp(t)}^2 + \lta{\beta}{\vvp(t)}^2
\le
2 \lta{\beta}{\vvp(t)}^2
+ D_\beta(t)^2.
\end{equation*}
By applying induction with respect to $\beta$ and $\xi$,
studied by \cite{km-85} and \cite{ns-98},
we have the desired estimates (\ref{apri-3a}) and (\ref{apri-3b}).
The convergence (\ref{conv-sp})  immediately follows from 
 (\ref{apri-3b}) and the Sobolev inequality.
Consequently, we complete the proofs of Proposition \ref{pro-3}
and Theorem \ref{th-cv} - (i).
\end{proof}

\subsection{Estimates for transonic flow}
\label{w-est-d}

In this section, we show the convergence (\ref{conv-tr})
for the case $M_+=1$ by deriving the time and space weighted
estimate in $H^1$.
To do this, we define weighted norms by
\begin{gather*}
\tilde{N}_\alpha (t)
:=
\sup_{0 \le \tau \le t} 
\tilde{E}_\alpha(\tau),
\quad
\tilde{E}_\alpha(t)
:=
\hoat{\alpha}{\vvp(t)},
\\
\tilde{D}_\alpha (t)^2
:=
|(\vp,\vp_x)(t,0)|^2
+ \dels^2 \ltat{\alpha-2}{\vvp(t)}^2
+ \ltat{\alpha}{\vp_x(t)}^2
+ \hoat{\alpha}{(\psi_x,\chi_x)(t)}^2,
\end{gather*}
%where the norm $\hsat{s}{\alpha}{\cdot}$ is defined by
where $\hsat{s}{\alpha}{\cdot}$ is the $s$-th order Sobolev norm
corresponding to $\ltat{\alpha}{\cdot}$:
\[
\hsat{s}{\alpha}{u}
:=
\Bigl(
\sum_{k=0}^s
\ltat{\alpha}{\pd_x^k u}^2
\Bigr)^{1/2}
=
\Bigl(
\sum_{k=0}^s
\int_{\R_+} (1+\dels x)^\alpha |\pd_x^k u(x)|^2 \, dx
\Bigr)^{1/2}.
\]

\begin{proposition}
\label{pro-4}
We assume that $M_+=1$ and \eqref{bc-mnz} hold.
Let $\vvp \in X(0,T)$ be a solution to \eqref{pt-eq}, \eqref{pt-ic}
and \eqref{pt-bc} satisfying $\vvp \in C ([0,T] ; H^1_\alpha(\R_+))$
for certain constants $\alpha \in [1,2(1+\sqrt{2}))$ and $T > 0$.
Then there exist positive constants $\ep_4$ and $C$ independent of $T$
such that if $\dels^{-1/2} \tilde{N}_\alpha(T) + \dels \le \ep_4$,
then the solution $\vvp$ satisfies the following estimates
for $t \in [0,T]$:
\begin{equation}
(1+t)^j \tilde{E}_{\alpha - 2j}(t)^2
+ \int_0^t (1+\tau)^j \tilde{D}_{\alpha-2j}(\tau)^2 \, d \tau
\le
C \dels^{-2j} 
\tilde{E}_\alpha(0)^2
%\hoa{\alpha}{\vvp_0}^2
\label{apri-4a}
\end{equation}
for an arbitrary integer $j=0,\dots,[\alpha/2]$ and
\begin{equation}
(1+t)^\xi 
\tilde{E}_0(t)^2
%\ho{\vvp(t)}^2
+ \int_0^t (1+\tau)^{\xi} \tilde{D}_0(\tau)^2 \, d \tau
\le
C \dels^{-\alpha}
%\ho{\vvp_0}^2 
\tilde{E}_\alpha(0)^2
(1+t)^{\xi-\alpha/2}
\label{apri-4b}
\end{equation}
for an arbitrary $\xi > \alpha/2$.
\end{proposition}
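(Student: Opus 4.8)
The plan is to prove \eqref{apri-4a} and \eqref{apri-4b} by the same time--and--space weighted energy scheme that established Proposition \ref{pro-3} in Section \ref{w-est-nd}, with two systematic changes dictated by the degeneracy at $M_+=1$: the spatial weight $(1+x)^\beta$ is replaced by $(1+\dels x)^\beta$ --- the natural scale of the degenerate stationary solution, which by \eqref{zt-1} varies on the length $x\sim\dels^{-1}$ --- and the role of the non-degenerate dissipation is taken over by the degenerate dissipation $\dels^2\ltat{\beta-2}{\vvp}^2$ produced in Lemma \ref{lm-e}. First I would multiply the energy identity \eqref{a02} by $w(t,x):=(1+t)^\xi(1+\dels x)^\beta$, $\beta\in[0,\alpha]$, and integrate over $(0,t)\times\R_+$ as in \eqref{g02}. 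The boundary term again contributes $c(1+t)^\xi\vp(\tau,0)^2$; the principal part $\ut_x G_1^{(2)}+\ttt_x G_1^{(3)}$ is handled by the decomposition \eqref{e02}--\eqref{e03}, which upon inserting $\zt^2\ge c\dels^2(1+\dels x)^{-2}$ from \eqref{zt-1} yields the degenerate dissipation $c\dels^2\ltat{\beta-2}{\vvp}^2$, while the residue $R_1$ is absorbed via \eqref{e04}. This produces the weighted $L^2$ analogue of Lemma \ref{lm-g}, carrying the extra forcing $\xi(1+\tau)^{\xi-1}\ltat{\beta}{\vvp}^2$ from $w_t$.

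Next I would derive the companion weighted $H^1$ estimate for $\vvp_x$, following Lemma \ref{lm-f} and Lemma \ref{lm-h} but using \eqref{dst-3} to bound the derivatives of the degenerate stationary solution; the smallness is imposed in the rescaled form $\dels^{-1/2}\tilde N_\alpha(T)+\dels\le\ep_4$, under which the nonlinear and residual terms are controlled by $\tilde D_\beta$. Summing the weighted $L^2$ and $H^1$ estimates closes an inequality for $\tilde E_\beta(t)^2+\int_0^t\tilde D_\beta(\tau)^2\,d\tau$ with forcing $C\xi\int_0^t(1+\tau)^{\xi-1}(\ltat{\beta}{\vvp}^2+\tilde D_\beta^2)\,d\tau$. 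I would then run the double induction on the spatial weight $\beta=\alpha-2j$ (lowered in steps of two) and the temporal power, exactly as in \cite{km-85,ns-98} and the proof of Proposition \ref{pro-3}. The mechanism is that the time-derivative term $j(1+\tau)^{j-1}\ltat{\alpha-2j}{\vvp}^2$ at level $j$ equals $\dels^{-2}$ times the degenerate dissipation $\dels^2\ltat{\alpha-2j}{\vvp}^2$ generated at level $j-1$; hence each inductive step costs one factor $\dels^{-2}$, giving the constant $C\dels^{-2j}$ in \eqref{apri-4a}. Once $\beta=0$ the spatial weight can no longer be traded, so raising $\xi>\alpha/2$ produces the growth $(1+t)^{\xi-\alpha/2}$ of \eqref{apri-4b} (which in turn yields the rate $(1+t)^{-\alpha/4}$ of \eqref{conv-tr} through $\li{\vvp}\le C\lt{\vvp}^{1/2}\lt{\vvp_x}^{1/2}$).

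The main obstacle will be the treatment of the cross terms generated by the spatial weight derivative $w_x=\beta\dels(1+t)^\xi(1+\dels x)^{\beta-1}$, namely $\int_{\R_+} w_x G_1^{(1)}\,dx$ and $\int_{\R_+} w_x B_1\,dx$. Unlike the supersonic case, these cannot be turned into a positive weight-gradient dissipation: at $M_+=1$ the quadratic form $F_2$ of \eqref{g10} degenerates, since its $\psi^2$-coefficient $\gamma(\gamma-1)(M_+^2-1)$ in \eqref{g05} vanishes, so $G_1^{(1)}$ is only positive semidefinite and the estimate \eqref{g06} is unavailable. Consequently these cross terms, together with the term obtained by integrating $\int_{\R_+} w_x B_1$ by parts once more (which carries the weight-second-derivative coefficient $\beta(\beta-1)\dels^2(1+\dels x)^{\beta-2}$), must be balanced entirely against the degenerate dissipation $c\dels^2\ltat{\beta-2}{\vvp}^2$ and the parabolic dissipation $\hoat{\beta}{(\psi_x,\chi_x)}^2$. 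Since these coefficients grow like $\beta$ and $\beta(\beta-1)$ while the dissipation constant $c$ is fixed, requiring the resulting weighted quadratic form to stay positive definite for all $\beta\le\alpha$ reduces, after completing the square, to a condition equivalent to $\alpha^2-4\alpha-4<0$, i.e.\ $\alpha<2(1+\sqrt2)$; this is precisely the origin of the restriction on the weight exponent, the degenerate-wave analogue of the Hardy-type balance of \cite{kur08}. With this balance in hand the remaining steps are routine and parallel to Section \ref{w-est-nd}.
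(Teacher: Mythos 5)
Your global architecture coincides with the paper's (Lemmas \ref{lm-i}--\ref{lm-k}): a space--time weighted $L^2$ estimate, a companion weighted estimate for $\vvp_x$, and a double induction in which the degenerate dissipation $\dels^2\ltat{\beta-2}{\vvp}^2$ produced at weight level $\beta$ pays for the time-derivative forcing at level $\beta-2$, each step in $j$ costing a factor $\dels^{-2}$; that bookkeeping, and the passage from \eqref{apri-4a} to \eqref{apri-4b}, are correct. The genuine gap is in the step you yourself single out as the main obstacle. You propose to treat $\int_{\R_+}w_xG_1^{(1)}\,dx$ and $\int_{\R_+}w_xB_1\,dx$ as errors to be \emph{absorbed} by the degenerate and parabolic dissipation, "since $G_1^{(1)}$ is only positive semidefinite." But the sharp range $\beta<2(1+\sqrt2)$ is produced precisely by keeping the \emph{signs} of these terms, not their absolute values. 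In the paper's Lemma \ref{lm-j}, the decomposition \eqref{j05} gives $w_xG_1^{(1)}=\frac{\cv}{2}w_xF_2+\frac{\beta}{\gamma}\zt^{-\beta+2}F_4+\dots$, where the semidefinite part $F_2$ supplies \emph{extra} dissipation $c\beta\zt^{-\beta+1}|(\psih,\chih)|^2$ (with the stronger weight $\zt^{-\beta+1}$) transverse to its null direction $q_1=\bar q\,(1,1,\gamma-1)^\trp$, while along that null direction $F_4|_{q_1}=\gamma(\gamma+1)\bar q^2>0$: the weight-derivative term \emph{helps} there, with coefficient proportional to $+\beta$. The only harmful contribution is the loss $-\frac{\beta^2}{h^2}(\mu\psi^2+\kappa\chi^2)\zt^{-\beta+2}$ from completing the square between $w_xB_1$ and the viscous/heat dissipation, and the resulting balance along $q_1$ is \eqref{j12}: $\hat a_{11}=\frac{\gamma+1}{4}\bar q^2\,(4+4\beta-\beta^2)$, whose positivity is exactly $\beta^2-4\beta-4<0$. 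The $+4\beta$ term is indispensable. If $w_xG_1^{(1)}$ is dominated in absolute value and discarded, the balance degenerates to $4-\beta^2>0$ at best, i.e.\ $\beta<2$; and even the optimistic version of your computation, which keeps the helpful term but replaces the square-completion loss $\frac{\beta^2}{4}$ by your integration-by-parts coefficient $\frac{1}{2}\beta(\beta-1)$, gives $1+\beta-\frac{1}{2}\beta(\beta-1)>0$, i.e.\ $\beta<(3+\sqrt{17})/2\approx3.56$. Either way the proposition remains unproved on the upper part of the stated range $[1,2(1+\sqrt2))$, and your heuristic "fixed dissipation constant versus coefficients growing in $\beta$" cannot, by itself, reduce to the quadratic $\alpha^2-4\alpha-4<0$: that quadratic records a signed cancellation, not an absorption.

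Two further ingredients are needed to make the scheme close. First, the exact numerology above forces the spatial weight to be built from the degenerate profile itself: the paper takes $w=h^2\zt^{-\beta}$ with $h^2=4d/(\gamma+1)$, so that \eqref{eq-z2} gives $w_x=2\beta\zt^{-\beta+1}+O(\beta\zt^{-\beta+2})$, i.e.\ $w_x$ is proportional to $\zt w$ with a fixed universal constant. With your choice $(1+\dels x)^\beta$, the ratio of $\dels(1+\dels x)^{-1}$ to $\zt$ is only pinned down up to constants by \eqref{zt-1} (it drifts toward $(\gamma+1)/(2d)$ as $x\to\infty$), so the cancellation yielding $4+4\beta-\beta^2$ is spoiled and the admissible range of $\beta$ shrinks by a factor depending on $\mu,\kappa,\gamma$; the weight $(1+\dels x)^\beta$ is adequate for the derivative estimate (as in Lemma \ref{lm-k}, where no sharp constants are needed) but not for the $L^2$ level. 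Second, the remainder of the weighted identity contains the cubic term $\zt^{-\beta+1}|\vvp|^3$ (see \eqref{j04}), which Cauchy--Schwarz against the available dissipation cannot handle; the paper controls it with the Hardy-type interpolation inequality \eqref{i01} of Lemma \ref{lm-i}, and it is exactly this step that forces $\beta\ge1$ and the rescaled smallness $\dels^{-1/2}\tilde N_\alpha(T)+\dels\le\ep_4$. Your proposal invokes the rescaled smallness but never identifies this inequality, without which the nonlinear terms in the weighted $L^2$ estimate cannot be closed.
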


In order to prove Proposition \ref{pro-4},
we have to derive time and space weighted estimates
not only for $\vvp$ in $L^2$ but also for the first order 
derivative $\vvp_x$.
In deriving the weighted estimate for $\vvp$ in $L^2$,
we utilize the following interpolation inequality
to handle several nonlinear terms.

\begin{lemma}
\label{lm-i}
Let $\beta \ge 1$.
Then a function 
$f \in H^1_\beta (\R_+)$ satisfies
\begin{equation}
\int_{\R_+} (1+\dels x)^{\beta-1} |f(x)|^3 \, dx
\le
C \dels^{-3/2}
\ltat{1}{f} \bigl(
f(0)^2 
+  \dels^2 \ltat{\beta-2}{f}^2
+ \ltat{\beta}{f_x}^2
\bigr).
\label{i01}
\end{equation}
\end{lemma}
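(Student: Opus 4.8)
The plan is to prove \eqref{i01} by a weighted Cauchy--Schwarz splitting that isolates the factor $\ltat{1}{f}$, followed by a weighted pointwise estimate for the remaining quadratic part. First I would write
\[
\int_{\R_+} (1+\dels x)^{\beta-1} |f|^3 \, dx
=
\int_{\R_+} (1+\dels x)^{1/2} |f| \cdot (1+\dels x)^{\beta-3/2} |f|^2 \, dx
\]
and apply the Cauchy--Schwarz inequality to bound the right-hand side by
\[
\ltat{1}{f}
\Bigl(
\int_{\R_+} (1+\dels x)^{2\beta-3} |f|^4 \, dx
\Bigr)^{1/2}.
\]
Thus it remains to show that the weighted $L^4$ quantity is bounded by $C \dels^{-3} B^2$, where $B := f(0)^2 + \dels^2 \ltat{\beta-2}{f}^2 + \ltat{\beta}{f_x}^2$ denotes the bracket on the right-hand side of \eqref{i01}.

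Second I would estimate this $L^4$ quantity by pulling out a weighted $L^\infty$ factor,
\[
\int_{\R_+} (1+\dels x)^{2\beta-3} |f|^4 \, dx
\le
\Bigl(
\sup_{x \in \R_+} (1+\dels x)^{\beta-1} |f(x)|^2
\Bigr)
\ltat{\beta-2}{f}^2,
\]
where the exponents are chosen so that $(\beta-1)+(\beta-2) = 2\beta-3$. Since $\dels^2 \ltat{\beta-2}{f}^2 \le B$ gives $\ltat{\beta-2}{f}^2 \le \dels^{-2} B$, the task reduces to proving the weighted pointwise bound
\[
\sup_{x \in \R_+} (1+\dels x)^{\beta-1} |f(x)|^2
\le
C \dels^{-1} B,
\]
after which the two displayed inequalities combine to produce $C \dels^{-3} B^2$ for the $L^4$ quantity and hence the claim.

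Finally I would prove this pointwise bound by applying the one-dimensional Sobolev estimate to $g(x) := (1+\dels x)^{(\beta-1)/2} f(x)$. Since $\beta \ge 1$ ensures $g \in H^1(\R_+)$, the identity $g(x)^2 = g(0)^2 + 2\int_0^x g\, g_x \, dy$ together with $g(0) = f(0)$ yields
\[
\sup_{x \in \R_+} g(x)^2
\le
f(0)^2 + 2 \int_{\R_+} |g| \, |g_x| \, dx.
\]
Differentiating gives $g_x = \tfrac{\beta-1}{2}\dels (1+\dels x)^{(\beta-3)/2} f + (1+\dels x)^{(\beta-1)/2} f_x$, so after Cauchy--Schwarz the integral is controlled by $C \dels \ltat{\beta-2}{f}^2 + C \ltat{\beta-2}{f} \, \ltat{\beta}{f_x}$. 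Each term is then bounded by $C\dels^{-1} B$: the first because $\dels^2 \ltat{\beta-2}{f}^2 \le B$ and $\dels \le 1$, and the second by Young's inequality $2\ltat{\beta-2}{f}\ltat{\beta}{f_x} \le \dels \ltat{\beta-2}{f}^2 + \dels^{-1}\ltat{\beta}{f_x}^2$ combined with $\ltat{\beta}{f_x}^2 \le B$.

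The main obstacle is the bookkeeping of the powers of $\dels$: the factor $\dels^{-3/2}$ in \eqref{i01} is not gained in a single step but assembled from the $\dels^{-2}$ supplied by $\ltat{\beta-2}{f}^2 \le \dels^{-2} B$ and the $\dels^{-1}$ of the pointwise bound, which merge to $\dels^{-3}$ inside the $L^4$ integral and become $\dels^{-3/2}$ after the square root. Keeping these exponents consistent, and checking that the weight exponents $\tfrac12,\ \beta-\tfrac32,\ \beta-1,\ \beta-2$ match at each Cauchy--Schwarz split, is where the care lies; the analytic input is only the elementary Sobolev and Young inequalities.
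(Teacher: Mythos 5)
Your proof is correct. Every step checks out: the exponent bookkeeping in the initial Cauchy--Schwarz split ($\tfrac12+\beta-\tfrac32=\beta-1$, yielding exactly $\ltat{1}{f}$ and the weight $2\beta-3$ on $|f|^4$), the factorization $(\beta-1)+(\beta-2)=2\beta-3$ behind the sup-splitting, and the weighted pointwise bound $\sup_x (1+\dels x)^{\beta-1}|f(x)|^2 \le C\dels^{-1}\bigl(f(0)^2+\dels^2\ltat{\beta-2}{f}^2+\ltat{\beta}{f_x}^2\bigr)$ obtained from $g=(1+\dels x)^{(\beta-1)/2}f$, the identity $g(x)^2=g(0)^2+2\int_0^x g\,g_x\,dy$, Cauchy--Schwarz with the split $(1+\dels x)^{\beta-1}=(1+\dels x)^{(\beta-2)/2}(1+\dels x)^{\beta/2}$, and Young's inequality with parameter $\dels$. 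The only implicit hypothesis is $\dels\le 1$ (used, e.g., in $f(0)^2\le\dels^{-1}B$), which is harmless since $\dels<\ep_0$ throughout the paper. Note that the paper itself gives no proof of this lemma: it defers entirely to Lemma 5.1 (with $p=2$, $\alpha=\beta$) of the cited work of Ueda, Nakamura and Kawashima, so your argument supplies a complete, self-contained, elementary derivation of what the paper leaves as a citation; its strategy --- interpolating the cubic term through a weighted $L^4$ quantity and a one-dimensional Sobolev-type sup bound, with the $\dels^{-3/2}$ assembled as $\dels^{-1}\cdot\dels^{-2}$ under a square root --- is the natural one for such weighted Gagliardo--Nirenberg-type inequalities and is in the same spirit as the cited reference.
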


Since we can prove (\ref{i01}) in the similar way to
the paper \cite{unk08},
we omit the proof of Lemma \ref{lm-i}.
For details, see Lemma 5.1 with $p=2$ and $\alpha=\beta$ in \cite{unk08}.

Then
we  show the time and space weighted $L^2$ estimate.
In deriving this estimate,
we have to assume that the weight exponent $\alpha$ is less than
$2(1+\sqrt{2})$ in order to obtain the dissipative term 
$\dels^2 \ltat{\beta-2}{\vvp}^2$.
Moreover, to control the term $\zt^{\beta+1} |\vvp|^3$ in (\ref{j04}),
we have to assume the smallness of $\ltat{1}{\vvp}$.
Hence we need a condition $\alpha \ge 1$, too.

\begin{lemma}
\label{lm-j}
Suppose that the same conditions as in Proposition {\rm \ref{pro-4}}
hold.
Then we have
\begin{align}
&
(1+t)^\xi \ltat{\beta}{\vvp(t)}^2
+ \int_0^t (1+\tau)^\xi \bigl(
\vp(\tau,0)^2
+ \dels^2 \ltat{\beta-2}{\vvp(\tau)}^2
+ \ltat{\beta}{(\psi_x,\chi_x)(\tau)}^2
\bigr) \, d \tau
\nonumber
\\
& \mspace{5mu}
\le
C \ltat{\beta}{\vvp_0}^2
+ C \xi \int_0^t (1+\tau)^{\xi-1} \ltat{\beta}{\vvp(\tau)}^2 \, d \tau
%+ C ( \frac{\tilde{N}_\beta(t)}{\sqrt{\dels}}  + \dels)
+ C ( \dels^{-1/2} \tilde{N}_\beta(t)  + \dels)
\int_0^t (1+\tau)^\xi \tilde{D}_\beta(\tau)^2 \, d \tau
\label{j01}
\end{align}
for arbitrary constants $\beta \in [1,\alpha]$ and $\xi \ge 0$.
\end{lemma}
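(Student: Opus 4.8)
The plan is to upgrade the weighted $L^2$ estimate of Lemma~\ref{lm-g} to the degenerate flow by combining its weighting scheme with the source-term decomposition already obtained for the unweighted transonic estimate in Lemma~\ref{lm-e}. First I would multiply the energy identity \eqref{a02} by the weight $w(t,x):=(1+t)^\xi(1+\dels x)^\beta$ and integrate over $(0,t)\times\R_+$, producing a weighted identity of the same shape as \eqref{g02}. As before, the spatial-divergence term $-\int_{\R_+}\{w(G_1^{(1)}+B_1)\}_x\,dx$ gives the boundary dissipation $c(1+\tau)^\xi\vp(\tau,0)^2$, the parabolic terms on the left give $c(1+\tau)^\xi\ltat{\beta}{(\psi_x,\chi_x)}^2$, and the time derivative $w_t=\xi(1+t)^{\xi-1}(1+\dels x)^\beta$ produces precisely the term $C\xi\int_0^t(1+\tau)^{\xi-1}\ltat{\beta}{\vvp}^2\,d\tau$ on the right. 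The one structural change is that for $M_+=1$ the quadratic form $F_2$ in $G_1^{(1)}$ is only positive \emph{semi}-definite, its $\psi$-direction degenerating in \eqref{g05}; hence $w_xG_1^{(1)}$ no longer supplies the leading dissipation. Instead the dissipation is extracted exactly as in Lemma~\ref{lm-e}: by the decomposition \eqref{e02}, the positive definiteness \eqref{e03} of $F_1$, and the lower bound $\zt\ge c\dels/(1+\dels x)$ from \eqref{zt-1}, the term $w(\ut_xG_1^{(2)}+\ttt_xG_1^{(3)})$ yields the degenerate dissipation $c\dels^2\ltat{\beta-2}{\vvp}^2$ with a fixed coefficient.

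The decisive step is the weight-derivative term $-w_xB_1$, with $B_1=\mu\psi\psi_x+(\kappa/\theta)\chi\chi_x$. Since $w_xG_1^{(1)}\ge0$ to leading order and may simply be discarded (it lives at the separate order $\dels\,\ltat{\beta-1}{\,\cdot\,}^2$), the term $-w_xB_1$ is the only one that can erode the dissipation. I would integrate it by parts in $x$; using $\psi(t,0)=\chi(t,0)=0$ the boundary contribution vanishes and one is left with
\[
\tfrac12\int_{\R_+}w_{xx}\Bigl(\mu\psi^2+\tfrac{\kappa}{\theta}\chi^2\Bigr)\,dx
\]
together with lower-order (cubic, or $\ttt_x$-weighted) remainders. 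Because $w_{xx}=\beta(\beta-1)\dels^2(1+t)^\xi(1+\dels x)^{\beta-2}$, this is a term of \emph{exactly} the order $\dels^2\ltat{\beta-2}{\,\cdot\,}^2$ of the main dissipation but of the opposite sign and with coefficient proportional to $\beta(\beta-1)$. The net $\dels^2(1+\dels x)^{\beta-2}$ contribution is thus a quadratic form obtained by subtracting $\tfrac{\beta(\beta-1)}{2}(\mu\psi^2+\kappa\chi^2)$ from a fixed positive multiple of $F_1(\vp,\psi,\chi)$, and demanding that this remain positive definite is exactly what forces $\beta<2(1+\sqrt2)$, hence $\alpha<2(1+\sqrt2)$. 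I expect this balancing of the weight derivative against the degenerate dissipation to be the main obstacle, since it is the origin of the sharp weight restriction imposed in Proposition~\ref{pro-4}.

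It then remains to absorb the nonlinear remainders into the right-hand side. The quadratic residues $O(N(t)+\dels)\zt^2|\vvp|^2$ and $O(\dels)e^{-cx}|\vvp|^2$ arising from \eqref{e02}, \eqref{e06} and from $R_1$ as in \eqref{e04} are bounded, after weighting, by $C(N(t)+\dels)\dels^2(1+t)^\xi\ltat{\beta-2}{\vvp}^2$ and, via the Poincar\'e-type inequality \eqref{poin}, by $C\dels(1+t)^\xi\tilde D_\beta(t)^2$, all absorbable once $N(t)+\dels$ is small. The genuinely cubic terms, which after $\zt\le C\dels/(1+\dels x)$ take the form $\dels\int_{\R_+}(1+\dels x)^{\beta-1}|\vvp|^3\,dx$, are controlled by the interpolation inequality \eqref{i01} of Lemma~\ref{lm-i}, which converts them into $C\dels^{-1/2}\ltat{1}{\vvp}\,\tilde D_\beta(t)^2\le C\dels^{-1/2}\tilde N_\beta(t)\,\tilde D_\beta(t)^2$; this accounts for the $\dels^{-1/2}\tilde N_\beta(t)$ part of the error coefficient and explains why one must take $\beta\ge1$, so that $\ltat{1}{\vvp}\le\tilde E_\beta\le\tilde N_\beta$. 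Integrating in time, keeping the boundary, parabolic, and net degenerate-dissipation terms on the left while moving the $w_t$-term and the estimated remainders to the right, yields \eqref{j01}.
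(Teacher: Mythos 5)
Your overall architecture matches the paper in its outer layers: multiplying \eqref{a02} by a degenerate spatial weight, extracting the dissipation from $\ut_x G_1^{(2)}+\ttt_x G_1^{(3)}$ via \eqref{e02}--\eqref{e03}, handling the exponentially localized remainders by the Poincar\'e inequality \eqref{poin}, and controlling the cubic terms by Lemma \ref{lm-i} (which is indeed where $\beta\ge 1$ and the factor $\dels^{-1/2}\tilde N_\beta$ come from). But the step you yourself call decisive --- the treatment of the critical quadratic form --- contains a genuine gap, in fact two. First, you discard $w_xG_1^{(1)}$ on the grounds that it is positive to leading order and lives at a separate order. It does not: by \eqref{j05}, $w_xG_1^{(1)}=\frac{\cv}{2}w_xF_2+\frac{\beta}{\gamma}\zt^{-\beta+2}F_4+\dots$, and the $F_4$-part sits exactly at the critical order $\dels^2(1+\dels x)^{\beta-2}$; it is indefinite, so it cannot be dropped by a sign argument, and its \emph{positive} contribution in the degenerate direction is precisely what pushes the admissible range beyond $\beta\approx 2$. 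Moreover the semi-definite piece $\frac{\cv}{2}w_xF_2$, at the stronger order $\zt^{-\beta+1}$, is indispensable: the critical-order form $F_3=\frac{1}{\gamma}F_1+\frac{\beta}{\gamma}F_4-\frac{\beta^2}{h^2}(\mu\psi^2+\kappa\chi^2)$ is \emph{not} positive definite by itself once $\beta$ is moderately large (e.g.\ at $(\vp,\psi,\chi)=(0,\psi,0)$ it equals $\bigl(2-\frac{\beta^2\mu(\gamma+1)}{4d}\bigr)\psi^2$, whose sign depends on $\mu,\kappa,\gamma$), and positivity survives only along the null direction $q_1=(1,1,\gamma-1)^\trp\bar{q}$ of $F_2$; the remaining directions $(\psih,\chih)$ must be controlled by the $F_2$-term, which is exactly the paper's Step 2. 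The universal constant $2(1+\sqrt{2})$ is the condition $\hat{a}_{11}=F_3|_{\vvp=q_1}=\frac{\gamma+1}{4}\bar{q}^2(4+4\beta-\beta^2)>0$ of \eqref{j12}; your proposed balance, a fixed multiple of $F_1$ minus $\frac{\beta(\beta-1)}{2}(\mu\psi^2+\kappa\chi^2)$, has a positivity threshold depending on $\mu,\kappa,\gamma$ and does not produce this constant.

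Second, both your choice of weight and your integration by parts lose the sharp coefficient. The paper takes $w=h^2\zt^{-\beta}$ with $h=(4d/(\gamma+1))^{1/2}$, so that by the ODE \eqref{eq-z2} the exact relation $w_x=2\beta\zt^{-\beta+1}+O(\beta\zt^{-\beta+2})$ of \eqref{j07} holds and all critical terms carry commensurate powers of $\zt$; with your weight $(1+\dels x)^\beta$ the comparison between $w_{xx}$ and the dissipation $\zt^2F_1 w$ goes through \eqref{zt-1}, which pins $\zt(1+\dels x)/\dels$ only between two different constants, so the exact cancellation needed near the threshold is unavailable. More importantly, the paper does \emph{not} integrate $w_xB_1$ by parts: it completes the square against the parabolic dissipation (see \eqref{j11}), producing the deficit $-\frac{\beta^2}{h^2}\zt^{-\beta+2}(\mu\psi^2+\kappa\chi^2)$, and then recovers a small multiple of $\zt^{-\beta}|(\psi_x,\chi_x)|^2$ from the completed squares using the strict margin for $\beta<2(1+\sqrt{2})$. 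Your integration by parts instead yields the deficit $\frac{1}{2}w_{xx}(\mu\psi^2+\cdots)$, which in the paper's normalization is $\frac{2\beta(\beta-1)}{h^2}\zt^{-\beta+2}(\mu\psi^2+\kappa\chi^2)$, strictly larger than $\frac{\beta^2}{h^2}$ whenever $\beta>2$. Repeating the $q_1$-computation with this deficit replaces $4+4\beta-\beta^2>0$ by $1+\beta-\frac{\beta(\beta-1)}{2}>0$, i.e.\ $\beta<(3+\sqrt{17})/2\approx 3.56$, strictly short of $2(1+\sqrt{2})\approx 4.83$. So even after repairing the first gap, your route proves the lemma only on a strictly smaller range of weights than the statement requires.
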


\begin{proof}
In the present proof,
we employ a spatial weight function
\[
w(x)
:=
h^2 \zt(x)^{-\beta},
\quad
h := \Bigl( \frac{4d}{\gamma+1} \Bigr)^{1/2}.
\]
Notice that $w \sim \dels^{-\beta} (1+\dels x)^\beta$ holds
due to (\ref{zt-1}).
Multiplying (\ref{a02}) by the weight function $w(x)$,
we have
\begin{gather}
(\rho w \cale)_t
- \bigl\{
w (G_1^{(1)} + B_1)
\bigr\}_x
+ w_x G_1^{(1)}
+ w_x B_1
+ w \Bigl(
\mu \frac{\ttt}{\theta} \psi_x^2
+ \kappa \frac{\ttt}{\theta^2} \chi_x^2
\Bigr)
\nonumber
\\
=
w (\ut_x G_1^{(2)} + \ttt_x G_1^{(3)})
+ w R_1.
\label{j02}
\end{gather}
The remainder of the present proof is divided into
three steps.

\noindent
\underline{\bf Step 1}.
Firstly we show that the equality (\ref{j02}) is rewritten as
\begin{equation}
(\rho w \cale)_t
- \bigl\{ w (G_1^{(1)} + B_1) \bigr\}_x
+ \tilde{F}
=
\tilde{R},
\label{j03}
\end{equation}
where $\tilde{F}$ is defined by
\begin{gather}
\tilde{F}
:=
\frac{\cv}{2} w_x F_2
+ \zt^{-\beta+2} F_3
+ \mu \Bigl( 
h \psi_x + \frac{\beta}{h} \zt \psi
\Bigr)^2
\zt^{-\beta}
+ \kappa \Bigl(
h \chi_x + \frac{\beta}{h} \zt \chi
\Bigr)^2
\zt^{-\beta},
\nonumber
\\
F_3 = F_3(\vp,\psi,\chi)
:=
\frac{1}{\gamma} F_1(\vp,\psi,\chi)
+ \frac{\beta}{\gamma} F_4(\vp,\psi,\chi)
- \frac{\beta^2}{h^2}
( \mu \psi^2 + \kappa \chi^2),
\nonumber
\\
F_4 = F_4(\vp,\psi,\chi)
:=
(3 - \gamma) \vp^2 + \chi^2 + 2(\gamma-1) \vp \psi
+ 2 \psi \chi
\nonumber
\end{gather}
and the remaining term $\tilde{R}$ satisfies
\begin{align}
|\tilde{R} |
\le {} &
C (N_\beta(t) + \dels) \bigl(
\zt^{-\beta +2} |\vvp|^2
+ \zt^{-\beta} |(\psi_x,\chi_x)|^2
\bigr)
+ C \dels e^{-cx} \zt^{-\beta} |\vvp|^2
\nonumber
\\
& \mspace{20mu}
+ C \zt^{-\beta+1} |\vvp|^3.
\label{j04}
\end{align}
For this purpose, 
we  show that the third term on the left-hand side
of (\ref{j02}) verifies a decomposition
\begin{equation}
w_x G_1^{(1)}
=
\frac{\cv}{2} w_x F_2
+ \frac{1}{\gamma} \beta \zt^{-\beta+2} F_4
+ O(|\vvp| + \zt^2 + \dels e^{-cx}) \zt^{-\beta+1} |\vvp|^2,
\label{j05}
\end{equation}
where $F_2$ is defined in (\ref{g10}).
Using the fact that
\begin{equation}
(\rho, u, \theta)
=
(1,-1,1)
+ (-1, -1, 1-\gamma) \zt
+ O(|\vvp| + \zt^2 + \dels e^{-cx}),
\label{j08}
\end{equation}
which follows from (\ref{dst-1}),
we see that the terms in $G_1^{(1)}$ satisfy
\begin{gather*}
-\rho u \cale
= 
\frac{1}{2 \gamma} \vp^2
+ \frac{1}{2} \psi^2
+ \frac{\cv}{2} \chi^2
+ \Bigl(
\frac{3-\gamma}{2 \gamma} \vp^2
+ \frac{1}{2 \gamma} \chi^2
\Bigr) \zt
+ O(|\vvp| + \zt^2 + \dels e^{-cx}) |\vvp|^2,
\\
- (p - \pt) \psi
 =
-\frac{1}{\gamma} \vp \psi
- \frac{1}{\gamma} \psi \chi
+ \Bigl(
\frac{\gamma-1}{\gamma} \vp \psi
+ \frac{1}{\gamma} \psi \chi
\Bigr) \zt
+ O(|\vvp| + \zt^2 + \dels e^{-cx}) |\vvp|^2.
\end{gather*}
Summing up the above two equalities,
we see that $G_1^{(1)}$ satisfies
\begin{equation}
G_1^{(1)}
=
\frac{\cv}{2} F_2
+ \frac{1}{2 \gamma} \zt F_4
+ O(|\vvp| + \zt^2 + \dels e^{-cx}) |\vvp|^2.
\label{j06}
\end{equation}
Furthermore, by differentiating the weight function $w(x)$
and using (\ref{eq-z2}), 
we have
\begin{equation}
w_x
=
2 \beta \zt^{-\beta+1}
+ O(\beta \zt^{-\beta+2}).
\label{j07}
\end{equation}
Multiplying (\ref{j06}) by (\ref{j07}) yields the
desired equality (\ref{j05}).
We also see that the fourth and the fifth terms on the
left-hand side of (\ref{j02}) are rewritten as 
\begin{align}
&
w_x B_1
+ w \Bigl(
\mu \frac{\ttt}{\theta} \psi_x^2
+ \kappa \frac{\ttt}{\theta^2} \chi_x^2
\Bigr)
\nonumber
\\
& \mspace{10mu}
=
\mu \Bigl(
h \psi_x + \frac{\beta}{h} \zt \psi
\Bigr)^2 \zt^{-\beta}
+ \kappa \Bigl(
h \chi_x + \frac{\beta}{h} \zt \chi
\Bigr)^2 \zt^{-\beta}
- \frac{\beta^2}{h^2}
(
\mu \psi^2
+ \kappa \chi^2
) \zt^{-\beta+2}
\nonumber
\\
& \mspace{30mu}
+ O (|\vvp| + \zt) 
\bigl( \zt^{-\beta+2} |\vvp|^2 
+ \zt^{-\beta} |(\psi_x,\chi_x)|^2
\bigr),
\label{j11}
\end{align}
which follows from (\ref{dst-1}), (\ref{j08}) and (\ref{j07}).
%\begin{align*}
%&
%w_x \mu \psi \psi_x 
%+ w \mu \frac{\ttt}{\theta} \psi_x^2
%\\
%& \mspace{10mu}
%=
%2 \mu \beta \zt^{-\beta+1} \psi \psi_x
%+ \mu h^2 \zt^{-\beta} \psi_x^2
%+ O(\zt^{-\beta+2}) |\psi \psi_x|
%+ O(|\vvp| + \zt) \zt^{-\beta} \psi_x^2
%\\
%& \mspace{10mu}
%=
%\mu \Bigl(
%h \psi_x + \frac{\beta}{h} \zt \psi
%\Bigr)^2 \zt^{-\beta}
%- \frac{\mu}{h^2} \beta^2 \zt^{-\beta+2} \psi^2
%+ O(|\vvp| + \zt) (\zt^{-\beta+2} \psi^2 + \zt^{-\beta} \psi_x^2).
%\end{align*}
To estimate the right-hand side of (\ref{j02}),
we use (\ref{e02}) and (\ref{e04}) 
to obtain
\begin{gather}
\begin{aligned}
w (\ut_x G_1^{(2)} + \ttt_x G_1^{(3)})
= &
- \frac{1}{\gamma} \zt^{-\beta+2} F_1(\vp,\psi,\chi)
+ O(N(t) + \dels) \zt^{-\beta+2} |\vvp|^2
\\
&
+ O(\dels) e^{-cx} \zt^{-\beta} |\vvp|^2,
\end{aligned}
\label{j09}
\\
|w R_1|
\le
C \dels
\bigl(
\zt^{-\beta+2} |\vvp|^2
+ \zt^{-\beta} |(\psi_x,\chi_x)|^2
+ e^{-cx} \zt^{-\beta} |\vvp|^2
\bigr).
\label{j10}
\end{gather}
Therefore, substituting (\ref{j05}), (\ref{j11}), (\ref{j09})
and (\ref{j10}) in (\ref{j02}),
we obtain the desired equality (\ref{j03}).

\smallskip

\noindent
\underline{\bf Step 2}.
Our next aim is to show that $\tilde{F}$ satisfies
the estimate from below as
\begin{equation}
\tilde{F}
\ge
c \zt^{-\beta+2} |\vvp|^2
+ c \zt^{-\beta} |(\psi_x,\chi_x)|^2
\label{j15}
\end{equation}
provided that $\beta \in [0,2(1+\sqrt{2}))$.
Let $A_2$ be a real symmetric matrix satisfying $F_2 = \vvp^\trp A_2 \vvp$,
i.e.,
\begin{equation*}
A_2
:=
\begin{pmatrix}
\gamma - 1 & 1 - \gamma & 0
\\
1 - \gamma & \gamma (\gamma -1) & 1 - \gamma
\\
0 & 1 - \gamma & 1
\end{pmatrix}.
\end{equation*}
We see that the matrix $A_2$ admits three distinct eigenvalues
$0$, $\nu_-$ and $\nu_+$ satisfying
\[
\nu_\pm
=
\frac{1}{2} \bigl(
\gamma^2 
\pm 
\sqrt{\gamma^4 - 4 \gamma^3 + 12 \gamma^2 - 20 \gamma + 12}
\bigr)
\ \; \text{and} \ \;
0 < \nu_- < \nu_+.
\]
Let $q_1$, $q_2$ and $q_3$ be unit eigenvectors of $A_2$ corresponding to
the eigenvalues $0$, $\nu_-$ and $\nu_+$, respectively.
Especially, we obtain
\[
q_1 = (1, 1, \gamma-1)^\trp \bar{q}
\ \; \text{where} \ \;
\bar{q} := (\gamma^2 - 2 \gamma + 3)^{-1/2}.
\]
Furthermore, we employ a new function $\vvph$ defined by
\[
\vvph
:= (\vph, \psih, \chih)^\trp
:= Q^{-1} \vvp,
\]
where $Q := (q_1, q_2, q_3)$ is an orthogonal matrix.
Using the fact that $Q^\trp A_2 Q = Q^{-1} A_2 Q = \diag (0,\nu_1,\nu_2)$,
we see that the quadratic form $F_2$ satisfies the estimate
from below as
\begin{equation*}
F_2
= (Q \vvph)^\trp A_2 Q \vvph
= \nu_- \psih^2 + \nu_+ \chih^2
\ge
c |(\psih, \chih)|^2.
\end{equation*}
Combining this estimate with the inequality $w_x \ge c \beta \zt^{-\beta+1}$, 
which follows from (\ref{j07}) with $\dels \ll 1$,
we have
\begin{equation}
\frac{\cv}{2} w_x F_2
\ge
c \beta \zt^{-\beta+1} |(\psih, \chih)|^2.
\label{j13}
\end{equation}

Next we employ a real symmetric matrix $A_3$ satisfying 
 $F_3 = \vvp^\trp A_3 \vvp$.
%that is, 
%\begin{equation*}
%A_3 :=
%\begin{pmatrix}
%\frac{3-\gamma}{\gamma} \beta + \frac{\gamma-1}{\gamma}
%&
%\frac{\gamma-1}{\gamma} (\beta - 1)
%&
%0
%\\
%\frac{\gamma-1}{\gamma} (\beta - 1)
%&
%- \frac{\mu}{h^2} \beta^2 + 2
%&
%\frac{1}{\gamma} (\beta + 1)
%\\
%0
%&
%\frac{1}{\gamma} (\beta + 1)
%&
%- \frac{\kappa}{h^2} \beta^2 + \frac{1}{\gamma} (\beta+1)
%\end{pmatrix}.
%\end{equation*}
Let $\hat{A}_3 := (\hat{a}_{ij})_{ij} := Q^\trp A_3 Q$.
Then we see that
\begin{equation}
F_3
=
(Q \vvph)^\trp A_3 Q \vvph
=
\vvph^\trp \hat{A}_3 \vvph
=
\hat{a}_{11} \vph^2 
+ O(|(\psih,\chih)|^2 + |\vph (\psih + \chih)|).
\label{j14}
\end{equation}
%Here we can obtain $\hat{a}_{11}$ explicitly as
Since the sign of $\hat{a}_{11}$ will play an important role later,
we obtain it explicitly:
\begin{equation}
\hat{a}_{11}
=
q_1^\trp A_3 q_1
=
F_3|_{\vvp = q_1}
=
\frac{\gamma+1}{4} \bar{q}^2 (4 + 4 \beta - \beta^2).
\label{j12}
\end{equation}
Owing to the above observations,
we show that the first and the second terms in the definition of 
$\tilde{F}$ satisfy
\begin{equation}
\frac{\cv}{2} w_x F_2 
+ \zt^{-\beta+2} F_3
\ge
%c \zt^{-\beta+2} \vph^2
%+ c \zt^{-\beta+1} |(\psih,\chih)|^2
c \zt^{-\beta+2} |\vvp|^2
\label{jj02}
\end{equation}
provided that $\beta \in [0,2(1+\sqrt{2}))$.
Notice that the estimate (\ref{jj02}) immediately yields the
desired estimate (\ref{j15}).
If  $\beta = 0$, 
the quadratic form $F_3$ is positive definite, i.e., $F_3 \ge c |\vvp|^2$
since we have $F_3 = F_1 / \gamma$ and the positivity of $F_1$
due to (\ref{e03}).
Thus, owing to the continuous dependency on $\beta$,
there exists a positive constant $\beta_*$ such that
$F_3 \ge c |\vvp|^2$ holds for $\beta \in [0,\beta_*]$, 
where $c$ is independent of $\beta$.
Namely, (\ref{jj02}) holds for $\beta \in [0,\beta_*]$.

Next, we show (\ref{jj02}) for $\beta \in [\beta_*,2(1+\sqrt{2}))$.
Note that the constant $\hat{a}_{11}$ is
positive due to (\ref{j12}).
Thus, using (\ref{j13}) and (\ref{j14}),
we have 
\begin{align*}
&
\frac{\cv}{2} w_x F_2
+ \zt^{-\beta+2} F_3
\\
& \mspace{10mu}
\ge
c \beta_* \zt^{-\beta+1} |(\psih,\chih)|^2
+
\hat{a}_{11} \zt^{-\beta+2} \vph^2 
- C \zt^{-\beta+2} \bigl(
|(\psih,\chih)|^2 + |\vph (\psih + \chih)|
\bigr)
\\
& \mspace{10mu}
\ge
\bigl( c \beta_* - C \sqrt{\dels} \bigr)
 \zt^{-\beta+1} |(\psih,\chih)|^2
+ \bigl( \hat{a}_{11} - C \sqrt{\dels} \bigr)
 \zt^{-\beta+2} \vph^2,
\end{align*}
which yields (\ref{jj02}) if $\dels$ is sufficiently small.
Therefore, we have shown that the estimate (\ref{jj02}) holds
for $\beta \in [0,2(1+\sqrt{2}))$.

\smallskip

\noindent
\underline{\bf Step 3}.
Finally we prove (\ref{j01}) by using (\ref{j03}) and (\ref{j15}).
Using (\ref{zt-1}), we have the estimate for the integral of
the second term on the left-hand side of (\ref{j03}) as
\begin{equation}
- \int_{\R_+} \bigl\{
w (G_1^{(1)} + B_1)
\bigr\}_x \, d x
\ge
c \dels^{-\beta} \vp(t,0)^2.
\label{j16}
\end{equation}
Furthermore, due to the Poincar\'e type inequality (\ref{poin})
and the inequality (\ref{i01}),
integrating (\ref{j04}) yields
\begin{equation}
\int_{\R_+} |\tilde{R} | \, dx
\le
C \dels^{-\beta} (\dels^{-1/2} \tilde{N}_\beta(t) + \dels)
\tilde{D}_\beta(t)^2,
\label{j17}
\end{equation}
where we have used $\ltat{1}{\vvp(t)} \le \tilde{N}_\beta(t)$ 
for $\beta \ge 1$.
Consequently, integrating (\ref{j03}) and
substituting (\ref{j15}), (\ref{j16}) and (\ref{j17}) in
the resultant equality gives the desired estimate
(\ref{j01}).
Thus we complete the proof.
\end{proof}

Next we show the estimate for the first order derivative $\vvp_x$.
Owing to the degenerate property of the transonic flow,
we have to employ the spatially weighted energy method
for the estimate for $\vvp_x$.

\begin{lemma}
\label{lm-k}
Suppose that the same conditions as in Proposition {\rm \ref{pro-4}}
hold.
Then we have
\begin{align}
&
(1+t)^\xi \ltat{\beta}{\vvp_x(t)}^2
+ \int_0^t (1+\tau)^\xi \bigl(
\vp_x(\tau,0)^2
+ \ltat{\beta}{\vp_x(\tau)}^2
+ \ltat{\beta}{(\psi_{xx},\chi_{xx})(\tau)}^2
\bigr) \, d \tau
\nonumber
\\
& \mspace{20mu}
\le
C \hoat{\beta}{\vvp_0}^2
+ C \xi \int_0^t (1+\tau)^{\xi-1} \hoat{\beta}{\vvp(\tau)}^2 \, d \tau
\nonumber
\\
& \mspace{60mu}
+ C (\dels^{-1/2} \tilde{N}_\beta(t) + \dels)
  \int_0^t (1+\tau)^\xi \tilde{D}_\beta(\tau)^2 \, d \tau
\label{k01}
\end{align}
for arbitrary constants $\beta \in [1,\alpha]$ and $\xi \ge 0$.
\end{lemma}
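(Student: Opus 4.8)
The plan is to combine the spatially weighted energy method of Lemma \ref{lm-j} with the first-order derivative estimates of Lemmas \ref{lm-b}--\ref{lm-d}, exactly as Lemma \ref{lm-f} adapted those estimates to the degenerate case. First I would multiply the three identities (\ref{b05}), (\ref{c02}) and (\ref{d02}), which govern $\vp_x$, $\psi_x$ and $\chi_x$ respectively, by the time and space weight $(1+t)^\xi w(x)$, where $w(x) := h^2 \zt(x)^{-\beta} \sim \dels^{-\beta}(1+\dels x)^\beta$ is the weight employed in Lemma \ref{lm-j}. Unlike the $L^2$ estimate, here the dissipative terms $\ltat{\beta}{\vp_x}^2$ and $\ltat{\beta}{(\psi_{xx},\chi_{xx})}^2$ come directly from the factor $\frac{1}{M_+^2} p\,\vp_x^2$ in (\ref{b05}) and from the parabolic terms $\mu\psi_{xx}^2$, $\kappa\chi_{xx}^2$ in (\ref{c02}) and (\ref{d02}); since $M_+=1$ and $\rho, \theta, p$ are all close to $1$ near the asymptotic state by (\ref{dst-1}), these remain positive after absorbing the cross terms $G_3$, $G_4$ by Young's inequality into $\ep(\psi_{xx}^2+\chi_{xx}^2)+C_\ep|\vvp_x|^2$.

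Next I would control the boundary contributions and the error terms created by the spatial weight. Integrating the flux terms by parts, the outflow condition $\ub<0$ together with $\psi(t,0)=0$ yields the favorable boundary term $-\frac{\mu}{2}w(0)\ub\,\vp_x(t,0)^2 \ge c\,\dels^{-\beta}\vp_x(t,0)^2$, matching the boundary term on the left of (\ref{k01}). The point that makes this estimate easier than the $L^2$ one is that $w$ satisfies $w_x = 2\beta\zt^{-\beta+1}+O(\beta\zt^{-\beta+2})$ by (\ref{j07}), so $w_x \le C\dels\,w$ uniformly; hence every term generated by differentiating the weight (including $-\int_{\R_+} w_x(\tfrac{\mu}{2}u\vp_x^2 - \rho^2\vp_t\psi)\,dx$, in which $\vp_t$ is first eliminated through (\ref{pt-eq1})) carries an extra factor $\dels$ and is absorbed into the $\dels$-small part of the dissipation. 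The differentiated time weight produces the term $C\xi\int_0^t(1+\tau)^{\xi-1}\hoat{\beta}{\vvp}^2\,d\tau$ on the right-hand side.

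The remaining work is to estimate the weighted integrals of the nonlinear remainders $R_2$, $R_3$, $R_4$. Here I would reuse the pointwise bound already established in the proof of Lemma \ref{lm-f},
\[
|(R_2,R_3,R_4)|
\le
C|(\psi_x,\chi_x)||\vvp_x||(\vvp_x,\psi_{xx},\chi_{xx})|
+ C\psi_x^2|\vvp_x|^2
+ C\dels\zt^2|\vvp|^2
+ C\dels|(\vvp_x,\psi_{xx},\chi_{xx})|^2
+ C\dels e^{-cx}|\vvp|^2,
\]
multiply by $w$ and integrate. The cubic contributions are handled by the interpolation inequality (\ref{i01}) (using $\ltat{1}{\vvp}\le\tilde{N}_\beta(t)$, valid because $\beta\ge 1$), while the exponentially localized term is absorbed via the Poincar\'e type inequality (\ref{poin}) together with the bound (\ref{zt-1}) on $\zt$. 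This produces $\int_{\R_+}w\,|(R_2,R_3,R_4)|\,dx \le C(\dels^{-1/2}\tilde{N}_\beta(t)+\dels)\,\tilde{D}_\beta(t)^2$.

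Finally I would integrate the three weighted identities over $(0,t)\times\R_+$, add them, and collect the good terms to obtain (\ref{k01}), the overall factor $\dels^{-\beta}$ canceling as it does in Lemma \ref{lm-j}. The hard part will be this last nonlinear step: one must verify that every remainder can be packaged into $\tilde{D}_\beta(t)^2$ with the small prefactor $\dels^{-1/2}\tilde{N}_\beta(t)+\dels$, which demands careful bookkeeping of the powers of $\dels$ and relies on the degenerate interpolation inequality (\ref{i01}) in place of the plain Sobolev embedding available for the supersonic case.
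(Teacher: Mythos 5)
Your plan follows the paper's route in outline: the same identities (\ref{b05}), (\ref{c02}), (\ref{d02}), a spatial weight equivalent to the paper's (the paper takes $w=(1+\dels x)^\beta$, differing from your $h^2\zt^{-\beta}$ only by the factor $\dels^{-\beta}h^2$, which cancels on both sides), the same boundary-term and Poincar\'e arguments. But it breaks down at precisely the step that is new in this lemma. You assert that since $w_x\le C\dels w$, every term produced by differentiating the weight, in particular $w_x\bigl(\tfrac{\mu}{2}u\vp_x^2-\rho^2\vp_t\psi\bigr)$, ``carries an extra factor $\dels$ and is absorbed into the $\dels$-small part of the dissipation.'' That is false. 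Eliminating $\vp_t$ through (\ref{pt-eq1}) leaves the cross terms $w_x u\vp_x\psi$ and $w_x\rho\psi_x\psi$, which after normalization are of size $\dels(1+\dels x)^{\beta-1}|\vp_x||\psi|$ and $\dels(1+\dels x)^{\beta-1}|\psi_x||\psi|$. To dominate these by $\dels\,\tilde{D}_\beta(t)^2$ you would need to control $\dels\ltat{\beta}{\psi}^2$, but $\ltat{\beta}{\psi}^2$ is not contained in, nor controlled by, $\tilde{D}_\beta(t)^2$: the only zeroth-order dissipation available is the degenerate one, $\dels^2\ltat{\beta-2}{\vvp}^2$, and $(1+\dels x)^2/\dels$ is unbounded, while the Poincar\'e inequality (\ref{poin}) is of no help because $(1+\dels x)^\beta\notin L^1_1(\R_+)$. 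What actually works --- and this is the paper's key estimate (\ref{k03}) --- is the asymmetric weighted Young inequality, splitting the weight power $\beta-1$ as $\tfrac{\beta}{2}+(\tfrac{\beta}{2}-1)$, which yields $\ep\ltat{\beta}{\vp_x}^2+C_\ep\bigl(\dels^2\ltat{\beta-2}{\vvp}^2+\ltat{\beta}{\psi_x}^2\bigr)+C\dels\tilde{D}_\beta(t)^2$. The $\ep$-term is absorbed by the main dissipation $w\,p\,\vp_x^2$, but the $C_\ep$-terms carry a \emph{large} constant and are not small in any sense; the estimate can only be closed by adding a sufficiently large multiple of the weighted $L^2$ estimate (\ref{j01}) of Lemma \ref{lm-j}, whose left-hand side supplies exactly $\dels^2\ltat{\beta-2}{\vvp}^2+\ltat{\beta}{(\psi_x,\chi_x)}^2$. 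The same mechanism is needed for the $C_\ep|\vvp_x|^2$ terms you create when absorbing $G_2$, $G_3$, $G_4$. Your proposal never invokes (\ref{j01}) in this quantitative way, and with the absorption as you state it the argument collapses.

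A smaller point: your use of the interpolation inequality (\ref{i01}) for the cubic remainder terms is misplaced. Those terms, $|(\psi_x,\chi_x)||\vvp_x||(\vvp_x,\psi_{xx},\chi_{xx})|$, are cubic in \emph{derivatives}; applying (\ref{i01}) to components of $\vvp_x$ would require $\ltat{\beta}{\vp_{xx}}$, which no available quantity controls (neither $\tilde{N}_\beta$ nor $\tilde{D}_\beta$ contains second derivatives of $\vp$). The paper instead proves (\ref{k04}) by extracting $\li{(\psi_x,\chi_x)}\le C\bigl(\lt{\vvp_x}+\lt{(\psi_{xx},\chi_{xx})}\bigr)$ via the Sobolev embedding and pairing it against the weighted norms, obtaining the prefactor $\tilde{N}_\beta(t)+\dels$; the inequality (\ref{i01}) is needed only in Lemma \ref{lm-j}, where the cubic term $\zt^{-\beta+1}|\vvp|^3$ involves $\vvp$ itself.
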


\begin{proof}
We only show the estimate for $\vp_x$ as 
the other estimates for $(\psi_x,\chi_x)$ 
can be established by similar computations.
Multiplying (\ref{b05}) by a spatial weight function
$w := (1+\dels x)^\beta$,
we get
\begin{gather}
\Bigl\{
w \Bigl(
\frac{\mu}{2} \vp_x^2
+ \rho^2 \vp_x \psi
\Bigr)
\Bigr\}_t
+
\Bigl\{
w \Bigl(
\frac{\mu}{2} u \vp_x^2
- \rho^2 \vp_t \psi
\Bigr)
\Bigr\}_x
+ w p \vp_x^2
\nonumber
\\
=
w G_2
+ w R_2
+ w_x \Bigl(
\frac{\mu}{2} u \vp_x^2 - \rho^2 \vp_t \psi
\Bigr).
\label{k02}
\end{gather}
The last term on the right-hand side of (\ref{k02})
is estimated  as
\begin{equation}
\int_{\R_+} w_x \Bigl|
\frac{\mu}{2} u \vp_x^2 - \rho^2 \vp_t \psi
\Bigr| \, dx
\le
\ep \ltat{\beta}{\vp_x}^2
+ C_\ep \bigl(
\dels^2 \ltat{\beta-2}{\vvp}^2
+ \ltat{\beta}{\psi_x}^2
\bigr)
+ C \dels \tilde{D}_\beta(t)^2,
\label{k03}
\end{equation}
where $\ep > 0$ is
 an arbitrary constant and 
$C_\ep$ is a positive constant depending on $\ep$.
The other terms in (\ref{k02}) are estimated in a same 
way as the proof of Lemma \ref{lm-b}.
For instance, 
the remaining term $R_2$ verifies the estimate
\begin{equation}
\int_{\R_+} w
 |(R_2,R_3,R_4)| \, dx
\le
C (\tilde{N}_\beta(t) + \dels) \tilde{D}_\beta(t)^2,
\label{k04}
\end{equation}
which follows from the Poincar\'e type inequality (\ref{poin})
and the inequality
\begin{align*}
\int_{\R_+} (1+\dels x)^\beta |(\psi_x,\chi_x)| |\vvp_x|
|(\vvp_x,\psi_{xx},\chi_{xx})| \, dx
& \le
\li{(\psi_x,\chi_x)} \ltat{\beta}{\vvp_x}
  \ltat{\beta}{(\vvp_x,\psi_{xx},\chi_{xx})}
\\
& \le
C \tilde{N}_\beta(t) 
  \ltat{\beta}{(\vvp_x,\psi_{xx},\chi_{xx})}^2.
\end{align*}
Therefore,
integrating (\ref{k02}) over $(0,t) \times \R_+$, substituting
(\ref{b07}), (\ref{k03}) and (\ref{k04}) in the resultant
and then letting $\ep$ sufficiently small
with using (\ref{j01}),
we obtain the estimate for $\vp_x$ in \eqref{k01}.
The estimates for $(\psi_x,\chi_x)$ are obtained by similar
computations to Lemma \ref{lm-c} and \ref{lm-d} with
using the estimate (\ref{k04}).
Thus we complete the proof of the desired estimate (\ref{k01}).
\end{proof}

\begin{proof}%
[\underline{Proofs of Proposition {\rm \ref{pro-4}} 
 and Theorem {\rm \ref{th-cv} - (ii)}}]
Summing up the estimates (\ref{j01}) and (\ref{k01}),
and letting $\dels^{-1/2} \tilde{N}_\beta(t) + \dels$ be suitably small,
we have
\begin{align*}
&
(1+t)^\xi \hoat{\beta}{\vvp(t)}^2
+
\int_0^t (1+\tau)^\xi 
\bigl(
\dels^2 \ltat{\beta-2}{\vvp(\tau)}^2
+ \tilde{D}_{\beta}(\tau)^2
\bigr)
 \, d \tau
\\
&
\mspace{20mu}
\le
C \hoat{\beta}{\vvp_0}^2
+ C \xi \int_0^t (1+\tau)^{\xi-1}
\bigl(
\ltat{\beta}{\vvp(\tau)}^2
+ \tilde{D}_{\beta}(\tau)^2
\bigr)
\, d \tau,
\end{align*}
which yields the desired estimates (\ref{apri-4a}) and (\ref{apri-4b})
by an induction with respect to $\beta$ and $\xi$ (see \cite{{ns-98}}).
The convergence rate (\ref{conv-tr})   follows from
the estimate (\ref{apri-4b}) with the aid of the estimate 
$\dels^\alpha \hoa{\alpha}{\vvp}^2 \le \hoat{\alpha}{\vvp}^2
 \le \hoa{\alpha}{\vvp}^2$.
We consequently  complete the proofs.
\end{proof}

%% references

{\small 

%\bibliographystyle{siam}
%\bibliography{refs}

} % end of small

\end{document}